\documentclass[a4paper,10pt]{amsart}

\usepackage[T1]{fontenc}
\usepackage{lmodern}
\usepackage[utf8]{inputenc}
\usepackage[english]{babel}
\usepackage{geometry}

\usepackage{booktabs}
\usepackage{color}
\usepackage{amsthm}
\usepackage{amsmath,amssymb}
\usepackage{graphicx}
\usepackage{listings}
\usepackage{emptypage}
\usepackage{newlfont}
\usepackage{verbatim}
\usepackage{amssymb}
\usepackage{array}
\usepackage{mathtools}
\usepackage{afterpage}
\usepackage{csquotes}
\usepackage{tabularx}
\usepackage{xcolor}
\usepackage{soul}
\usepackage{etoolbox}
\usepackage{hyperref}

\newcommand{\numberset}{\mathbb}

\newcommand{\R}{\numberset{R}}

\newcommand{\ud}{\mathrm{d}}

\theoremstyle{plain} 
\newtheorem{thm}{Theorem} 
\newtheorem{cor}[thm]{Corollary}
  
\newtheorem{prop}[thm]{Proposition}
\newtheorem*{theorem*}{Teorema}

\theoremstyle{definition} 
\newtheorem{defn}[thm]{Definition}

\newtheorem{rem}[thm]{Remark}

\usepackage{setspace}
\usepackage{fancyhdr}
\usepackage{pdfpages}
\usepackage{standalone}
\usepackage{appendix}

\numberwithin{equation}{section}
\numberwithin{thm}{section}

\title[Expansive solutions]{\textbf{Expansive solutions and the boundary at infinity for the homogeneous $N$-body problem}}
\date{}

\subjclass{70F10 70H20 70G75 49L25}
\keywords{homogeneous $N$-body problem; expansive solutions; asymptotic growth; boundary at infinity; Hamilton-Jacobi equations.}

\thanks{All authors are affiliated to INDAM-GNAMPA research group. 
D. B. was supported by MUR - M4C2 1.5 of PNRR with grant no. ECS00000036.}

\author{Diego Berti}
\address{Dipartimento di Matematica “Giuseppe Peano”\\Università degli Studi di Torino}
\email{diego.berti@unito.it}

\author{Davide Polimeni}
\address{Dipartimento di Matematica “Giuseppe Peano”\\Università degli Studi di Torino}
\email{davide.polimeni@unito.it}

\author{Susanna Terracini}
\address{Dipartimento di Matematica “Giuseppe Peano”\\Università degli Studi di Torino}
\email{susanna.terracini@unito.it}

\begin{document}

\begin{abstract}
We investigate expansive solutions of the $N$-body problem in $\mathbb{R}^d$ ($d \ge 2$) driven by homogeneous Newtonian potentials of degree $-\alpha$. We establish the existence of half-entire expansive motions with prescribed initial configuration and asymptotic direction for a wide range of homogeneity exponents $\alpha$. Our approach is variational and relies on the minimization of a suitably  renormalized Lagrangian action, allowing us to treat in a unified framework the hyperbolic, parabolic, and hyperbolic-parabolic regimes in the sense of Chazy’s classification.

Beyond existence, we derive refined asymptotic expansions for all classes of expansive solutions, identifying higher-order correction terms and improving previously known growth estimates, including the classical Newtonian case $\alpha=1$. In particular, for hyperbolic-parabolic solutions,  we provide a detailed description of the interplay between linear escape of cluster centers and internal parabolic dynamics, extending the cluster scattering picture to general homogeneous potentials.

Finally, we interpret these solutions within the geometric framework of the Jacobi-Maupertuis metric and the weak KAM theory. In this perspective, expansive motions correspond to geodesic rays and calibrating curves for the associated Hamilton-Jacobi equation, yielding a dynamical characterization of the boundary at infinity and a refined description of global viscosity solutions.
\end{abstract}

\maketitle


\section{Introduction and main results}

In this paper, we study \emph{expansive} solutions of the $N$-body problem in the Euclidean space $\mathbb{R}^{d}$, with $d\ge 2$, driven by a homogeneous Newtonian potential of degree $-\alpha$, where $\alpha>0$ will be specified and restricted according to the dynamical regime under consideration. The potential is
\[
U^\alpha(x)=\sum_{i<j}\frac{m_i m_j}{|r_i-r_j|^\alpha},
\]
where $m_1,\dots,m_N>0$ and $x=(r_1,\dots,r_N)\in\mathbb{R}^{dN}$ is a configuration with $r_i\neq r_j$ for all $i\neq j$.

If $r_1,\dots,r_N\in\mathbb{R}^d$ are the positions of $N$ point masses $m_1,\dots,m_N$, Newton's equations associated with $U^\alpha$ read
\begin{equation}\label{eq:newton}
m_i\ddot r_i
=
-\alpha\sum_{\substack{j\neq i}} m_i m_j \frac{r_i-r_j}{|r_i-r_j|^{2+\alpha}},
\qquad i=1,\dots,N,
\end{equation}
or, in compact form,
\begin{equation}\label{eq:NBP}
\mathcal{M}\ddot x=\nabla U^\alpha(x),
\end{equation}
where $\mathcal{M}=\mathrm{diag}(m_1 I_d,\dots,m_N I_d)$ is the diagonal mass matrix and $\nabla$ denotes the Euclidean gradient in $\mathbb{R}^{dN}$.

Since \eqref{eq:NBP} is translation invariant, we fix the origin of the inertial frame at the center of mass and work on the (linear) configuration space with zero barycenter,
\[
\mathcal{X}
=
\left\{
x=(r_1,\ldots,r_N)\in\mathbb{R}^{dN}:\ \sum_{i=1}^N m_i r_i=0
\right\}.
\]
It is convenient to introduce the collision-free set
\[
\Omega
=
\left\{
x=(r_1,\dots,r_N)\in\mathcal{X}:\ r_i\neq r_j\ \text{for all}\ i\neq j
\right\},
\]
and its complement $\Delta=\mathcal{X}\setminus\Omega$, the collision set.

We adopt the classical variational viewpoint for singular Lagrangians and consider the Lagrangian $L:T\Omega\to\mathbb{R}\cup\{+\infty\}$ and Hamiltonian $H:T^*\Omega\to\mathbb{R}\cup\{-\infty\}$,
\[
L(x,v)=\frac12\|v\|_{\mathcal{M}}^2+U^\alpha(x),
\qquad
H(x,p)=\frac12\|p\|_{\mathcal{M}^{-1}}^2-U^\alpha(x),
\]
where the norm is induced by the mass inner product
\[
\langle x,y\rangle_{\mathcal{M}}
=
\sum_{i=1}^N m_i\langle r_i,s_i\rangle,
\qquad
x=(r_1,\ldots,r_N),\ y=(s_1,\ldots,s_N)\in\mathcal{X},
\]
and $\langle\cdot,\cdot\rangle$ is the standard scalar product in $\mathbb{R}^d$.

\medskip

In the classical Newtonian case $\alpha=1$, Polimeni and Terracini \cite{PolimeniTerracini} proved in 2024 the existence of (half-entire) expansive solutions in $\mathbb{R}^d$, $d\ge 2$, with prescribed initial configuration and prescribed asymptotic direction. Their strategy introduces a unified global variational framework based on the minimization of a \emph{renormalized} Lagrangian action, and yields the existence of all three Chazy types of expansive motions -- hyperbolic, parabolic, and hyperbolic-parabolic -- in the sense of the classical classification of Chazy \cite{Chazy}. This approach generalizes earlier work by Maderna and Venturelli, who established existence results for parabolic \cite{MadernaVenturelli_GloballyMinimizingParabolic} and hyperbolic \cite{MadernaVenturelli_HyperbolicMotions} motions by different methods.

The aim of the present paper is twofold. First, we extend the existence theory of \cite{PolimeniTerracini} to the $N$-body problem with general homogeneous potentials of degree $-\alpha$. Second, we provide sharper asymptotic expansions for the resulting expansive motions, refining the leading-order regimes (and the lower-order correction terms) as $t\to+\infty$.

\medskip

Despite its deceptively simple formulation, a complete understanding of the dynamics of the general $N$-body problem remains out of reach. A fundamental obstruction is the non-integrability for $N\ge 3$, a phenomenon already anticipated by Poincar\'e \cite{Poincare_SurLeProblemes}. Another major difficulty is the possible occurrence of finite-time singularities: a motion develops a singularity at time $t^*$ if it cannot be extended beyond $t=t^*$. Such singularities are typically associated with collisions, i.e.\ configurations in which some mutual distances vanish. In contrast, we shall deal with expanding solutions.

\begin{defn}\label{def:expansive}
A motion $\gamma:[t_0,+\infty)\to\mathcal{X}$ is said to be \emph{expansive} if all mutual distances diverge, namely
\[
|r_i(t)-r_j(t)|\longrightarrow+\infty
\qquad\text{as }t\to+\infty,\ \text{for all }i<j.
\]
Equivalently, $\gamma$ is expansive if $U^\alpha(\gamma(t))\to 0$ as $t\to+\infty$.
\end{defn}

By energy conservation, expansive motions can occur only at nonnegative energy levels.

To quantify the asymptotic separation, we set
\[
r(t)=\min_{i<j}|r_i(t)-r_j(t)|,
\qquad
R(t)=\max_{i<j}|r_i(t)-r_j(t)|.
\]
For positive functions $f$ and $g$, we write $f\approx g$ if there exist constants $0<c\le C$ such that
$c\le f/g\le C$.

In the Newtonian case ($\alpha=1$), the following classical results from the 1960s--1970s provide a fundamental description of the final evolution of expansive motions.

\begin{thm}\label{PollardTheorem}
\begin{itemize}
\item[$(i)$] (Pollard, 1967 \cite{Pollard_BehaviorOfGravitationalSystems})
Let $\gamma$ be a motion defined for all $t>t_0$. If $r(t)$ is bounded away from $0$, then $R(t)=O(t)$ as $t\to+\infty$. Moreover, $R(t)/t\to+\infty$ if and only if $r(t)\to 0$.

\item[$(ii)$] (Marchal--Saari, 1976 \cite{MarchalSaari_FinalEvolution})
Let $\gamma$ be a motion defined for all $t>t_0$. Then either $R(t)/t\to+\infty$ and $r(t)\to 0$, or there exists a configuration $a\in\mathcal{X}$ such that
\[
\gamma(t)=at+O(t^{2/3})
\qquad\text{as }t\to+\infty.
\]
In particular, for \emph{superhyperbolic} motions -- i.e.\ motions such that $\limsup_{t\to+\infty}R(t)/t=+\infty$ -- the quotient $R(t)/t$ diverges.

\item[$(iii)$] (Marchal--Saari, 1976 \cite{MarchalSaari_FinalEvolution})
Assume $\gamma(t)=at+O(t^{2/3})$ for some $a\in\mathcal{X}$, and that $\gamma$ is expansive. Then for each pair $i<j$ such that $a_i=a_j$, one has $|r_i(t)-r_j(t)|\approx t^{2/3}$.
\end{itemize}
\end{thm}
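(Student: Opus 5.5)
The three items are classical, so I would not try to prove them from scratch. I would reconstruct the standard arguments, which rest on the Lagrange--Jacobi identity and on energy conservation. Let $I=\|\gamma\|_{\mathcal M}^2$ be the moment of inertia and $h$ the energy. For $\alpha=1$ the Lagrange--Jacobi identity reads $\ddot I = 4h+2U$.

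\emph{Part (i).} Suppose $r(t)\ge r_0>0$. Then $U$ is bounded, so $\ddot I$ is bounded and $I=O(t^2)$. Since $R^2\lesssim I$ in barycentric coordinates, this gives $R=O(t)$.

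For the equivalence, first assume $r(t)\to0$ along the whole tail. Then $U\to\infty$, hence $\ddot I\to\infty$ and $I/t^2\to\infty$. Pollard's refinement, a Sundman-type inequality $I\ddot I-\tfrac12\dot I^2\ge \ldots$, upgrades this to $R/t\to\infty$. Conversely, suppose $R/t\to\infty$ but $r$ does not tend to $0$. Pick a subsequence along which $r$ is bounded below. The cluster decomposition then forces some cluster's internal energy to be very negative, which in turn forces close approaches; this contradicts the choice of subsequence. This converse is the delicate part of Pollard's argument, and I would follow his original proof.

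\emph{Part (ii).} I would use the Marchal--Saari cluster argument. Assume $R/t\not\to\infty$. By (i) and a contradiction argument, $r$ stays bounded below on a tail, so $R=O(t)$.

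Next, group the bodies by which mutual distances grow linearly. Between distinct groups the pairwise forces are $O(t^{-2})$. This makes the group centres of mass move as $a t+O(\log t)$.

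Within a group the relative distances are $o(t)$. Using the virial identity for each subsystem, whose internal energy is bounded, one bounds the internal size by $O(t^{2/3})$. This is the Kepler parabolic rate, $\ddot\rho\sim\rho^{-2}$ giving $\rho\sim t^{2/3}$. Since $t^{2/3}$ dominates $\log t$, the error term is $O(t^{2/3})$.

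The second claim of (ii) then follows directly. If $\limsup R/t=\infty$, the second alternative is impossible, so the first holds and $R/t\to\infty$.

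\emph{Part (iii).} For pairs with $a_i=a_j$, the bound $|r_i-r_j|=O(t^{2/3})$ is immediate from (ii). For the lower bound, apply Lagrange--Jacobi to the subcluster. Expansiveness means the cluster's potential tends to zero, so its internal energy is $\ge 0$ asymptotically, while its linear velocity contribution vanishes. A comparison argument with the parabolic Kepler rate, using the centred $I$ of the cluster and the inequality $\ddot I_{\text{cl}}\ge cU_{\text{cl}}\gtrsim I_{\text{cl}}^{-1/2}$, then yields $I_{\text{cl}}\gtrsim t^{4/3}$. Finally I would transfer this bound from the cluster's moment of inertia to each individual pair via nested subclusters.

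The main obstacle is the intra-cluster $t^{2/3}$ estimate in (ii), since its interaction errors are coupled with those of (iii). Rather than re-derive it, I would cite \cite{MarchalSaari_FinalEvolution}.
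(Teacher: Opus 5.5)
The paper does not prove this theorem; it quotes it from Pollard and Marchal--Saari, so deferring to those sources is exactly what the paper does. Read as an argument, however, your reconstruction contains a step that fails. In (ii) you say that if $R/t\not\to\infty$ then, ``by (i) and a contradiction argument'', $r$ is bounded below on a tail, and you then apply the first half of (i) to get $R=O(t)$. The only relevant information in (i) is the equivalence $R/t\to\infty\iff r\to0$. So from $R/t\not\to\infty$ you learn only $r\not\to0$, i.e.\ $\limsup_{t\to+\infty}r(t)>0$, which is compatible with $\liminf_{t\to+\infty}r(t)=0$. Such motions necessarily fall into the second alternative of (ii), and nothing excludes them: bounded quasi-periodic almost-collision orbits of exactly this kind are known in the spatial three-body problem. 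For them the hypothesis of the first half of (i) is simply unavailable. Producing linear growth and the limit velocity $a$ with no lower bound on $r$ is a central part of the Marchal--Saari theorem, and your outline bypasses it. Two related problems: grouping the bodies ``by which mutual distances grow linearly'' presupposes that every $|r_i-r_j|/t$ converges, i.e.\ essentially that $a=\lim_{t\to+\infty}\gamma(t)/t$ exists, which is part of the conclusion. And the Lagrange--Jacobi identity $\ddot I_K=4h_K+2U_K+(\text{tidal terms})$ for a group gives lower, not upper, bounds on its growth, because $2U_K$ is large during close encounters inside the group. It therefore cannot by itself deliver the $O(t^{2/3})$ bound.

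In (iii), your comparison $\ddot I_{\mathrm{cl}}\gtrsim U_{\mathrm{cl}}\gtrsim I_{\mathrm{cl}}^{-1/2}\Rightarrow I_{\mathrm{cl}}\gtrsim t^{4/3}$ is sound. The tidal terms and the drift of the internal energy are $o(t^{-2/3})$, hence negligible against $U_{\mathrm{cl}}$. But this bounds from below only the \emph{diameter} of the $a$-cluster, whereas $|r_i-r_j|\approx t^{2/3}$ is required for every pair in it. Passing to pairs ``via nested subclusters'' would need a splitting of the cluster into subclusters that stay mutually $\gtrsim t^{2/3}$ apart for all large $t$. An $O(t^{2/3})$ bound provides no such time-uniform hierarchy: a pair might come within $o(t^{2/3})$ only along a sequence of times, and the partition can change with $t$. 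What is needed is genuine control of the internal shape of the cluster, not just of its size, and a moment-of-inertia bound does not provide it. Two smaller points on (i). No Sundman-type inequality is needed: in barycentric coordinates $I\le CR^2$, so $I/t^2\to\infty$ already gives $R/t\to\infty$. And your sketch of the converse yields a contradiction only if the very negative cluster energy occurs at the chosen times $t_n$ with $r(t_n)\ge c$, which nothing in it guarantees. That direction therefore rests entirely on Pollard's paper.
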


As a consequence, expansive motions cannot be superhyperbolic. Hence, any expansive motion satisfies $\gamma(t)=at+O(t^{2/3})$ for some asymptotic velocity $a\in\mathcal{X}$. Chazy \cite{Chazy} provided a classification of expansive motions (with zero barycenter) according to the asymptotic growth of mutual distances:
\begin{itemize}
\item[$(H)$] \emph{Hyperbolic}: $a\in\Omega$ and $|r_i(t)-r_j(t)|\approx t$ for all $i<j$;
\item[$(P)$] \emph{Parabolic} (completely parabolic): $a=0$ and $|r_i(t)-r_j(t)|\approx t^{2/3}$ for all $i<j$;
\item[$(HP)$] \emph{Hyperbolic-parabolic} (partially hyperbolic): $a\in\Delta$, $a\neq 0$.
\end{itemize}

To refine this asymptotic classification, we recall the notion of \emph{limit shape}. Since we work in $\mathcal{X}$ (zero barycenter), similarity transformations reduce to scalings and orthogonal maps. We say that $\gamma$ has a limit shape if there exists a family of similarities $S(t)$ of $\mathbb{R}^d$ such that $S(t)\gamma(t)\to a\neq 0$ as $t\to+\infty$.

The possible limit shapes associated with expansive motions are summarized as follows:
\begin{itemize}
\item in the hyperbolic case, the limit shape is a collision-free configuration $a\in\Omega$, and coincides with the asymptotic velocity $a=\lim_{t\rightarrow+\infty}\gamma(t)/t$;
\item in the parabolic case, the limit shape is a central configuration $b$, i.e.\ a critical point of $U$ restricted to the inertia ellipsoid
$\mathcal{E}=\{x\in\mathcal{X}:\|x\|_{\mathcal M}^2=1\}$;
\item in the hyperbolic-parabolic case, the limit shape again coincides with the asymptotic velocity, which now lies in the collision set $a\in\Delta$.
\end{itemize}

Working with the Newtonian potential of degree $-1$, Polimeni and Terracini \cite{PolimeniTerracini} extended, refined and unified earlier results by Maderna and Venturelli \cite{MadernaVenturelli_GloballyMinimizingParabolic,MadernaVenturelli_HyperbolicMotions}, proving the existence of half-entire expansive solutions in all three Chazy regimes $(H)$, $(P)$ and $(HP)$ via a single renormalized action-minimization scheme.

Their first result concerns half-entire hyperbolic solutions.

\begin{thm}[Maderna and Venturelli, 2020 \cite{MadernaVenturelli_HyperbolicMotions}]\label{thm_hyperbolic}
Given $d\ge 2$, for the Newtonian $N$-body problem in $\mathbb{R}^d$, there exists a hyperbolic solution $\gamma(t)$ defined for $t\in[1,+\infty)$ of the form
\begin{equation}\label{e:MV}
\gamma(t)=at-\log(t)\,\nabla^{\mathcal{M}}U(a)+O(1),
\qquad\text{as }t\to+\infty,
\end{equation}
for any initial condition $x=\gamma(1)\in\mathcal{X}$ and any collision-free configuration $a\in\Omega$.
\end{thm}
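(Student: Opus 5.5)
We look for the solution as a perturbation of the asymptotic reference path and obtain the perturbation by minimizing a renormalized action. Set $\gamma_0(t)=at-\log(t)\,\nabla^{\mathcal M}U(a)$ for $t\ge1$, where $\nabla^{\mathcal M}U=\mathcal M^{-1}\nabla U$. We seek $\gamma=\gamma_0+\varphi$ with $\varphi(1)=x-\gamma_0(1)=x$ and $\varphi$ in the space $\mathcal D$ of curves with $\dot\varphi\in L^2(1,+\infty)$. The aim is to show $\varphi$ bounded.

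Since $L(\gamma,\dot\gamma)$ is not integrable on $[1,\infty)$ ($\|\dot\gamma\|^2\to\|a\|^2$ and $U(\gamma)\sim U(a)/t$), we renormalize:
\[
\mathcal A(\varphi)=\int_1^{+\infty}\Bigl(\tfrac12\|\dot\varphi\|_{\mathcal M}^2+U(\gamma_0+\varphi)-U(\gamma_0)-\langle\nabla U(\gamma_0),\varphi\rangle+\langle\ddot\gamma_0,\varphi\rangle_{\mathcal M}\Bigr)\,\ud t .
\]
This is $\int L(\gamma,\dot\gamma)-L(\gamma_0,\dot\gamma_0)$ after integrating the cross term $\langle\dot\gamma_0,\dot\varphi\rangle_{\mathcal M}$ by parts, dropping the boundary contribution, which is constant in the admissible class. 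Its Euler--Lagrange equation is exactly $\mathcal M\ddot\gamma=\nabla U(\gamma)$.

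The logarithmic term is chosen so that the forcing $f(t)=\nabla U(\gamma_0)-\mathcal M\ddot\gamma_0$ is small. One has $\mathcal M\ddot\gamma_0=\nabla U(a)/t^2$ and, by $(-\alpha-1)$-homogeneity of $\nabla U$ with $\alpha=1$,
\[
\nabla U(\gamma_0)=t^{-2}\nabla U\bigl(a-\tfrac{\log t}{t}\nabla^{\mathcal M}U(a)\bigr)=t^{-2}\nabla U(a)+O\bigl(t^{-3}\log t\bigr).
\]
Hence $f=O(t^{-3}\log t)$, and the linear term $\int\langle f,\varphi\rangle$ is controlled through the Hardy-type bound $|\varphi(t)|\le|x|+\sqrt t\,\|\dot\varphi\|_{L^2}$. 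The remaining quadratic part $U(\gamma_0+\varphi)-U(\gamma_0)-\langle\nabla U(\gamma_0),\varphi\rangle$ is nonnegative-ish up to errors of order $|\varphi|^2/t^3$ wherever $\gamma$ stays away from $\Delta$. Since $U\ge0$, it is also bounded below in general, because $U(\gamma_0+\varphi)\ge0$ and the subtracted terms are integrable against $\varphi$ of growth $\sqrt t$. These bounds give coercivity and weak lower semicontinuity in $\mathcal D$, and hence a minimizer.

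The next step is to exclude collisions of the minimizer, which is the main obstacle. Isolated collisions have finite action, since $\int U<\infty$ near a Newtonian collision, so the direct method does not rule them out. I would invoke Marchal's averaging principle, or the blow-up and local deformation arguments of Ferrario--Terracini, valid for $d\ge2$. These show that a minimizer with fixed endpoints on compact subintervals has no interior collisions. The minimizer restricted to any $[1,T]$ is a fixed-endpoint minimizer, because the renormalization terms are linear in $\varphi$. It follows that $\gamma$ is a classical solution on $(1,\infty)$ with $\gamma(1)=x$; the possibility $x\in\Delta$ only affects the endpoint.

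Finally we show $\varphi=O(1)$. First, $\|\dot\varphi\|_{L^2(T,\infty)}\to0$ gives $\varphi=o(\sqrt t)$. Then $\gamma=at+o(t)$ stays in a cone around $a\in\Omega$, so $\varphi$ solves
\[
\mathcal M\ddot\varphi=f+\bigl(\nabla U(\gamma_0+\varphi)-\nabla U(\gamma_0)\bigr)=O\bigl(t^{-3}\log t\bigr)+O\bigl(t^{-3}|\varphi|\bigr).
\]
Because $\dot\varphi\in L^2$ forces $\dot\varphi(t)\to0$, we can integrate twice from infinity: $\dot\varphi(t)=-\int_t^\infty\ddot\varphi$. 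A Gronwall/bootstrap argument then improves $\varphi=o(\sqrt t)$ to $\dot\varphi=O(t^{-3/2})$, which is integrable, and hence $\varphi=O(1)$. This yields \eqref{e:MV}.
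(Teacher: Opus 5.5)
Your architecture is the one the paper uses, following Polimeni--Terracini: minimize a renormalized action over $\mathcal D_0^{1,2}$, exclude collisions with Marchal's principle, and read the expansion off the equation for $\varphi$. Two of your steps are sound. Deriving fixed-time, fixed-endpoint minimality on each $[1,T]$ from compactly supported variations is a legitimate shortcut, since the paper's free-time minimality is not needed for this statement. Your final bootstrap is also correct, and it even gives $\varphi=Q+O(t^{-1}\log t)$.

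The displayed functional, however, has a sign slip. Integrating $\langle\dot\gamma_0,\dot\varphi\rangle_{\mathcal M}$ by parts produces $-\langle\ddot\gamma_0,\varphi\rangle_{\mathcal M}$ and no $-\langle\nabla U(\gamma_0),\varphi\rangle$. So the integrand must be $\tfrac12\|\dot\varphi\|_{\mathcal M}^2+\bigl[U(\gamma_0+\varphi)-U(\gamma_0)-\langle\nabla U(\gamma_0),\varphi\rangle\bigr]+\langle f,\varphi\rangle$. Your version has Euler--Lagrange equation $\mathcal M\ddot\gamma=\nabla U(\gamma)-\nabla U(\gamma_0)+2\mathcal M\ddot\gamma_0$, and the restriction argument on $[1,T]$ then fails too. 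Your later equation for $\varphi$ is the correct one. Two smaller points: $\varphi(1)=x-a$, not $x$; and the boundary term $\langle\dot\gamma_0(T),\varphi(T)\rangle_{\mathcal M}$ is not constant on the admissible class, though only compactly supported variations are needed.

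The genuine gap is coercivity.
\begin{itemize}
\item Since $U(\gamma_0)\sim U(a)/t$ is not integrable on $[1,+\infty)$, the reasoning ``$U(\gamma_0+\varphi)\ge0$ and the subtracted terms are integrable'' gives no lower bound.
\item The Taylor bound $O(|\varphi|^2/t^3)$ holds only where $\gamma_0+s\varphi$ stays in a cone around $a$. Used on the whole half-line, it gives via Hardy a loss of order $\|\nabla^2U(a)\|\,\|\varphi\|_{\mathcal D}^2$. This is not absorbed by $\tfrac12\|\varphi\|_{\mathcal D}^2$ when $|a|$ is small, which is exactly the obstruction the paper meets for $\alpha\le1/2$ and removes by scaling.
\end{itemize}
What is missing is a splitting time depending on $\varphi$, say $T\simeq C(1+\|\varphi\|_{\mathcal D}^2)$.
\begin{itemize}
\item For $t\ge T$, the bound $|\varphi(t)|\le|x-a|+\|\varphi\|_{\mathcal D}\sqrt t$ keeps the segment in the cone. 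The remainder then costs $\lesssim\|\varphi\|_{\mathcal D}^2\int_T^{+\infty}t^{-2}\,\ud t=O(1)$.
\item On $[1,T]$, drop $U(\gamma)\ge0$; you lose only the subtracted term, which is $\lesssim\log T$.
\item The linear terms cost $O(\|\varphi\|_{\mathcal D})$.
\end{itemize}
This yields $\mathcal A(\varphi)\ge\tfrac12\|\varphi\|_{\mathcal D}^2-C\|\varphi\|_{\mathcal D}-C\log(2+\|\varphi\|_{\mathcal D})-C$. That is the $\alpha=1$ analogue of the paper's pairwise bound $-C\|\varphi\|_{\mathcal D}^{2(1-\alpha)}$. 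Weak lower semicontinuity, which you only assert, needs the same splitting: Fatou on $[1,\bar T]$, and dominated convergence on the tail via the uniform bound $|\varphi^n(t)|\le k\sqrt t$.

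Finally, your reference path can itself collide. The component $(\gamma_0)_{ij}(t)=a_{ij}t-\log t\,(\nabla^{\mathcal M}U(a))_{ij}$ vanishes at some $t>1$ as soon as $(\nabla^{\mathcal M}U(a))_{ij}=c\,a_{ij}$ with $c\ge e$. This happens, for instance, for a collinear pair tidally stretched by a nearby third body, after rescaling $a\mapsto\lambda a$ with small $\lambda$, because $\nabla^{\mathcal M}U(\lambda a)=\lambda^{-2}\nabla^{\mathcal M}U(a)$. In that case $U(\gamma_0)$ is not locally integrable and your functional is undefined.

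The fix is cosmetic: the subtracted term does not depend on $\varphi$, so subtract $U(at)$ instead. It does show why the paper simply takes $r_0=at$ at $\alpha=1$. Then $\ddot r_0=0$ and there is no linear term. The logarithm is recovered afterwards from $t^2\ddot\varphi\to\nabla^{\mathcal M}U(a)$, and the $O(1)$ remainder from $\ddot\varphi=t^{-2}\nabla^{\mathcal M}U(a)+O(t^{-3}\log t)$. Your log-corrected path is the counterpart of the paper's $\Gamma_k$-corrected path for $\alpha\le1/2$; at $\alpha=1$ it only shortens the last step.
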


For the parabolic case, the result of \cite{MadernaVenturelli_GloballyMinimizingParabolic} was sharpened in \cite{PolimeniTerracini}:

\begin{thm}[Maderna and Venturelli, 2009 \cite{MadernaVenturelli_GloballyMinimizingParabolic}; Polimeni and Terracini, 2024 \cite{PolimeniTerracini}]\label{thm_parabolic}
Given $d\ge 2$, for the Newtonian $N$-body problem in $\mathbb{R}^d$, there exists a parabolic solution $\gamma:[1,+\infty)\to\Omega$ of the form
\begin{equation}\label{eq:parabolic_MVPT}
\gamma(t)=\beta b_m t^{2/3}+o(t^{1/3^+}),
\qquad\text{as }t\to+\infty,
\end{equation}
for any initial configuration $x=\gamma(1)\in\mathcal{X}$, any minimal normalized central configuration $b_m$, and $\beta=\sqrt[3]{\frac92\,U(b_m)}$.
\end{thm}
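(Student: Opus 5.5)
The plan is to follow the renormalized-action scheme of \cite{PolimeniTerracini}, specialized to $\alpha=1$ and to the completely parabolic regime. First I would construct a reference curve. The homothetic parabolic motion $\gamma_0(t)=\beta b_m t^{2/3}$ solves \eqref{eq:NBP}. Indeed, $\ddot\gamma_0=-\tfrac29\beta b_m t^{-4/3}$, and since $b_m$ is a normalized central configuration, $\nabla^{\mathcal M}U(b_m)=-U(b_m)b_m$ by homogeneity. Hence $\nabla^{\mathcal M}U(\beta b_m t^{2/3})=-\beta^{-2}t^{-4/3}U(b_m)b_m$, which gives $\beta^3=\tfrac92 U(b_m)$. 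I would then write competitors as $\gamma=\gamma_0+\varphi$ with $\varphi(1)=x-\beta b_m$. The functional to minimize is the renormalized action
\[
\mathcal A(\varphi)=\int_1^{+\infty}\Big(\tfrac12\|\dot\varphi\|_{\mathcal M}^2+U(\gamma_0+\varphi)-U(\gamma_0)-\langle\nabla U(\gamma_0),\varphi\rangle\Big)\,\ud t .
\]
The formal Euler--Lagrange equation of $\mathcal A$ is exactly \eqref{eq:NBP} for $\gamma$, because $\gamma_0$ is itself a solution. I would pose $\mathcal A$ on the space $\mathcal D$ of $\varphi$ with $\dot\varphi\in L^2(1,\infty)$, with the boundary value fixed at $t=1$.

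Second, I would establish coercivity and existence of a minimizer. The key estimate is that the second variation of $U$ at $\gamma_0$ is controlled by a Hardy inequality, $\int \|\varphi\|^2 t^{-2}\,\ud t\le 4\int\|\dot\varphi\|^2\,\ud t$. The Hessian of $U$ along $\gamma_0$ scales like $t^{-2}$ with a constant determined by $D^2U(b_m)$ restricted to the ellipsoid. Minimality of $b_m$ makes the tangential part of the Hessian nonnegative, which is the point where the minimal central configuration is needed. The radial part has the right sign thanks to the factor $\tfrac29$ coming from the Hardy constant. The nonlinear remainder is handled using the convexity-type bound $U(y)\ge U(z)+\langle\nabla U(z),y-z\rangle-C\|y-z\|^2/\|z\|^3$ away from collisions. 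Near collisions there is no problem, since the integrand tends to $+\infty$ there. Bounding $\mathcal A$ from below gives coercivity on $\mathcal D$, and weak lower semicontinuity (Fatou on the potential part) then yields a minimizer.

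Third, I would rule out collisions via Marchal's averaging principle in dimension $d\ge2$, applied to local variations on compact subintervals; this gives that the minimizer is a genuine collision-free solution on $[1,+\infty)$. For the asymptotics, I would note that $\dot\varphi\in L^2$ implies $\|\varphi(t)\|=O(t^{1/2})$ by Cauchy--Schwarz. To reach $o(t^{1/3+\epsilon})$, I would linearize the equation for $\varphi$ as $\ddot\varphi=t^{-2}A\varphi+\text{h.o.t.}$, with $A$ obtained from the Hessian at $b_m$ scaled by $\beta^{-3}$. Solutions of this Euler-type system grow like $t^{\lambda}$, where $\lambda(\lambda-1)=\mu$ for eigenvalues $\mu$ of $A$. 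The finite-action condition excludes the growing exponents $\lambda\ge\tfrac12$. I would then bootstrap the nonlinear terms through the variation of constants formula.

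The main obstacle is this final step. It requires showing that the admissible (decaying or slowly growing) exponents all satisfy $\lambda\le\tfrac13$, which amounts to identifying the radial eigenvalue exactly ($\mu=-\tfrac29$ with roots $\tfrac13,\tfrac23$, where the finite-action condition kills $\tfrac23$) and using minimality to place the tangential eigenvalues so that the roots are $\le\tfrac13$. The loss $t^{0^+}$ in the statement absorbs a possible logarithmic resonance. I would attempt this spectral computation first, via the homogeneity identities for $D^2U(b_m)b_m$.
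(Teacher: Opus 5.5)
Your architecture (homothetic reference path, renormalized action on $\mathcal D^{1,2}$, Hardy, Marchal, Euler-type exponents) matches the paper's proof of Theorem \ref{thm:par_alpha} at $\alpha=1$. However, the coercivity step has a genuine gap.

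\textbf{The gap in coercivity.} Coercivity is about large perturbations, for which $\gamma_0(t)+\psi(t)$ may be far from the ray of $b_m$. There the second variation at $\gamma_0$ is irrelevant, and Taylor expansion along segments can cross collisions: the segment from $z=\gamma_0(t)$ to the collision-free point $-z$ passes through the total collision $0$. So ``near collisions the integrand is $+\infty$'' does not help.

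What you need is $U(y)\ge U(z)+\langle\nabla U(z),y-z\rangle-C\|y-z\|_{\mathcal M}^2/\|z\|_{\mathcal M}^3$ for \emph{every} $y$, and with a specific constant. Hardy absorbs this term only if $4C/\beta^3<\tfrac12$, i.e.\ $C<\tfrac{9}{16}U(b_m)$. On the other hand, testing $y=-z$ (evenness of $U$ and Euler's identity) forces $C\ge\tfrac12U(b_m)$.

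The missing idea is the paper's. Minimality gives $U(y)\ge U(b_m)\|y\|_{\mathcal M}^{-1}$ for all $y$, with equality of values and gradients on the ray. So you may replace $U$ by this radial function and use convexity of $s\mapsto s^{-1/2}$ in $s=\|y\|_{\mathcal M}^2$. This yields the bound with $C=\tfrac12U(b_m)$, i.e.\ the potential term is $\ge-\tfrac19\|\psi\|_{\mathcal M}^2/t^2$, and hence $\mathcal A\ge\tfrac1{18}\|\varphi\|_{\mathcal D}^2-C'\|\varphi\|_{\mathcal D}-C''$. This global inequality, not the tangential Hessian, is where minimality enters the existence part.

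Likewise, ``Fatou on the potential part'' is not enough. Along weakly convergent sequences the renormalized integrand has no fixed integrable minorant, because $\mathcal D^{1,2}\hookrightarrow L^2(\ud t/t^2)$ is not compact. You must first split off a weakly l.s.c.\ quadratic form. The paper uses the Hessian form; with the bound above you can instead apply Fatou to $P+\tfrac19\|\psi\|^2_{\mathcal M}/t^2\ge0$.

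\textbf{The spectral picture, and your asymptotic route.} Your anticipated spectral picture is inverted. By homogeneity $\nabla^2U(b_m)b_m=-2\nabla U(b_m)=2U(b_m)\mathcal Mb_m$. Hence $A=\beta^{-3}\mathcal M^{-1}\nabla^2U(b_m)$ has radial eigenvalue $+\tfrac49$, with exponents $-\tfrac13,\tfrac43$. On $T_{b_m}\mathcal E$ one has $A=-\tfrac29+\tfrac{2}{9U(b_m)}\mathcal M^{-1}\nabla^2\tilde U(b_m)$, with $\tilde U=\|\cdot\|_{\mathcal M}U$. By minimality its eigenvalues are $\ge-\tfrac29$, with equality at least on the infinitesimal rotations $Sb_m$. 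Thus the exponents $\tfrac13,\tfrac23$ belong to tangential modes. Correspondingly, in coercivity it is the tangential Hessian (only $\ge-\tfrac{2}{9t^2}$) that Hardy must absorb; the radial part is positive.

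Since $\lambda_-\le\tfrac13$ still holds, your linearization route is sound and genuinely different from the paper's. From $\|\psi\|=O(t^{1/2})$ the remainder is $O(t^{-5/3})$. Variation of constants then gives $O(t^{1/3}\log t)$, the log coming from $\sigma^{2/3}f\sim\sigma^{-1}$ in the degenerate modes. One more iteration gives $O(t^{1/3})$.

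The paper (Proposition \ref{prop:asymptotic_par}) avoids diagonalization. It uses $\ddot\eta\ge\langle\ddot\psi,\psi\rangle_{\mathcal M}/\eta$ for $\eta=\|\psi\|_{\mathcal M}$, only the one-sided bound $-\tfrac29\eta^2/t^2$ plus errors, and a maximum principle for $\ddot y+\tfrac{2}{9t^2}y$ in $\mathcal D^{1,2}$. Your version needs the exact constant-coefficient structure, but it yields mode-by-mode asymptotics, e.g.\ for the radial component, which the paper treats separately. Your direct fixed-time use of Marchal on compact subintervals is fine.
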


The improvement lies in the sharper remainder estimate: here $o(t^{1/3^+})$ means $o(t^{1/3+\varepsilon})$ for every $\varepsilon>0$, as $t\to+\infty$.

Polimeni and Terracini's variational framework also produced general existence results for hyperbolic-parabolic motions, extending earlier work such as \cite{Burgos_PartiallyHyperbolic}. Given $a\in\Delta\setminus\{0\}$, they consider the $a$-cluster partition induced by
\begin{equation}\label{eq:equivalence_relation}
i\sim j\quad\Longleftrightarrow\quad a_i=a_j,
\end{equation}
and define the associated clustered potentials by summing the internal potentials of each cluster.

\begin{thm}[Polimeni and Terracini, 2024 \cite{PolimeniTerracini}]\label{thm_partially_hyperbolic}
Given $d\ge 2$, for the Newtonian $N$-body problem in $\mathbb{R}^d$, there exists a hyperbolic-parabolic motion $\gamma:[1,+\infty)\to\Omega$ of the form
\begin{equation}\label{eq:hyperbolic_parabolic_PT}
\gamma(t)=at+\beta b_m t^{2/3}+o(t^{1/3^+}),
\qquad\text{as }t\to+\infty,
\end{equation}
for any initial configuration $x=\gamma(1)\in\mathcal{X}$, any collision configuration $a\in\Delta$, any normalized minimal central configuration $b_m$ of the $a$-clustered potential, and any $h>0$.
\end{thm}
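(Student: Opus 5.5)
The plan follows the renormalized-action scheme of \cite{PolimeniTerracini}. We look for the solution as a perturbation $\gamma(t)=at+\beta b_m t^{2/3}+\varphi(t)$ of a reference curve $\gamma_0(t)=at+\beta b_m t^{2/3}$, with $\varphi(1)=x-a-\beta b_m$ fixed and $\varphi$ in a Hilbert space $\mathcal D$, the completion of $C_c^\infty((1,+\infty);\mathcal X)$ under $\int_1^\infty\|\dot\varphi\|_{\mathcal M}^2$. Let $\mathcal K$ be the $a$-cluster partition from \eqref{eq:equivalence_relation}, $U_a$ the clustered potential and $W=U-U_a$ the inter-cluster interaction.

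\textbf{Step 1: the reference curve.} Check that $\gamma_0$ nearly solves the equation. Since $b_m$ is a central configuration of $U_a$ and $\beta^3=\frac92U_a(b_m)$, the homothetic parabolic motion $\beta b_m t^{2/3}$ solves the clustered problem exactly. Inter-cluster distances grow like $t$, so $\nabla W(\gamma_0)=O(t^{-2})$, which is integrable against test functions with the weight given by Hardy's inequality.

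\textbf{Step 2: the renormalized action.} Define
\[
\mathcal A(\varphi)=\int_1^{+\infty}\Big[\tfrac12\|\dot\varphi\|_{\mathcal M}^2+\langle \mathcal M\ddot\gamma_0,\varphi\rangle_{\mathcal M}\dots+U(\gamma_0+\varphi)-U(\gamma_0)-\langle\nabla U(\gamma_0),\varphi\rangle\Big]\,dt,
\]
that is, the Lagrangian action minus its divergent part and the linear term. The renormalized integrand is bounded below and finite on $\mathcal D$, by Hardy's inequality $\int\|\varphi\|^2/t^2\le 4\int\|\dot\varphi\|^2$. The linear term is controlled by the residual from Step 1.

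\textbf{Step 3: coercivity and existence of a minimizer.} For the internal cluster part, minimality of $b_m$ is what yields coercivity. I would use the Maderna--Venturelli / Polimeni--Terracini argument here: after a blow-up in the time variable, $U_a(\gamma_0+\varphi)-U_a(\gamma_0)-\langle\nabla U_a(\gamma_0),\varphi\rangle$ is bounded below by a quantity dominated by the Hardy term with constant smaller than $1/2$. On the minimal level this relies on the "no better than homothetic" comparison: the parabolic homothetic motion along a minimal configuration minimizes the action among curves with the same asymptotics. Weak lower semicontinuity then gives a minimizer $\bar\varphi$.

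\textbf{Step 4: collision avoidance.} Use Marchal's averaged-variation argument on each finite interval; this is where $d\ge2$ is needed. The minimizer is then a classical solution on $(1,+\infty)$ and is expansive.

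\textbf{Step 5: asymptotics.} From $\bar\varphi\in\mathcal D$ we get $\|\bar\varphi(t)\|\le C t^{1/2}$. This is improved by bootstrapping the linearized equation around $\gamma_0$. Its Euler-type structure, with characteristic exponents determined by the Hessian of $U_a$ at $b_m$, gives $\bar\varphi=o(t^{1/3+\varepsilon})$ via minimality and the indicial roots. The energy $h>0$ is fixed by $\frac12\|a\|^2$.

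\textbf{Main obstacle.} The hardest step is Step 3 (and the rate in Step 5). Here one must show that the internal parabolic correction near the minimal central configuration does not destroy coercivity, and control the coupling with the hyperbolic inter-cluster terms.
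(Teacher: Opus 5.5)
Your architecture matches the paper's, which it inherits from Polimeni--Terracini: perturb $r_0(t)=at+\beta b_m t^{2/3}$ in $\mathcal D_0^{1,2}(1,+\infty)$, minimize a renormalized action by the direct method using $U=U_a+W$, exclude collisions with Marchal, and read the rate off an Euler-type equation. Two points need repair.

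\textbf{The sign in Step 2.} As written, the $\ddot\gamma_0$ term in your functional has the wrong sign, so its Euler--Lagrange equation is not Newton's. Set $\gamma=\gamma_0+\psi$ with $\psi=\varphi+x-\gamma_0(1)$. The integrand must be $\frac12\|\dot\varphi\|_{\mathcal M}^2+U(\gamma_0+\psi)-U(\gamma_0)-\langle\mathcal M\ddot\gamma_0,\varphi\rangle$, which is what integrating the cross term $\langle\mathcal M\dot\gamma_0,\dot\varphi\rangle$ by parts produces.

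\textbf{Step 3 is only asserted.} You call this the main obstacle, and the justification you give is not what makes it work. A minimality property of the homothetic motion would at best give a lower bound for the single datum $x=\gamma_0(1)$, not growth in $\|\varphi\|_{\mathcal D}$. No blow-up is needed; the missing idea is a pointwise inequality.

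Put $p(t)=\beta b_m t^{2/3}$. Then $U_a(\gamma_0+\psi)=U_a(p+\psi)$ and $\mathcal M\ddot p=\nabla U_a(p)=-U_a(b_m)\mathcal M p/\|p\|_{\mathcal M}^3$. Minimality gives $U_a(y)\ge U_a(b_m)\|y^{\mathrm{int}}\|_{\mathcal M}^{-1}$, where $y^{\mathrm{int}}$ is taken relative to the cluster barycentres. Convexity of $\rho\mapsto\rho^{-1/2}$ in $\rho=\|y^{\mathrm{int}}\|_{\mathcal M}^2$ then yields
\[
U_a(p+\psi)-U_a(p)-\langle\nabla U_a(p),\psi\rangle\ \ge\ -\frac{U_a(b_m)}{2\beta^3}\,\frac{\|\psi\|_{\mathcal M}^2}{t^2}=-\frac{1}{9}\,\frac{\|\psi\|_{\mathcal M}^2}{t^2}.
\]
By Hardy, the kinetic term plus the intra-cluster terms is at least $(\frac12-\frac49)\|\varphi\|_{\mathcal D}^2$, minus terms of order $\|\varphi\|_{\mathcal D}$.

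The inter-cluster part $W(\gamma_0+\psi)-W(\gamma_0)$ costs only $-C\log(2+\|\varphi\|_{\mathcal D})$. Use $W\ge0$ on $[1,T]$, and a first-order Taylor bound on $[T,+\infty)$ with $T\sim\|\varphi\|_{\mathcal D}^2$, where $|\psi(t)|\le\|\varphi\|_{\mathcal D}\sqrt t+C\ll t$.

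Weak lower semicontinuity is not automatic either. The map $\varphi\mapsto\int\|\varphi\|_{\mathcal M}^2/t^2$ is not weakly continuous, since Proposition \ref{prop:compact_emb} gives compactness only for $\varepsilon>0$. As in the paper, you need $\int\frac12\|\dot\varphi\|_{\mathcal M}^2+\frac12\langle\nabla^2U_a(p)\varphi,\varphi\rangle$ to be an equivalent norm. This again follows from minimality, via $\langle\nabla^2U_a(p)\varphi,\varphi\rangle\ge-\frac29\|\varphi\|_{\mathcal M}^2/t^2$. The cubic remainder is then handled by Fatou on compacts and dominated convergence at infinity.

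\textbf{Where you differ from the paper, harmlessly.} For collision avoidance it suffices that restrictions to $[t_1,t_2]$ are fixed-time minimizers; for variations supported there, the renormalized and Lagrangian actions differ by a constant. The paper instead passes through free-time minimality, which is needed for the corollary, not for this theorem.

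For the rate, the paper proves a scalar inequality $\ddot\eta+\mu\eta/t^2\ge f$ for $\eta=\|\psi\|_{\mathcal M}$ and applies a maximum principle. You use the linearization, which by homogeneity is exactly the Euler system $\ddot y=t^{-2}\mathcal M^{-1}\nabla^2U_a(\beta b_m)\,y+f$. It decouples along eigenvectors, and minimality bounds the eigenvalues below by $-\frac29$, so every decaying exponent satisfies $\theta_-\le\frac13$. This gives componentwise information, in the spirit of the paper's separate estimate for $\langle\varphi,b_m\rangle_{\mathcal M}$.

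Be aware that the first pass is resonant in the degenerate (e.g.\ rotational) directions. With $\|\psi\|=O(t^{1/2})$ the nonlinear forcing is $O(t^{-5/3})$, plus tidal terms $O(t^{-7/3})$, and this yields only $O(t^{1/3}\log t)$. That already gives $o(t^{1/3+\varepsilon})$, and a second pass removes the logarithm in the internal part. The cluster centres contribute $O(\log t)$.
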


The asymptotic decomposition in \eqref{eq:hyperbolic_parabolic_PT} reflects a cluster scattering mechanism: cluster centers of mass separate linearly, while internal cluster scales grow at the parabolic rate $t^{2/3}$ and converge (after renormalization) to minimal central configurations.

\begin{cor}[Polimeni and Terracini, 2024 \cite{PolimeniTerracini}]
The motions $\gamma(t)$ in Theorems \ref{thm_hyperbolic}, \ref{thm_parabolic}, and \ref{thm_partially_hyperbolic} are continuous at $t=1$ and collision-free for $t>1$. Moreover, they are free-time minimizers of the action at their respective energy levels.
\end{cor}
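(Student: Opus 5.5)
The corollary has three parts: continuity at $t=1$, absence of collisions for $t>1$, and free-time minimality at the energy level. I would derive all three from how the motions in Theorems~\ref{thm_hyperbolic}, \ref{thm_parabolic} and \ref{thm_partially_hyperbolic} are built. Each is obtained as a minimizer of a renormalized action on half-lines $[1,+\infty)$ with $\gamma(1)=x$.

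Concretely, fix $T>1$ and minimize the renormalized Lagrangian action over curves in $H^1([1,T])$ starting at $x$. The renormalization is obtained by subtracting the reference motion $at+\beta b_m t^{2/3}$ (or its hyperbolic or parabolic analogue) together with the corresponding boundary and counterterms, so that the functional stays finite as $T\to\infty$. One then passes to the limit $T\to+\infty$ using uniform local bounds.

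\textbf{Continuity at $t=1$.} The limit curve is an $H^1_{loc}$ limit of curves all satisfying $\gamma(1)=x$. Since $H^1$ embeds in $C^0$ on compact intervals, the endpoint condition passes to the limit, and $\gamma$ is continuous at $t=1$ with $\gamma(1)=x$.

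\textbf{No collisions for $t>1$.} The renormalization differs from the standard Lagrangian action only by terms that do not depend on the curve on compact subintervals. Hence every restriction $\gamma|_{[t_1,t_2]}$, with $1<t_1<t_2$, is a fixed-endpoint minimizer of the standard action. For the Newtonian potential in dimension $d\ge2$, Marchal's theorem (in the averaged-variation form of Ferrario--Terracini / Chenciner) says fixed-endpoint minimizers have no interior collisions. Applied on every compact $[t_1,t_2]\subset(1,\infty)$, this gives $\gamma(t)\in\Omega$ for all $t>1$, so $\gamma$ solves \eqref{eq:NBP} there.

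\textbf{Free-time minimality.} First, the energy is $h=\frac12\|a\|_{\mathcal M}^2$: this follows from the asymptotics, since $U\to0$ and $\dot\gamma\to a$. Next, recall that the critical action potential at energy $h$ is the infimum of $A_{L+h}$ over curves of arbitrary duration. I need to show that for every $1\le t_1<t_2$ and every curve $\sigma:[0,\tau]\to\mathcal X$ with the same endpoints,
\[
A_{L+h}(\gamma|_{[t_1,t_2]})\le A_{L+h}(\sigma).
\]

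I would argue by contradiction. Suppose a competitor $\sigma$ of duration $\tau\neq t_2-t_1$ had strictly smaller $(L+h)$-action. Replace $\gamma|_{[t_1,t_2]}$ by $\sigma$ and time-shift the tail by $\delta=\tau-(t_2-t_1)$. The cost of the shift is computed from the asymptotic expansion:
\[
\int_{t_2}^{\infty}\bigl(L(\gamma(s),\dot\gamma(s))+h\bigr)\,\ud s\quad\text{versus the shifted tail.}
\]
Up to $o(1)$ errors, the renormalized tail changes by $-h\delta$ plus the variation of the counterterm at infinity. This exactly compensates the $h\delta$ contained in $A_{L+h}(\sigma)$. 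The upshot is a curve with strictly smaller renormalized action, contradicting global minimality.

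\textbf{Main obstacle.} The delicate step is the time-reparametrization and tail comparison. The renormalizing terms $-\log(t)\nabla U(a)$ and $t^{2/3}$ must be shown invariant under a finite time shift up to $o(1)$. This needs the remainder estimates in \eqref{e:MV}, \eqref{eq:parabolic_MVPT} and \eqref{eq:hyperbolic_parabolic_PT}, together with a careful accounting of $h\delta$.

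For the parabolic case, $h=0$ and Maderna--Venturelli's argument applies directly. In the other two cases I would try first a gluing argument: connect the shifted tail $\gamma(t+\delta)$ back to the reference asymptotics over a long interval $[T,2T]$, at cost $o(1)$ as $T\to\infty$.
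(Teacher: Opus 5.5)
Your arguments for continuity at $t=1$ and for the absence of collisions are sound, and the collision argument is organized differently from the paper. You use that, for variations supported in $[t_1,t_2]$, the renormalized action changes exactly as $\mathcal{A}_L(\gamma|_{[t_1,t_2]})$ does: the cross term $\int\langle\dot r_0,\dot\phi\rangle_{\mathcal M}$ cancels $-\int\langle\mathcal M\ddot r_0,\phi\rangle$. So every restriction is a fixed-time minimizer, and Marchal's theorem, which only requires fixed-time minimality, applies at once. The paper (Proposition~\ref{th:ren_act_pr}, relying on \cite[Corollary 6.3]{PolimeniTerracini}) instead proves free-time minimality first and then invokes Marchal. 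Your ordering decouples the two claims, which is a small gain. One correction: these motions are not obtained by minimizing on $[1,T]$ and letting $T\to+\infty$. They are obtained directly as minimizers of the renormalized functional on $\mathcal{D}_0^{1,2}(1,+\infty)$. Your contradiction argument uses exactly this global minimality, which an exhaustion limit would not provide without extra work.

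The gap is in free-time minimality, which is the real content of the corollary. You assert that the tail shift ``exactly compensates'' the $h\delta$ term without computing anything, and your diagnosis of the difficulty is misplaced. The renormalization contains no $\log t$ term: the reference paths are $at$, $\beta b_m t^{2/3}$, $at+\beta b_m t^{2/3}$, and the logarithm is part of $\varphi$. No refined remainder estimate is needed. No gluing is needed either, since the shifted tail $\tilde\varphi(s)=\varphi(s-\delta)+r_0(s-\delta)-r_0(s)$, $s\ge t_2+\delta$, already lies in $\mathcal{D}_0^{1,2}(1,+\infty)$ because $\dot r_0(s-\delta)-\dot r_0(s)=O(s^{-4/3})$. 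What closes the argument is the following identity, obtained by integrating $\langle\dot r_0,\dot\varphi\rangle_{\mathcal M}$ by parts and using $\varphi(1)=0$:
\[
\int_1^T\Bigl(\tfrac12\|\dot\varphi\|_{\mathcal M}^2+U(\gamma)-U(r_0)-\langle\mathcal M\ddot r_0,\varphi\rangle\Bigr)\ud t=\mathcal{A}_L(\gamma|_{[1,T]})-\langle\dot r_0(T),\varphi(T)\rangle_{\mathcal M}-c(T),\qquad c(T)=\int_1^T\Bigl(\tfrac12\|\dot r_0\|_{\mathcal M}^2+U(r_0)\Bigr)\ud t,
\]
with the clustered $\tilde U(r_0)$ in place of $U(r_0)$ in the hyperbolic-parabolic case. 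Apply it to the competitor at time $T+\delta$ and to $\varphi$ at time $T$, using $\tilde\varphi(T+\delta)=\varphi(T)+r_0(T)-r_0(T+\delta)$. This gives
\[
\mathcal{A}(\tilde\varphi)-\mathcal{A}(\varphi)=\mathcal{A}_L(\sigma)-\mathcal{A}_L(\gamma|_{[t_1,t_2]})+\lim_{T\to+\infty}\Bigl(\langle\dot r_0(T+\delta),r_0(T+\delta)-r_0(T)\rangle_{\mathcal M}-\langle\dot r_0(T+\delta)-\dot r_0(T),\varphi(T)\rangle_{\mathcal M}-c(T+\delta)+c(T)\Bigr).
\]
The three terms tend respectively to $2h\delta$, to $0$ (because $\|\varphi(T)\|_{\mathcal M}\le\|\varphi\|_{\mathcal D}\sqrt{T}$ and $\ddot r_0=O(T^{-4/3})$), and to $-h\delta$. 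Hence $\mathcal{A}(\tilde\varphi)-\mathcal{A}(\varphi)=\bigl(\mathcal{A}_L(\sigma)+h\tau\bigr)-\bigl(\mathcal{A}_L(\gamma|_{[t_1,t_2]})+h(t_2-t_1)\bigr)$ exactly, and minimality of $\varphi$ yields free-time minimality. This works in all three regimes at once; in the parabolic one $h=0$, so no appeal to Maderna--Venturelli is needed. It uses only $\varphi\in\mathcal{D}_0^{1,2}(1,+\infty)$, not the asymptotic expansions.
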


Recently, further existence results for expansive solutions have appeared applying to $-\alpha$-homogeneous solutions with $\alpha\in(0,2)$. For instance, Yu \cite{Yu_Hyperbolic} proved in 2024 the existence of hyperbolic motions $\gamma:[t_0,+\infty)\to\Omega$ for prescribed initial configuration, positive energy $h$, and collisionless asymptotic configuration $a$, of the form
\begin{equation}\label{eq:Yu_hyperbolic}
\gamma(t)=\sqrt{2h}\,a\,t+o(t),
\qquad\text{as }t\to+\infty.
\end{equation}
Yu's approach is based on free-time minimizers at fixed energy and Marchal's principle, building on ideas originating in \cite{LiuYanZhou}.
\medskip

As said, we extend the existence theory of expansive motions to homogeneous Newtonian potentials of arbitrary degree $-\alpha$, thus extending and refining Yu's results in \cite{Yu_Hyperbolic}. On the other hand, we obtain sharper asymptotic descriptions of such motions. Both results are achieved within a unified variational framework based on the renormalized action principle of Polimeni and Terracini, adapted to the homogeneous setting.

\medskip

\subsection{Main results of this paper.}
We now describe our contributions. Throughout, we distinguish the Euclidean gradient $\nabla U^\alpha$ in $\mathbb{R}^{dN}$ from the gradient with respect to the mass inner product:
\[
\nabla^{\mathcal{M}}U^\alpha(x)=\mathcal{M}^{-1}\nabla U^\alpha(x).
\]
Notice that in \cite{MadernaVenturelli_HyperbolicMotions,PolimeniTerracini} the notation $\nabla U$ is often used for $\nabla^{\mathcal M}U$.

For $\alpha\neq 1$ and $a\in\Omega$, we introduce the vector
\begin{equation}\label{e:Gamma1}
\Gamma_1=\Gamma_1(\alpha,a):=-\frac{\mathcal{M}^{-1}\nabla U^\alpha(a)}{\alpha(1-\alpha)}.
\end{equation}

\medskip

\noindent
We state our main results in Theorems~\ref{thm:hyp_alpha_expansive_1}, \ref{thm:hyp_alpha_expansive_2}, \ref{thm:par_alpha} and \ref{thm:HP_alpha}.

\begin{thm}[Hyperbolic motions with $\alpha>1/2$]\label{thm:hyp_alpha_expansive_1}
Let $x\in\mathcal{X}$ be arbitrary and let $\alpha\in(1/2,\infty)$. For any $a\in\Omega$ there exists a hyperbolic solution $\gamma(t)$ of \eqref{eq:NBP} defined for $t>1$, with $\gamma(1)=x$. Moreover, for proper constant vectors $Q,Q',Q''\in\mathcal{X}$, as $t\to+\infty$:
\begin{enumerate}
\item if $\alpha>1$,
\[
\gamma(t)=at+\Gamma_1\,t^{1-\alpha}+Q+o(t^{1-\alpha});
\]
\item if $\alpha=1$,
\[
\gamma(t)=at-\mathcal{M}^{-1}\nabla U(a)\,\log t+Q'+o(1);
\]
\item if $\alpha\in(1/2,1)$,
\[
\gamma(t)
=
at+\Gamma_1\,t^{1-\alpha}
-\frac{\mathcal{M}^{-1}\nabla^2U(a)\Gamma_1}{2\alpha(1-2\alpha)}\,t^{1-2\alpha}
+Q''+o(t^{1-2\alpha}).
\]
\end{enumerate}
\end{thm}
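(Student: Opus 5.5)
We follow the renormalized action scheme of Polimeni and Terracini. First we build an explicit \emph{reference curve} $\gamma_0(t)=at+\Gamma_1 t^{1-\alpha}+\dots$ that solves \eqref{eq:NBP} up to an integrable error. Then we look for the actual solution as $\gamma=\gamma_0+\varphi$, with $\varphi$ a minimizer of a renormalized action on a space of perturbations.

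\textbf{Step 1: the reference curve.} Expand $\nabla U^\alpha(at+y)=t^{-\alpha-1}\nabla U^\alpha(a)+t^{-\alpha-2}\nabla^2U^\alpha(a)y+\dots$, using the $(-\alpha-1)$-homogeneity of $\nabla U^\alpha$ and the fact that $a\in\Omega$. Solving $\mathcal M\ddot\gamma_0=\nabla U^\alpha(\gamma_0)$ order by order:
\begin{itemize}
\item The first correction $c\,t^{1-\alpha}$ must satisfy $c\,(1-\alpha)(-\alpha)=\mathcal M^{-1}\nabla U^\alpha(a)$, which gives $c=\Gamma_1$ from \eqref{e:Gamma1}.
\item If $\alpha=1$, the same balance instead produces the term $-\mathcal M^{-1}\nabla U(a)\log t$.
\item The second correction $c'\,t^{1-2\alpha}$ solves $c'(1-2\alpha)(-2\alpha)=\mathcal M^{-1}\nabla^2U^\alpha(a)\Gamma_1$. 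This yields exactly the stated coefficient $-\mathcal M^{-1}\nabla^2U(a)\Gamma_1/(2\alpha(1-2\alpha))$.
\end{itemize}
We keep as many terms as needed for the residual $f(t)=\mathcal M\ddot\gamma_0-\nabla U^\alpha(\gamma_0)$ to satisfy $|f(t)|=O(t^{-2-\beta})$ for some $\beta>0$. Keeping one term (for $\alpha\ge1$) or two terms (for $\alpha\in(1/2,1)$) gives $f=O(t^{-\alpha-2+\min(1-\alpha,\dots)})$, hence $f=O(t^{-3\alpha})$ in the latter case. In particular $tf\in L^1$ when $3\alpha>2$. This is presumably where the restriction $\alpha>1/2$ enters, together with integrability of the quadratic remainder.

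\textbf{Step 2: variational problem.} Define the renormalized action on $\varphi\in\mathcal D^{1,2}_0([1,\infty))$ with $\varphi(1)=x-\gamma_0(1)$ by
\[
\mathcal A(\varphi)=\int_1^\infty \tfrac12\|\dot\varphi\|_{\mathcal M}^2+U^\alpha(\gamma_0+\varphi)-U^\alpha(\gamma_0)-\langle\nabla U^\alpha(\gamma_0),\varphi\rangle+\langle f,\varphi\rangle\,dt .
\]
The Euler--Lagrange equation of $\mathcal A$ is precisely \eqref{eq:NBP} for $\gamma=\gamma_0+\varphi$. We check the following:
\begin{itemize}
\item \emph{Well-definedness and coercivity.} Hardy's inequality gives $|\varphi(t)|\le C\sqrt t\,\|\dot\varphi\|_{L^2}$, so $\int\langle f,\varphi\rangle$ is finite and linearly bounded in $\|\dot\varphi\|$.
\item The potential part is nonnegative up to a convexity defect controlled by $\nabla^2U^\alpha(\gamma_0)\sim t^{-\alpha-2}$. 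Since $t^{-\alpha-2}\cdot t$ is integrable, the defect is bounded.
\item Lower semicontinuity holds by Fatou's lemma.
\end{itemize}
Hence a minimizer exists.

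\textbf{Step 3: absence of collisions.} We must show the minimizer is collision-free for $t>1$, so that it is a genuine solution. Since $\alpha>1/2$ is not necessarily $\ge2$, collisions have finite action, and we argue as follows:
\begin{itemize}
\item Marchal's averaging principle excludes interior collisions in $d\ge2$.
\item Blow-up arguments together with the energy/Lagrange--Jacobi identity exclude the remaining cases.
\item For large $t$, collisions are excluded because $\varphi=O(\sqrt t)=o(t)$ while $at$ separates the bodies.
\end{itemize}
We expect this step, combined with handling the non-convex part for all $\alpha>1/2$, to be the main technical obstacle. We would adapt the arguments of \cite{PolimeniTerracini} and \cite{Yu_Hyperbolic}.

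\textbf{Step 4: sharp asymptotics.} The a priori bound $\varphi=O(\sqrt t)$ is not yet enough. We use the equation
\[
\mathcal M\ddot\varphi=\nabla U^\alpha(\gamma_0+\varphi)-\nabla U^\alpha(\gamma_0)-f=O(t^{-\alpha-2}|\varphi|)+O(t^{-2-\beta})
\]
and bootstrap:
\begin{itemize}
\item $\dot\varphi\in L^2$ gives $\ddot\varphi\in L^1$, so $\dot\varphi$ has a limit. This limit must be zero, otherwise $\dot\varphi\notin L^2$.
\item Integrating twice from infinity gives $\dot\varphi=O(t^{-\min(\alpha,\beta)-1/2})$, and repeating the argument gives $\dot\varphi\in L^1$.
\item Therefore $\varphi\to Q$, and the rate $\varphi-Q=o(t^{1-\alpha})$ (respectively $o(1)$, $o(t^{1-2\alpha})$) follows from integrating the improved derivative bound.
\end{itemize}
This yields the three stated expansions.
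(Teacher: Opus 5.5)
The genuine gap is in Step~2: your coercivity argument fails for $\alpha\in(1/2,1]$, which is exactly the range where the action must be renormalized.

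\emph{Why it fails.} The remainder $U(\gamma_0+\varphi)-U(\gamma_0)-\langle\nabla U(\gamma_0),\varphi\rangle$ is governed by $\nabla^2U$ along the whole segment $\gamma_0+s\varphi$, not at $\gamma_0$. Since $\|\varphi(t)\|_{\mathcal M}$ can be as large as $\|\varphi\|_{\mathcal D}\sqrt t$, which is comparable to $|a_{ij}|t$ for all $t\lesssim\|\varphi\|_{\mathcal D}^2$, this segment may cross $\Delta$. So there is no bound $|\nabla^2U|\lesssim t^{-2-\alpha}$ uniform in $\varphi$. Even granting one, $\int_1^{+\infty}t^{-2-\alpha}\|\varphi\|_{\mathcal M}^2\,\ud t\le C\|\varphi\|_{\mathcal D}^2$ is quadratic, not bounded. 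It competes with $\frac12\|\varphi\|_{\mathcal D}^2$ with no smallness of $C$. This is precisely the obstruction the paper meets for $\alpha\le1/2$, where it needs $\|a\|_{\mathcal M}$ large (or a late initial time) plus a rescaling.

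\emph{How to fix it.} You need a \emph{subquadratic} lower bound. The paper gets it pairwise, integrating explicitly with $|\varphi_{ij}(t)|\le\|\varphi_{ij}\|_{\mathcal D}\sqrt t$, which gives $\mathcal A(\varphi)\ge\sum m_im_j\big(\|\varphi_{ij}\|_{\mathcal D}^2/(2M)-C\|\varphi_{ij}\|_{\mathcal D}^{2(1-\alpha)}+C\big)$. In your framework, split at $T=c\|\varphi\|_{\mathcal D}^2$:
on $[1,T]$ use $U\ge0$, losing only $\int_1^TU(\gamma_0)\lesssim T^{1-\alpha}$ (resp.\ $\log T$ if $\alpha=1$) plus a term linear in $\|\varphi\|_{\mathcal D}$; on $[T,+\infty)$ the segment stays at distance $\gtrsim t$ from $\Delta$, so the remainder is $\ge -C\|\varphi\|_{\mathcal D}^2T^{-\alpha}\simeq -C\|\varphi\|_{\mathcal D}^{2-2\alpha}$.
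For $\alpha>1$ no splitting is needed, since $U\ge0$ and $U(\gamma_0)\in L^1$. For the same reason, Fatou's lemma alone does not give weak lower semicontinuity of the renormalized functional, because its integrand has no fixed integrable minorant. As in the paper, use Fatou on $[1,\bar T]$ and dominated convergence on $[\bar T,+\infty)$, exploiting the uniform bound $|\varphi^n(t)|\le k\sqrt t$ of a weakly convergent sequence.

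Two slips must also be corrected.
\begin{enumerate}
\item \emph{Sign of the linear term.} With $+\langle f,\varphi\rangle$ the Euler--Lagrange equation of your functional is $\mathcal M\ddot\gamma=\nabla U(\gamma)+2f$. The term must be $-\langle f,\varphi\rangle$, so that the total linear part is $-\langle\mathcal M\ddot\gamma_0,\varphi\rangle$ as in Definition~\ref{def:ren_action}. Your Step~4 already uses the correct equation.
\item \emph{Order of the residual.} For the two-term reference curve, the first term of $\nabla U(\gamma_0)$ not cancelled by $\mathcal M\ddot\gamma_0$ is $t^{-1-3\alpha}\big(\nabla^2U(a)\Gamma_2+\tfrac12\nabla^3U(a)[\Gamma_1,\Gamma_1]\big)$. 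Hence $f=O(t^{-1-3\alpha})$, not $O(t^{-3\alpha})$. This matters: with your bound, $\dot\varphi\in L^1$ would fail for $\alpha\le2/3$, and for $\alpha\in(2/3,1)$ the contribution of $f$ to $\varphi-Q$ would be $O(t^{2-3\alpha})$, which is not $o(t^{1-2\alpha})$. So case (3) would not follow.
\end{enumerate}
With the correct bound ($\beta=3\alpha-1$), your bootstrap gives $\varphi-Q=O(t^{-\alpha}+t^{1-3\alpha})=o(t^{1-2\alpha})$. Likewise it gives $O(t^{-\alpha}+t^{1-2\alpha})=o(t^{1-\alpha})$ for $\alpha>1$ and $O(t^{-1}\log t)=o(1)$ for $\alpha=1$.

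Step~4 is then correct and genuinely different from the paper. The paper keeps $r_0=at$, so $\ddot r_0=0$ and the action splits pairwise, and it recovers $\Gamma_1$ and then $\Gamma_2$ a posteriori from $\dot\varphi\to0$, L'H\^opital, and re-substitution into $\ddot\varphi$. Your choice builds the coefficients into $\gamma_0$ and reduces the asymptotics to $\varphi\to Q$ with a rate.

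Finally, Step~3 needs no blow-up or Lagrange--Jacobi argument. For $\psi\in H^1_0(1,t_2)$ extended by zero, integrating $\int\langle\mathcal M\dot\gamma_0,\dot\psi\rangle$ by parts shows that $\mathcal A(\varphi+\psi)-\mathcal A(\varphi)$ equals the change in Lagrangian action of $\gamma$ on $[1,t_2]$. Thus $\gamma$ is a fixed-endpoint minimizer on every compact interval, and Theorem~\ref{thm_marchal} applies directly; the paper packages this, together with free-time minimality, in Proposition~\ref{th:ren_act_pr}.
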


\begin{rem}
In Theorem \ref{thm:hyp_alpha_expansive_1} we include the Newtonian case $\alpha=1$, already treated in \cite{PolimeniTerracini}, to emphasize the continuity of the asymptotic structure across the homogeneity exponent.
\end{rem}

To describe hyperbolic solutions when $\alpha\in(0,1/2)$, for every $a\in\Omega$ we define a sequence of correction vectors $\Gamma_k=\Gamma_k(\alpha,a)\in\mathbb{R}^{dN}$ for
\[
k=2,\dots,\left\lfloor\frac{1}{2\alpha}\right\rfloor+1,
\]
recursively by
\begin{equation}\label{eq:gamma_k}
\Gamma_k
:=
-\frac{
\displaystyle
\sum_{q=1}^{k-1}\frac{1}{q!}
\sum_{j_1+\cdots+j_q=k-1}
\mathcal{M}^{-1}\nabla^{q+1}U^\alpha(a)\big[\Gamma_{j_1},\dots,\Gamma_{j_q}\big]
}{
k\alpha\,(1-k\alpha)
},
\end{equation}
where $\Gamma_1$ is given by \eqref{e:Gamma1} (in particular, $\Gamma_2=-\frac{\mathcal{M}^{-1}\nabla^2U(a)\Gamma_1}{2\alpha(1-2\alpha)}$ as in Theorem~\ref{thm:hyp_alpha_expansive_1}).

In the borderline case $\alpha=1/2$ we set
\[
\widetilde\Gamma=\widetilde\Gamma(a):=-4(\mathcal{M}^{-1})^2\nabla^2U(a)\,\nabla U(a).
\]

\begin{thm}[Hyperbolic motions with {$\alpha\in(0,1/2]$}]
\label{thm:hyp_alpha_expansive_2}
Let $x\in\mathcal{X}$ be arbitrary and let $\alpha\in(0,1/2]$. For any $a\in\Omega$ there exists a hyperbolic solution $\gamma$ of \eqref{eq:NBP} with $\gamma(1)=x$ such that, as $t\rightarrow+\infty$:
\begin{enumerate}
\item if $\alpha=1/2$, then
\begin{equation}\label{eq:hyp_small_alpha_2}
\gamma(t)=at+\Gamma_1\,t^{1/2}+\widetilde\Gamma\log t+o(\log t);
\end{equation}
\item if $\alpha\in(0,1/2)$, then
\begin{equation}\label{eq:hyp_small_alpha_1}
\gamma(t)
=
at+\sum_{k=1}^{P}\Gamma_k\,t^{1-k\alpha}+o\!\left(t^{1-P\alpha}\right),
\end{equation}
where $P=\lfloor 1/(2\alpha)\rfloor+1$.
\end{enumerate}
\end{thm}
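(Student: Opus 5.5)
Following the renormalized action scheme of \cite{PolimeniTerracini}, we first build an explicit approximate solution $\gamma_0(t)=at+\varphi(t)$ and then find the true solution as $\gamma=\gamma_0+\varphi_1$, with $\varphi_1$ obtained by minimization. For $\alpha\in(0,1/2)$ we take
\[
\varphi(t)=\sum_{k=1}^{P}\Gamma_k t^{1-k\alpha},\qquad P=\lfloor 1/(2\alpha)\rfloor+1 .
\]
The recursion \eqref{eq:gamma_k} is chosen so that the residual $\mathcal{M}\ddot\gamma_0-\nabla U^\alpha(\gamma_0)$ cancels order by order. Indeed, expanding $\nabla U^\alpha(at+\varphi)=t^{-1-\alpha}\nabla U^\alpha(a+\varphi/t)$ by homogeneity and Taylor's formula, the coefficient of $t^{-1-k\alpha}$ collects exactly the multilinear terms $\nabla^{q+1}U^\alpha(a)[\Gamma_{j_1},\dots,\Gamma_{j_q}]$ with $j_1+\dots+j_q=k-1$. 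This is matched by $(1-k\alpha)(-k\alpha)\Gamma_k$ coming from $\ddot\varphi$. The resulting residual is $f(t)=O(t^{-1-(P+1)\alpha})$. Since $(P+1)\alpha>1/2+\alpha$, we get $t f(t)\in L^1$ at infinity and $f\in L^2(t^{\,\cdot}\,dt)$ with enough margin.

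For $\alpha=1/2$ the step $k=2$ is resonant, because $1-2\alpha=0$. We then replace $t^{0}$ by $\log t$: $\ddot{(\log t)}=-t^{-2}$ must balance $\mathcal{M}^{-1}\nabla^2U(a)\Gamma_1 t^{-2}$, which produces $\widetilde\Gamma=\mathcal{M}^{-1}\nabla^2U(a)\Gamma_1$. With $\Gamma_1=-4\mathcal{M}^{-1}\nabla U(a)$ this is the stated formula, and the residual becomes $O(t^{-5/2}\log t)$.

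Next we consider the renormalized action
\[
\mathcal{A}(\varphi_1)=\int_1^{\infty}\Big(\tfrac12\|\dot\varphi_1\|_{\mathcal M}^2+U^\alpha(\gamma_0+\varphi_1)-U^\alpha(\gamma_0)-\langle\nabla U^\alpha(\gamma_0),\varphi_1\rangle-\langle f,\varphi_1\rangle_{\mathcal M}\Big)dt .
\]
It is defined on the space $D_0$ of $\varphi_1$ with $\varphi_1(1)=x-\gamma_0(1)$ and $\dot\varphi_1\in L^2$, so the endpoint condition gives $\gamma(1)=x$ and its Euler--Lagrange equation is \eqref{eq:NBP}. Finiteness of $\mathcal{A}$ follows from three facts:
\begin{itemize}
\item Hardy's inequality gives $\|\varphi_1(t)\|\le C\sqrt t$.
\item The Taylor remainder of $U^\alpha$ is bounded by $|\varphi_1|^2 t^{-2-\alpha}$ wherever $|\varphi_1|\ll t$.
\item The forcing term is controlled, since $\int |f|\sqrt t<\infty$.
\end{itemize}
Coercivity and weak lower semicontinuity then follow as in \cite{PolimeniTerracini}: the kinetic part dominates, and the potential is bounded below by $0$ in the region where $|\varphi_1|$ is comparable to $t$.

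The minimizer must avoid collisions for $t>1$. To show this we use Marchal's principle (or level-estimate blow-ups) adapted to homogeneous potentials. This is valid for all $\alpha>0$ because $d\ge2$.

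The main obstacle is the asymptotics, namely showing $\varphi_1=o(t^{1-P\alpha})$, respectively $o(\log t)$. The a priori bound $\varphi_1=O(\sqrt t)$ from Hardy is not enough when $1-P\alpha<1/2$. We would therefore bootstrap on the linearized equation
\[
\mathcal{M}\ddot\varphi_1=\nabla^2U^\alpha(at)\varphi_1+O(t^{-2-\alpha}|\varphi_1|^2/t)+f .
\]
The first step is to establish $\dot\varphi_1\to0$ by showing that $\ddot\varphi_1$ is integrable. From the equation, $|\ddot\varphi_1|\lesssim t^{-2-\alpha}|\varphi_1|+|f|$, which is integrable, so $\dot\varphi_1$ has a limit. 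That limit must vanish because $\dot\varphi_1\in L^2$. Writing $\dot\varphi_1(t)=-\int_t^\infty\ddot\varphi_1$ then gives $|\dot\varphi_1|\lesssim t^{-1-\alpha}\sup|\varphi_1|+t^{-(P+1)\alpha}$. Integrating and iterating improves $|\varphi_1|$ successively, up to $O(t^{1-(P+1)\alpha})$ plus a constant. Since $(P+1)\alpha>1/2$ is not necessarily above $1$, we only need to reach the order $o(t^{1-P\alpha})$, which follows because $(P+1)\alpha>P\alpha$.

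At $\alpha=1/2$ the same argument gives $\varphi_1=O(1)$ up to an $o(\log t)$ term. For this it suffices that the residual satisfies $\int^\infty t\,|f|<\infty$ modulo a $\log$ factor, and this is exactly where the $o(\log t)$ remainder in \eqref{eq:hyp_small_alpha_2} comes from.
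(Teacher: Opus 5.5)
Your scheme is the paper's: a renormalized action around a reference path in powers of $t^{-\alpha}$, the direct method, Marchal's principle, then asymptotics from the Euler--Lagrange equation. The genuine gap is coercivity, which you dispatch in one line. This is precisely where $\alpha\le 1/2$ departs from \cite{PolimeniTerracini}, and the reason you give does not work.

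When the hyperbolic reference path is $at$ (as in \cite{PolimeniTerracini} and in the paper for $\alpha>1/2$), one has $\ddot r_0=0$, and $U(at+\psi)-U(at)$ is bounded crudely pair by pair. Here the term $\langle\nabla U(\gamma_0),\varphi_1\rangle$ is of size $t^{-1-\alpha}\|\varphi_1\|_{\mathcal D}\sqrt t$, which is not integrable on its own. So you must keep the second-order Taylor remainder, which for $|\varphi_1|\ll t$ is about $\frac12\langle\nabla^2U(\gamma_0)\varphi_1,\varphi_1\rangle$.

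Since $\nabla^2U(\hat a)$ has negative directions, this can be as low as $-C|\varphi_1|^2t^{-2-\alpha}$. Hardy's inequality only bounds its integral by $-C'\|a\|_{\mathcal M}^{-2-\alpha}\|\varphi_1\|_{\mathcal D}^2$. That is a quadratic term competing with $\frac12\|\varphi_1\|_{\mathcal D}^2$, and ``$U\ge0$ where $|\varphi_1|\sim t$'' says nothing about it.

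The paper proves coercivity only when $\|a\|_{\mathcal M}$ is large, or on $[t_0,+\infty)$ with $t_0$ large. It then reaches arbitrary $a\in\Omega$, $x\in\mathcal X$ through the scaling $\gamma(t)=\lambda^{2/\alpha}\xi\bigl(1+(t-1)\lambda^{-(2+\alpha)/\alpha}\bigr)$, using $\Gamma_k(\lambda a)=\lambda^{1-k(2+\alpha)}\Gamma_k(a)$ to check that the rescaled motion keeps the required form (Section~\ref{s:app}).

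Alternatively, you can get coercivity for every $a$ directly by splitting at $T\simeq\varepsilon^{-2}\|\varphi_1\|_{\mathcal D}^2$.
For $t\ge T$ one has $|\varphi_1|\le 2\varepsilon t$, so Taylor is valid in a cone around $a$. The spare factor $t^{-\alpha}\le T^{-\alpha}$, relative to the Hardy weight $t^{-2}$, makes the negative part $O(\|\varphi_1\|_{\mathcal D}^{2-2\alpha})$.
On $[1,T]$, the bounds $U\ge0$, $U(\gamma_0)=O(t^{-\alpha})$ and $|\nabla U(\gamma_0)||\varphi_1|=O(t^{-1/2-\alpha}(1+\|\varphi_1\|_{\mathcal D}))$ give again $O(\|\varphi_1\|_{\mathcal D}^{2-2\alpha})$, up to a logarithm at $\alpha=1/2$.
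One of these two arguments has to be supplied.

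At $\alpha=1/2$ there is also a sign slip. The balance is $\mathcal M\widetilde\Gamma\,\ddot{(\log t)}=-\mathcal M\widetilde\Gamma\,t^{-2}=\nabla^2U(a)\Gamma_1t^{-2}$. This gives $\widetilde\Gamma=-\mathcal M^{-1}\nabla^2U(a)\Gamma_1=4\mathcal M^{-1}\nabla^2U(a)\mathcal M^{-1}\nabla U(a)$, which is also the $\log t$ coefficient of $\Gamma_2t^{1-2\alpha}$ as $\alpha\to1/2$.

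With your sign, the two $t^{-2}$ terms add up instead of cancelling, so the residual is $O(t^{-2})$ and $\int^\infty t|f|\,dt=\infty$. Your own bootstrap then puts a $\log t$ back into $\varphi_1$.

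So this computation does not yield the displayed $\widetilde\Gamma$, which has the opposite sign already for equal masses. The paper's proof of Proposition~\ref{prop:asymp_H} slips here too: it takes $\mathcal R_1=\nabla U(a)$ instead of $\nabla^2U(a)\Gamma_1$. Report what the balance gives rather than adjusting it to fit the statement.

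The rest of your asymptotic step is a valid alternative to the paper's. The paper stops $r_0$ at $m=P-1$ and reads off $\varphi\sim\Gamma_Pt^{1-P\alpha}$ from $\mathcal M\ddot\varphi=\mathcal R_mt^{-1-P\alpha}+o(t^{-1-P\alpha})$. That needs $\varphi/t=o(t^{-m\alpha})$, and hence a separate argument when $1/(2\alpha)\in\mathbb N$.

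You instead include $\Gamma_Pt^{1-P\alpha}$ in $\gamma_0$ and iterate $|\ddot\varphi_1|\lesssim t^{-2-\alpha}|\varphi_1|+|f|$ down from the Hardy bound. This treats all $\alpha<1/2$ uniformly and gives a quantitative remainder, of order $t^{1-(P+1)\alpha}$ up to a logarithm. For this you only need $\sqrt t\,f\in L^1$ and $\int_t^\infty|f|\lesssim t^{-(P+1)\alpha}$; your claim $tf\in L^1$ fails when $(P+1)\alpha\le1$.
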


\medskip

For $\alpha\in(0,2)$, let $b_m\in\mathcal{E}:=\{x\in\mathcal{X}:\|x\|_{\mathcal{M}}^2=1\}$ be a minimal central configuration for $U^\alpha$, namely,
\[
U^\alpha(b_m)=\min_{\mathcal{E}}U^\alpha.
\]
Set
\begin{equation}\label{e:beta}
\beta:=\sqrt[2+\alpha]{\frac{(2+\alpha)^2}{2}\,U^\alpha(b_m)}.
\end{equation}
Then, $r_0(t)=\beta b_m t^{\frac{2}{2+\alpha}}$ is a homothetic solution of \eqref{eq:NBP} (see also \cite{BarutelloFerrarioTerracini2008}).

\begin{thm}[Pure parabolic motions]\label{thm:par_alpha}
Let $\alpha\in(0,2)$. For any $x\in\mathcal{X}$ and any minimal normalized central configuration $b_m$, there exists a parabolic solution $\gamma$ of \eqref{eq:NBP} with $\gamma(1)=x$ of the form
\begin{equation}\label{eq:parabolic}
\gamma(t)=\beta b_m t^{\frac{2}{2+\alpha}}+O\!\left(t^{\frac{\alpha}{2+\alpha}}\right),
\qquad\text{as }t\to+\infty,
\end{equation}
where $\beta$ is given by \eqref{e:beta}.
\end{thm}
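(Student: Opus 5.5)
We look for the solution in the form $\gamma(t)=r_0(t)+\varphi(t)$, where $r_0(t)=\beta b_m t^{\frac{2}{2+\alpha}}$ is the homothetic parabolic solution, and obtain the perturbation $\varphi$ by minimizing a renormalized action, following the scheme of \cite{PolimeniTerracini}. Since $r_0$ solves \eqref{eq:NBP}, the Lagrangian of the perturbation can be renormalized by subtracting its value along $r_0$ and the linear term. This gives
\[
\mathcal{A}(\varphi)=\int_1^{+\infty}\Big(\tfrac12\|\dot\varphi\|_{\mathcal M}^2+U^\alpha(r_0+\varphi)-U^\alpha(r_0)-\langle\nabla U^\alpha(r_0),\varphi\rangle\Big)\,\ud t,
\]
defined on the space $D_0^{1,2}$ of curves with $\varphi(1)=x-\beta b_m$ and $\dot\varphi\in L^2$. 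Integrating by parts, the boundary term at infinity vanishes because $\dot r_0\sim t^{-\alpha/(2+\alpha)}$ and, by Hardy's inequality, $|\varphi(t)|=O(t^{1/2})$. This makes $\mathcal A$ an exact renormalization of the action.

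\textbf{Lower bound and coercivity.} Minimality of $b_m$ is the key ingredient. Write $U^\alpha$ in polar form $U^\alpha(\rho s)=\rho^{-\alpha}U^\alpha(s)$. The second variation of $U^\alpha$ at $r_0$ in the tangential directions is then $\ge0$, because $b_m$ minimizes $U^\alpha|_{\mathcal E}$. In the radial direction the quadratic form is $\tfrac{\alpha(\alpha+1)}{2}U^\alpha(b_m)\beta^{-\alpha-2}t^{-2}|\varphi|^2$, which is positive. Beyond the quadratic approximation, I would argue globally: the integrand is bounded below by $\tfrac12\|\dot\varphi\|^2-C|\varphi|^2/t^2$ plus nonnegative terms, and to show that $C<1/8$ I would compute the constant with \eqref{e:beta}. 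The cleaner route is to use convexity of $\rho\mapsto\rho^{-\alpha}$ together with $U^\alpha(s)\ge U^\alpha(b_m)$, so that $U^\alpha(r_0+\varphi)\ge\|r_0+\varphi\|^{-\alpha}U^\alpha(b_m)$. The potential part of $\mathcal A$ is then controlled from below by a one-dimensional Kepler-type renormalized functional, and that functional is nonnegative up to boundary terms. This gives coercivity on $D^{1,2}_0$ and weak lower semicontinuity, using Fatou and the fact that the singular term is $+\infty$ at collisions. A minimizer therefore exists.

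\textbf{Regularity of the minimizer.} The minimizer is collision-free for $t>1$, by Marchal's principle, which holds here since $d\ge2$ and averaging arguments are available for $\alpha\in(0,2)$. It is therefore a solution of \eqref{eq:NBP} with $\gamma(1)=x$.

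\textbf{Asymptotics.} This is the main obstacle. Finiteness of the action only gives $\varphi=O(t^{1/2})$, whereas \eqref{eq:parabolic} needs $O(t^{\alpha/(2+\alpha)})$. I would pass to McGehee-type logarithmic time $s=\log t$ and set $\varphi(t)=t^{\frac{2}{2+\alpha}}\psi(s)$. The Euler--Lagrange equation becomes an autonomous ODE for $\psi$ linearized around the equilibrium $0$. Its linearization has eigenvalues determined by the Hessian of $U^\alpha|_{\mathcal E}$ at $b_m$ (nonnegative) and by the radial mode. Minimality selects the stable branch, so $\psi\to0$ exponentially, with rate equal to the smallest stable exponent. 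I would then check that this rate is at least $\frac{2-\alpha}{2+\alpha}$, so that $\varphi=O(t^{\frac{2}{2+\alpha}-\frac{2-\alpha}{2+\alpha}})=O(t^{\frac{\alpha}{2+\alpha}})$.

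Degenerate (zero) tangential eigenvalues from rotations and possible degenerate minima need care. I would handle them with the energy identity: the limit energy is $0$, and the Lagrange–Jacobi relation $\ddot I=4h+(4-2\alpha)U^\alpha$ controls $I(t)=\|\gamma\|^2_{\mathcal M}$ directly. This gives $I(t)=\beta^2t^{\frac{4}{2+\alpha}}+O(t^{\frac{2}{2+\alpha}})$, from which the radial error bound follows. For the tangential part, I would use the convergence of the shape to the minimal configuration together with the minimizing property.
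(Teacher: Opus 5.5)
\textbf{Main gap: the asymptotic step.} This is where the exponent $\frac{\alpha}{2+\alpha}$ comes from, and your argument does not deliver it.

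In your variables $\varphi=t^{\frac{2}{2+\alpha}}\zeta(\log t)$, write $c=\frac{2-\alpha}{2+\alpha}$. The modes behave as follows.
\begin{itemize}
\item Each zero mode of the Hessian of $U^\alpha|_{\mathcal E}$ at $b_m$ has characteristic roots $0$ and $-c$. Such modes are always present (infinitesimal rotations), plus any degeneracy of the minimum.
\item The radial mode has roots $\frac{2\alpha}{2+\alpha}$ and $-1$.
\item The nondegenerate tangential modes decay strictly faster than $e^{-cs}$.
\end{itemize}
So the rate $t^{\frac{2}{2+\alpha}}e^{-c\log t}=t^{\frac{\alpha}{2+\alpha}}$ is produced exactly by the degenerate tangential directions you postpone, and your plan for them fails. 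Lagrange--Jacobi only controls $I=\|\gamma\|_{\mathcal M}^2$, i.e.\ the radial mode, which is harmless. Without prior control of the shape, its forcing $(\tilde U(\hat\gamma)-U^\alpha(b_m))I^{-\alpha/2}$ is only known to be $O(t^{-1})$, so your expansion of $I$ is unjustified. ``Convergence of the shape plus minimality'' is not an argument. Also, what excludes the root $0$ and the unstable roots is not minimality but the growth condition $\varphi\in\mathcal D^{1,2}_0$.

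The missing quantitative idea is as follows. Hardy gives $|\varphi(t)|\le\|\varphi\|_{\mathcal D}\sqrt t$, and the equation gives $\dot\varphi=O(t^{-1/2})$. Together these say $|(\zeta,\zeta')|=O(e^{-cs/2})$. This rate lies strictly inside the gap between the center spectrum $\{0\}$ and the stable spectrum $\{\mathrm{Re}\,\lambda\le -c\}$ of the (diagonalizable) linearization. A variation-of-constants bootstrap then gives $O(e^{-cs})$, i.e.\ $O(t^{\frac{\alpha}{2+\alpha}})$, whatever the degeneracy.

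The paper bypasses the spectral analysis. For $\eta=\|\psi\|_{\mathcal M}$ it uses $\ddot\eta\ge\langle\ddot\psi,\psi\rangle_{\mathcal M}/\eta$ together with the lower bound on $\nabla^2U^\alpha$ near $r_0$ given by minimality of $b_m$. This yields $\ddot\eta+\mu\,\eta/t^2\ge f$ with $\mu=\frac{2\alpha}{(2+\alpha)^2}<\frac14$. A maximum principle in $\mathcal D^{1,2}$ then gives $\eta=O(t^{\theta_-})$, where $\theta_-=\frac{\alpha}{2+\alpha}$ is the small root of $\theta(\theta-1)+\mu=0$. This is done first crudely, with $\mu$ perturbed and $f=0$, then sharply once $f=O(t^{-q})$ with $q>2-\theta_-$.

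\textbf{Existence part.} Your claim that the tangential second variation of $U^\alpha$ at $r_0$ is nonnegative is false. On directions tangent to $\mathcal E$ it equals $\|r_0\|_{\mathcal M}^{-2-\alpha}$ times (the Hessian of $U^\alpha|_{\mathcal E}$ at $b_m$ minus $\alpha U^\alpha(b_m)$). Hence it is only bounded below by $-\frac{2\alpha}{(2+\alpha)^2}t^{-2}$, with equality along rotations. This negative term is exactly what Hardy's inequality must absorb. Your first route (check $C<\frac18$) is the paper's argument, with $C=\frac{\alpha}{(2+\alpha)^2}$ and $\frac12-4C=\frac{(2-\alpha)^2}{2(2+\alpha)^2}>0$.

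The one-dimensional Kepler route does not give coercivity as stated, for two reasons.
\begin{itemize}
\item The potential part is not bounded below by the radial Kepler integrand. The difference is a negative term of order $\|\varphi_\perp\|^2/t^2$, where $\varphi_\perp$ is the component of $\varphi$ orthogonal to $b_m$. It is compensated only after integrating by parts against the kinetic cross term.
\item Replacing $\|\dot\gamma\|$ by $|\dot\rho|$ discards the tangential kinetic energy you need.
\end{itemize}

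Finally, your bounds do not make the boundary term $\langle\mathcal M\dot r_0,\varphi\rangle$ vanish at infinity, since $t^{\frac12-\frac{\alpha}{2+\alpha}}\to\infty$. This is harmless: fixed-end minimality on compact intervals, and hence Marchal's principle, follows directly from minimality of $\mathcal A$ under compactly supported variations.
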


\begin{rem}
The range $\alpha\in(0,2)$ in Theorem~\ref{thm:par_alpha} reflects the fact that our construction works as a low-order perturbation of the reference homothetic path $r_0(t)$.
If $\alpha\ge 2$, the associated reference growth is no longer suitable for the variational perturbation scheme.
\end{rem}

\medskip

To introduce hyperbolic-parabolic expansive solutions, fix $a\in\Delta\setminus\{0\}$ and define $i\sim_a j$ if and only if $a_i=a_j$. This induces a partition into clusters, and we denote by $K$ a generic cluster. For each cluster $K$, let $b^K$ be a minimal central configuration for the restricted potential
\[
U^\alpha_K(x):=\sum_{i<j,\ i,j\in K}\frac{m_i m_j}{|x_i-x_j|^\alpha},
\]
and set
\[
\beta^K:=\sqrt[2+\alpha]{\frac{(2+\alpha)^2}{2}\,U^\alpha_K(b^K)}.
\]
Collecting these into vectors (over the cluster decomposition) we write
\begin{equation}\label{e:b_a_cluster}
b_m=(b^{K_1},b^{K_2},\dots),
\qquad
\beta=(\beta^{K_1},\beta^{K_2},\dots),
\end{equation}
and we call $b_m$ a minimal central configuration of the $a$-clustered potential.

\begin{thm}[hyperbolic-parabolic motions]\label{thm:HP_alpha}
Let $\alpha\in(1/2,2)$. For any $x\in\mathcal{X}$, any collision configuration $a\in\Delta$ and any normalized minimal central configuration $b_m$ of the $a$-clustered potential, there exists a hyperbolic-parabolic motion $\gamma$ of \eqref{eq:NBP} with $\gamma(1)=x$ of the form
\begin{equation}\label{eq:hyperbolic_parabolic}
\gamma(t)=at+\beta b_m t^{\frac{2}{2+\alpha}}+O(t^\delta),
\qquad\text{as }t\to+\infty,
\end{equation}
where
\[
\delta=\max\left\{1-\alpha,\frac{\alpha}{2+\alpha}\right\}.
\]
Equivalently, $\delta=1-\alpha$ if $\alpha\in(1/2,\sqrt{3}-1)$ and $\delta=\frac{\alpha}{2+\alpha}$ if $\alpha\in(\sqrt{3}-1,2)$.
\end{thm}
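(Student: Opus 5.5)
We follow the renormalized action scheme of Polimeni and Terracini, adapted to degree $-\alpha$. We write the motion as a perturbation of the reference path
\[
r_0(t)=at+\beta b_m t^{\frac{2}{2+\alpha}},
\]
that is, $\gamma(t)=r_0(t)+\varphi(t)$. Clusters of $a$ contribute linear escape, while inside each cluster $K$ the term $\beta^K b^K t^{2/(2+\alpha)}$ is the homothetic parabolic solution of the restricted potential $U^\alpha_K$. Since $a_i=a_j$ within a cluster, $r_0$ satisfies
\[
\mathcal M\ddot r_0-\nabla U^\alpha(r_0)=-\nabla U^\alpha_{\rm ext}(r_0),
\]
where $U^\alpha_{\rm ext}=U^\alpha-\sum_K U^\alpha_K$ collects the inter-cluster interactions. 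These interactions decay like $t^{-\alpha}$, with forces of order $t^{-1-\alpha}$.

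The perturbation $\varphi$ will be found on $[1,+\infty)$, with $\varphi(1)=x-r_0(1)$, as a minimizer of the renormalized action
\[
\mathcal A(\varphi)=\int_1^{+\infty}\Big(\tfrac12\|\dot\varphi\|_{\mathcal M}^2+U^\alpha(r_0+\varphi)-U^\alpha(r_0)-\langle\nabla U^\alpha(r_0),\varphi\rangle+\langle \nabla U^\alpha_{\rm ext}(r_0),\varphi\rangle\Big)\,dt .
\]
The added terms remove the non-integrable linear part, and the Euler--Lagrange equation is exactly \eqref{eq:NBP} for $\gamma$. To carry this out we take the following steps.

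\textbf{Step 1: coercivity and lower semicontinuity on $D^{1,2}_0$-type spaces.} We work on the completion $\mathcal D$ of paths with $\dot\varphi\in L^2$. The intra-cluster part
\[
U_K(r_0+\varphi)-U_K(r_0)-\langle\nabla U_K(r_0),\varphi\rangle
\]
is handled by the homogeneity and minimality of $b^K$. We use the Polimeni--Terracini argument, namely a Hardy inequality $\int\|\varphi\|^2/t^2\le 4\int\|\dot\varphi\|^2$ together with the convexity-type estimate along minimal central configurations. This gives a bound from below by $-c\int\|\varphi\|^2 t^{-2}$ with $c<1/8$; here we need $\alpha<2$. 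The inter-cluster part $U_{\rm ext}(r_0+\varphi)-U_{\rm ext}(r_0)-\langle\nabla U_{\rm ext}(r_0),\varphi\rangle$ is a Taylor remainder. It is nonnegative up to $O(t^{-2-\alpha}\|\varphi\|^2)$ as long as $|\varphi|\ll t$, which is controlled by $|\varphi(t)|\le C t^{1/2}\|\dot\varphi\|_{L^2}$.

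\textbf{Step 2: the linear forcing term.} The forcing term $\langle\nabla U_{\rm ext}(r_0),\varphi\rangle$ must be a bounded functional. Since $|\nabla U_{\rm ext}(r_0)|\lesssim t^{-1-\alpha}$ and $|\varphi|\lesssim t^{1/2}$, we need $\int t^{-1/2-\alpha}\,dt<\infty$, which means $\alpha>1/2$. This is precisely why the theorem requires $\alpha>1/2$. With Steps 1 and 2 in place, the direct method yields a minimizer.

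\textbf{Step 3: regularity and absence of collisions.} To show the minimizer is collision-free for $t>1$ we use Marchal-type averaged variations, valid for $\alpha$ in this range as in Yu and in Ferrario--Terracini. Collisions and intra-cluster degeneracies at $t=\infty$ are excluded by the parabolic lower bound. This makes $\gamma$ a genuine solution.

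\textbf{Step 4: asymptotics, the main obstacle.} We must upgrade $\varphi\in\mathcal D$ to the pointwise bound $\varphi=O(t^\delta)$. We linearize the equation for $\varphi$:
\[
\mathcal M\ddot\varphi=\nabla^2U_K(r_0)\varphi+O(t^{-1-\alpha})+\text{h.o.t.}
\]
Within each cluster, the operator $\nabla^2U_K(\beta b^Kt^{2/(2+\alpha)})\sim t^{-2}\nabla^2U_K(b^K)$ is of Euler type. Its indicial roots are computed from the spectrum of the Hessian at the minimal central configuration, and minimality places them away from the growing range. The two sources of $\varphi$ are then as follows:
\begin{itemize}
\item the inter-cluster forcing $t^{-1-\alpha}$, which integrates twice to $t^{1-\alpha}$;
\item the homogeneous Euler modes, of which the largest admissible one is $t^{\alpha/(2+\alpha)}$ (the analogue of $t^{1/3}$ for $\alpha=1$).
\end{itemize}
This gives $\delta=\max\{1-\alpha,\alpha/(2+\alpha)\}$. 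Making this rigorous requires a bootstrap. We start from $|\varphi|\lesssim t^{1/2}$ and use energy-type (Lyapunov) estimates on the Euler system, or the variation-of-constants formula with weighted sup norms. We expect the delicate part to be controlling the nonlinear remainder uniformly while excluding resonances with the indicial exponents, particularly near $\alpha=\sqrt3-1$, where the two rates coincide.
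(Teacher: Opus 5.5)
Your Steps 1--3 are essentially the paper's existence argument, and they are sound.
\begin{itemize}
\item Your extra term $\langle\nabla U^\alpha_{\mathrm{ext}}(r_0),\varphi\rangle$ makes the linear part exactly $\langle\mathcal M\ddot r_0,\varphi\rangle$.
\item The intra-cluster lower bound with constant $\frac{\alpha}{(2+\alpha)^2}<\frac18$ is the paper's convexity estimate.
\item The restriction $\alpha>1/2$ enters through the inter-cluster linear term, exactly as you say.
\item Collisions are excluded by Marchal's principle on compact subintervals.
\end{itemize}
One small repair: your inter-cluster Taylor bound needs $|\varphi|\ll t$, which holds only for $t\gg\|\varphi\|_{\mathcal D}^2$. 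On the initial range you must use $U^\alpha_{\mathrm{ext}}\ge 0$, which costs only a subquadratic term.

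The genuine gap is Step~4, which carries the entire content of the $O(t^\delta)$ claim. There you count exponents but do not prove the bound. The resonance you worry about at $\alpha=\sqrt3-1$ is a symptom of a missing structural step. Suppose you treat $\mathcal M\ddot\varphi=\nabla^2U_K(r_0)\varphi+O(t^{-1-\alpha})+\dots$ as one system with only a size bound on the forcing. Then you cannot rule out that the $t^{-1-\alpha}$ term excites an internal mode with indicial root $\frac{\alpha}{2+\alpha}$. Since $\frac{\alpha}{2+\alpha}=1-\alpha$ at $\alpha=\sqrt3-1$, your scheme would give only $O(t^\delta\log t)$ there.

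The missing idea, which is how the paper proceeds, is to decouple cluster centers from internal motions.
\begin{itemize}
\item \emph{Cluster centers.} Set $c_K=M_K^{-1}\sum_{i\in K}m_i\gamma_i$. Intra-cluster forces cancel, so $\ddot c_K=O(t^{-1-\alpha})$. Since $\dot\gamma\to a$ (Remark~\ref{rem:dot_gamma_a}), $\dot c_K-a_K=O(t^{-\alpha})$. Hence $c_K-a_Kt=O(t^{1-\alpha})$ for $\alpha<1$, $O(\log t)$ for $\alpha=1$ and $O(1)$ for $\alpha>1$, with no Euler operator involved.
\item \emph{Internal motions.} The leading inter-cluster acceleration $-\alpha t^{-1-\alpha}\sum_{j\notin K}m_j a_{ij}/|a_{ij}|^{2+\alpha}$ is the same for every $i\in K$. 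It therefore cancels in the equation for $y_i=\gamma_i-c_K$. What remains is the parabolic problem of cluster $K$, perturbed only by a tidal term of order $t^{-2-\alpha+\frac{2}{2+\alpha}}$, whose effect is still $O(t^\delta)$.
\end{itemize}
The two rates thus come from decoupled subsystems, and no resonance can arise.

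For the internal part the paper also avoids any mode-by-mode spectral analysis.
\begin{itemize}
\item For $\eta=\|\psi\|_{\mathcal M}$ one has $\ddot\eta\ge\langle\ddot\psi,\psi\rangle_{\mathcal M}/\eta$. Minimality of $b^K$ bounds the Hessian form from below, giving $\ddot\eta+\mu t^{-2}\eta\ge f$ with $\mu$ at or near $\frac{2\alpha}{(2+\alpha)^2}<\frac14$.
\item A maximum principle in $\mathcal D^{1,2}$ then compares $\eta$ with $Ct^{\theta_-}+y_p$, where $\theta_-=\frac{\alpha}{2+\alpha}$ is exactly your worst admissible root. It applies because the form $\int(|\dot v|^2-\mu t^{-2}v^2)$ is coercive by Hardy when $\mu<\frac14$.
\item A first pass with $f=0$ gives $\eta=O(t^{\frac{\alpha}{2+\alpha}+\varepsilon})$. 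Inserted into the nonlinear remainder, this yields $f=O(t^{-q})$ with $q>\frac{4+\alpha}{2+\alpha}=2-\theta_-$, hence $\eta=O(t^{\frac{\alpha}{2+\alpha}})$.
\end{itemize}
Two passes are genuinely needed. From your starting bound $|\varphi|\lesssim t^{1/2}$ alone, the quadratic remainder is exactly of order $t^{-\frac{4+\alpha}{2+\alpha}}$, which is itself resonant with $\theta_-$. Finally, applying the maximum principle to $\psi_i-(c_K-a_Kt)$ requires $\dot c_K-a_K=O(t^{-\alpha})\in L^2$, which uses $\alpha>1/2$ a second time.
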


\begin{rem}
For $\alpha=1$, estimates of the type \eqref{eq:parabolic} improve the remainder bounds obtained in \cite{PolimeniTerracini} for pure parabolic motions.
\end{rem}

We emphasize that in this work we focus on \emph{half-entire} expansive motions, namely trajectories $\gamma(t)$ defined on a half-line $[t_0,+\infty)$ (and, by time reversal, on $(-\infty,t_0]$). The existence of expansive solutions defined for all $t\in\mathbb{R}$ remains an open problem. For other problems related to the $N$-body problem we refer, e.g., to \cite{MR1610784,paradela2022oscillatory}.

\subsection{Jacobi-Maupertuis geometry, Hamilton-Jacobi theory, and the boundary at infinity.}
The variational nature of our construction admits a natural geometric and dynamical interpretation through the Jacobi-Maupertuis (JM) metric and the associated Hamilton-Jacobi equation. 
For each energy level $h\ge 0$, the configuration space $\Omega$ can be endowed with the conformal Riemannian metric
\[
j_h^\alpha = 2\bigl(h + U^\alpha(x)\bigr)\, g_{\mathcal M},
\]
whose geodesics, after a suitable reparameterization, coincide with solutions of \eqref{eq:NBP} at energy $h$. In particular, free-time minimizers of the Lagrangian action correspond to geodesic rays for the JM distance, and, as shown in \cite{PolimeniTerracini}, expansive solutions minimizing the renormalized action functional therefore provide a distinguished class of such rays.

From a dynamical viewpoint, geodesic rays are the fundamental objects underlying the description of the boundary at infinity of $(\Omega, j_h^\alpha)$, either in the sense of Gromov or via Busemann (horofunction) compactifications. In the Newtonian case $\alpha=1$, it is known that collisionless asymptotic directions determine boundary points in the hyperbolic case, while parabolic motions associated with minimal central configurations give rise to distinguished Busemann functions. More generally, geodesic rays at nonnegative energy are necessarily expansive and exhibit a decomposition into linearly diverging cluster centers and internal parabolic dynamics.

Within this framework, the expansive solutions constructed in Theorems~\ref{thm:hyp_alpha_expansive_1}--\ref{thm:HP_alpha} can be interpreted as explicit geodesic rays of the JM metric for homogeneous potentials of degree $-\alpha$. Their asymptotic invariants -- namely the hyperbolic direction $a$, the parabolic limit shape, and the cluster decomposition -- provide a natural system of coordinates for the boundary at infinity, extending to the homogeneous setting the geometric picture previously known in the Newtonian case. It has to be noticed, however, that in the hyperbolic-parabolic case, two of rays associated with the same cluster decomposition do not keep a finite JM distance and hence they do not identify the same point of Gromov's boundary.

At the same time, this construction is closely related to the theory of viscosity solutions of the stationary Hamilton--Jacobi equation
\begin{equation}\label{eq:HJ_intro}
    H(x,\nabla u(x)) = h,
\end{equation}
where $H(x,p)=\frac12\|p\|_{\mathcal M^{-1}}^2 - U^\alpha(x)$ is the Hamiltonian of the system. In the weak KAM framework, global viscosity solutions of \eqref{eq:HJ_intro} can be obtained through the Lax-Oleinik semigroup, and are characterized by the existence of \emph{calibrating curves}, i.e.\ curves that realize equality in the variational principle
\[
u(\gamma(t)) - u(\gamma(s)) = \int_s^t \bigl( L(\gamma(\tau),\dot\gamma(\tau)) + h \bigr)\, \ud\tau,
\qquad t\ge s.
\]

The expansive motions obtained in this paper are precisely such calibrating curves: they are free-time minimizers of the action and hence generate global viscosity solutions of \eqref{eq:HJ_intro}. In particular, as already shown in \cite{BertiPolimeniTerracini} in the case $\alpha=1$, different dynamical regimes correspond to distinct classes of solutions:
\begin{itemize}
\item hyperbolic motions yield viscosity solutions with prescribed asymptotic linear drift, corresponding to Busemann-type functions associated with collisionless directions;
\item parabolic motions associated with minimal central configurations generate solutions whose calibrating curves converge, after rescaling, to central configurations;
\item hyperbolic-parabolic motions give rise to viscosity solutions encoding a mixed asymptotic structure, reflecting the cluster decomposition of the dynamics.
\end{itemize}

In this sense, expansive solutions provide a dynamical realization of characteristics of global solutions to the Hamilton-Jacobi equation, and the refined asymptotic expansions established in this paper translate into precise information on the structure of such solutions at infinity. Finally, we note that this boundary picture is intrinsically tied to the nonnegative energy regime: for negative energies, the Jacobi-Maupertuis metric has finite diameter and does not admit geodesic rays, so that no nontrivial boundary at infinity arises.

\subsection{Outline of the paper}
In Section~\ref{sec:variational_setting} we introduce the variational framework used to prove Theorems~\ref{thm:hyp_alpha_expansive_1}, \ref{thm:hyp_alpha_expansive_2}, \ref{thm:par_alpha} and \ref{thm:HP_alpha}, and we describe the functional space $\mathcal{D}_0^{1,2}(1,+\infty)$ in which the minimization procedure is carried out.
Section~\ref{sec:existence} is devoted to the existence part, namely the construction of minimizers of the renormalized action by the direct method in the calculus of variations.
Finally, in Section~\ref{sec:asymptotic_estimates} we derive refined asymptotic estimates for the corresponding expansive motions, completing the proof of Theorems~\ref{thm:hyp_alpha_expansive_1}, \ref{thm:hyp_alpha_expansive_2}, \ref{thm:par_alpha} and \ref{thm:HP_alpha}.


\section{The variational setting}\label{sec:variational_setting}

In this section, we adapt, to our setting, the variational framework developed in \cite{PolimeniTerracini} to establish the existence of hyperbolic, parabolic, and hyperbolic-parabolic motions. This approach is based on the minimization of a renormalized Lagrangian action. Since we consider a homogeneous Newtonian potential, the renormalization of the action is not always required and depends on the value of the homogeneity parameter $\alpha$. Accordingly, we distinguish between different definitions of the action used in the minimization arguments, depending on $\alpha$ and on the type of expansive motion under consideration.

For the $N$-body problem, the Hamiltonian $H$ is defined on $\Omega \times \mathbb{R}^{dN}$ by $H(x, p) = \frac{1}{2} \|p\|_{\mathcal{M}^{-1}}^2 - U^\alpha(x)$, where $\|\cdot\|_{\mathcal{M}^{-1}}$ denotes the dual norm associated with the mass inner product. The corresponding Lagrangian is given by
\begin{equation}\label{eq:lagrangian}
    L(x, v) = \frac{1}{2} \|v\|_\mathcal{M}^2 + U^\alpha(x).
\end{equation}

Given two configurations $x, y \in \mathcal{X}$ and $T > 0$, we consider the set of admissible paths
\[
\mathcal{C}(x, y, T) = \left\{ \gamma : [a, b] \rightarrow \mathcal{X} \,:\, \gamma \text{ is absolutely continuous},\, \gamma(a) = x,\, \gamma(b) = y,\, b - a = T \right\},
\]
as well as the free-time path space
\[
\mathcal{C}(x, y) = \bigcup_{T > 0} \mathcal{C}(x, y, T).
\]

For any curve $\gamma \in \mathcal{C}(x, y, T)$, the Lagrangian action is defined by
\[
    \mathcal{A}_L(\gamma) = \int_{a}^{b} L(\gamma, \dot{\gamma})\, \mathrm{d}t 
    = \int_{a}^{b} \left( \frac{1}{2} \|\dot{\gamma}\|_\mathcal{M}^2 + U^\alpha(\gamma) \right) \mathrm{d}t.
\]

By Hamilton's Principle of Least Action, if a curve $\gamma \in \mathcal{C}(x, y, T)$ minimizes the Lagrangian action, then it satisfies Newton's equations at every time $t \in [a, b]$ such that $\gamma(t)$ is collision-free. Nevertheless, since there exist curves with isolated collisions and finite action (see \cite{Poincare_SolutionsPeriodiques}), action-minimizing paths do not automatically correspond to genuine solutions. Marchal's Principle (Theorem \ref{thm_marchal}) overcomes this difficulty by guaranteeing that minimizers are collision-free in the interior of the time interval, thus justifying the variational approach.

The strategy of proving Marchal's Principle through averaged variations was originally introduced by Marchal \cite{Marchal_MethodOfMinimization}, and later refined and completed by Chenciner \cite{Chenciner}, and Ferrario and Terracini \cite{FerrarioTerracini} (see also \cite{BarutelloFerrarioTerracini2008} for a study on Sundman-type asymptotic estimates for collision solutions).

\begin{thm}[Marchal \cite{Marchal_MethodOfMinimization}, Chenciner \cite{Chenciner}, Ferrario and Terracini \cite{FerrarioTerracini}]\label{thm_marchal}
Assume $d\geq 2$, let $x, y \in \mathcal{X}$, and let $\gamma \in \mathcal{C}(x, y)$ be defined on some interval $[a, b]$. If
\[
    \mathcal{A}_L(\gamma) = \min \left\{ \mathcal{A}_L(\sigma)\ :\ \sigma \in \mathcal{C}(x, y, b-a) \right\},
\]
then $\gamma(t) \in \Omega$ for all $t \in (a, b)$.
\end{thm}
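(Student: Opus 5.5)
The plan is the averaged-variations strategy of Marchal, in the form given by Ferrario and Terracini, adapted to the homogeneity $-\alpha$. We argue by contradiction: assume $\gamma$ is a fixed-time minimizer on $[a,b]$ and that $\gamma(t_0)\in\Delta$ for some $t_0\in(a,b)$.

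\textbf{Step 1: structure of the collision set.} Finite action forces the set of collision instants to have measure zero. Interior collisions are therefore isolated in the sense that minimality gives Newton's equations on each complementary interval. Near $t_0$, the energy is constant on each collision-free interval, and one derives Sundman-type estimates: for the colliding cluster $K$ the moment of inertia satisfies $I_K(t)\sim c|t-t_0|^{4/(2+\alpha)}$. Moreover, the normalized configuration accumulates on central configurations of $U^\alpha_K$ (as in \cite{BarutelloFerrarioTerracini2008}). After rescaling, the cluster motion near $t_0$ is therefore approximated by a homothetic-parabolic collision-ejection path. Thanks to the blow-up $\lambda^{-2/(2+\alpha)}\gamma(t_0+\lambda t)$ and the local-minimality property, it suffices to compare with a blow-up limit. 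This limit is a minimizer of the $K$-body problem on compact sets, of the form $\xi^\pm |t|^{2/(2+\alpha)}$, with $\xi^\pm$ central configurations, one for the collision and one for the ejection.

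\textbf{Step 2: averaged variation.} For a unit vector $\xi\in\mathbb{R}^d$, we move one body $i\in K$ (or, following Ferrario--Terracini, one body of the cluster relative to the rest) by $\varepsilon\varphi(t)\xi$, where $\varphi$ is a fixed cutoff equal to $1$ near $t_0$. We then average the action variation over $\xi\in S^{d-1}$. The kinetic term changes by $O(\varepsilon^2)$. For the potential, the key identity is that for $d\ge2$ and $\alpha<d$,
\[
\int_{S^{d-1}}\frac{d\sigma(\xi)}{|y+\varepsilon\xi|^\alpha}\le \frac{|S^{d-1}|}{|y|^\alpha},
\]
by superharmonicity of $|\cdot|^{-\alpha}$ when $\alpha\le d-2$. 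In general we use instead the explicit estimate that the average is $\le C\varepsilon^{-\alpha}$ and strictly smaller than $|y|^{-\alpha}$ for $|y|\ll\varepsilon$.

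Integrating in time along the parabolic profile $|y(t)|\sim |t-t_0|^{2/(2+\alpha)}$, the gain from removing the singular contribution near $t_0$ is of order $\varepsilon^{-\alpha}\cdot\varepsilon^{(2+\alpha)/2}$, that is, $\varepsilon^{1-\alpha/2}$. This comes from the time window where $|y|\lesssim\varepsilon$. The cost is $O(\varepsilon^2)$ plus the interaction terms with the bodies outside $K$, which are smooth and also $O(\varepsilon^2)$ after averaging, since the first-order terms average to zero. Since $1-\alpha/2<2$, some direction $\xi$ strictly lowers the action for small $\varepsilon$, contradicting minimality.

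\textbf{Main obstacle.} The hard step is making the averaged comparison rigorous for all $\alpha>0$ and $d\ge2$, in particular when $\alpha\ge d-2$, where superharmonicity fails. It is also delicate for clusters with more than two bodies, where the variation must decrease the sum over all pairs simultaneously. Here I would follow Ferrario--Terracini. First, reduce via blow-up to the parabolic homothetic (or double-parabolic) model. Then compute the averaged action difference explicitly as $\int_{\mathbb{R}}\int_{S^{d-1}}\bigl(|s^{2/(2+\alpha)}\xi^\pm+\xi|^{-\alpha}-|s^{2/(2+\alpha)}\xi^\pm|^{-\alpha}\bigr)\,d\sigma\,ds<0$, which is a convergent integral. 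This is the Marchal--Chenciner computation generalized to exponent $\alpha$. Finally, handle the remaining pairs in $K$ by moving the body that minimizes the relevant interaction, or by using the rotating-circle variation of Chenciner, which works in the plane and hence for every $d\ge2$.
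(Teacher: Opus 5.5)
The paper gives no proof of this statement. It is quoted from Marchal, Chenciner and Ferrario--Terracini, and your outline (isolated collisions, Sundman asymptotics, blow-up to a homothetic collision--ejection, averaged variations) follows the architecture of those works.

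\textbf{The gap is in Step~2.} That is where the sign is decided, and your argument there is only correct when $\alpha\le d-2$. In that range $|\cdot|^{-\alpha}$ is superharmonic, the averaged potential is pointwise no larger than the original one, and your comparison of a gain of order $\varepsilon^{1-\alpha/2}$ against a cost $O(\varepsilon^2)$ does close the argument. This covers $d\ge 4$, and $d=3$ with $\alpha\le 1$.

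For $\alpha>d-2$, which includes every $\alpha>0$ in the plane, your displayed inequality is false and the cost is not $O(\varepsilon^2)$. Indeed, for $|y|\ge 2\varepsilon$,
\[
\frac{1}{|S^{d-1}|}\int_{S^{d-1}}\frac{\mathrm{d}\sigma(\xi)}{|y+\varepsilon\xi|^{\alpha}}=\frac{1}{|y|^{\alpha}}+\frac{\alpha(\alpha+2-d)}{2d}\,\frac{\varepsilon^{2}}{|y|^{\alpha+2}}+O\Bigl(\frac{\varepsilon^{4}}{|y|^{\alpha+4}}\Bigr).
\]
Integrating the positive second term along $|y(t)|\sim|t-t_0|^{2/(2+\alpha)}$ over $|t-t_0|\gtrsim\varepsilon^{(2+\alpha)/2}$ gives a loss of order $\varepsilon^{2}\cdot\varepsilon^{-(2+\alpha)/2}=\varepsilon^{1-\alpha/2}$. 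This is the same order as the gain in the collision window.

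This is forced by scaling. Along a blow-up subsequence, the averaged variation equals $\varepsilon^{1-\alpha/2}\Delta_{\mathrm{model}}+o(\varepsilon^{1-\alpha/2})$, where $\Delta_{\mathrm{model}}$ is the averaged integral on the blow-up profile written in your last paragraph. So the inequality $1-\alpha/2<2$ decides nothing. For $\alpha>d-2$ the proof reduces entirely to $\Delta_{\mathrm{model}}<0$ for every incoming/outgoing pair $\xi^\pm$. You assert this (``the Marchal--Chenciner computation generalized to exponent $\alpha$'') without proving it, and it is precisely the nontrivial core of Marchal's planar argument and of Ferrario--Terracini's extension. In the simplest geometry (half-line profile, circle in a plane containing the collision direction) it amounts to
\[
\int_0^{+\infty}\bigl(F_\alpha(r)-r^{-\alpha}\bigr)\,r^{\alpha/2}\,\mathrm{d}r<0,\qquad F_\alpha(r)=\frac{1}{2\pi}\int_0^{2\pi}\frac{\mathrm{d}\theta}{|r-e^{i\theta}|^{\alpha}},
\]
which requires an actual computation, not an order count.

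\textbf{Three smaller points.}

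First, your scheme needs $\alpha<2$: it uses a finite-action Sundman profile, a convergent model integral and a positive exponent $1-\alpha/2$. For $\alpha\ge 2$ the statement follows instead from the strong-force observation. Near a collision instant $|\sigma_{ij}(t)|\le C|t-t_0|^{1/2}$, so $U(\sigma(t))\ge c|t-t_0|^{-\alpha/2}\notin L^1$, and a finite-action minimizer cannot collide.

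Second, a closed null set of collision instants need not be discrete, so measure zero does not give isolated collisions. Isolatedness, and hence the Sundman estimates you use, comes from two ingredients:
\begin{itemize}
\item constancy of the energy across collisions for fixed-time minimizers, obtained from time-reparametrization variations;
\item the Lagrange--Jacobi identity $\ddot I=4h+(4-2\alpha)U$, applied to the colliding cluster up to bounded terms.
\end{itemize}

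Third, for clusters with more than two bodies you do not need to choose a special body. Moving one body $i\in K$ changes only the pairs $(i,j)$. Pairs with $j\notin K$ contribute $O(\varepsilon^2)$ after averaging. Each pair with $j\in K$ contributes a Kepler-type model integral along the profile $(\xi^\pm_i-\xi^\pm_j)|t|^{2/(2+\alpha)}$. Splitting $t<0$ and $t>0$, everything reduces to negativity of the averaged half-line Kepler integral for an arbitrary direction, which is again the missing computation.
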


To state the Renormalized Action Principle, we introduce the following notion:

\begin{defn}\label{def:free-time_minimizers}
A curve $\gamma: I \to \mathcal{X}$ is a \emph{free-time minimizer} for the Lagrangian action at energy $h$ if, for all intervals $[a, b], [a', b'] \subset I$ and all curves $\sigma: [a', b'] \to \mathcal{X}$ with $\gamma(a) = \sigma(a')$ and $\gamma(b) = \sigma(b')$, we have:
\[
    \int_a^b L(\gamma, \dot{\gamma})\, \mathrm{d}t + h(b - a) \leq \int_{a'}^{b'} L(\sigma, \dot{\sigma})\, \mathrm{d}t + h(b' - a').
\]
\end{defn}

The main strategy to prove the existence of expansive motions for the $N$-body problem, as free-time minimizers, is to consider solutions of the form:
\[
    \gamma(t) = r_0(t) + \varphi(t) + x - r_0(1),
\]
where $r_0$ is a fixed reference path, $\varphi$ is a perturbation in a suitable function space, and $x$ is the initial configuration. 

Specifically, we work with the space of perturbations:
\begin{displaymath}\label{eq:space}
    \mathcal{D}_0^{1,2}(1,+\infty) = \left\{ \varphi \in H^1_{\mathrm{loc}}([1, +\infty), \mathcal{X}) : \varphi(1) = 0,\ \int_1^{+\infty} \|\dot{\varphi}(t)\|_\mathcal{M}^2\, \mathrm{d}t < +\infty \right\},
\end{displaymath}
endowed with the norm:
\[
    \|\varphi\|_{\mathcal{D}} = \left( \int_1^{+\infty} \|\dot{\varphi}(t)\|_\mathcal{M}^2\, \mathrm{d}t \right)^{1/2}.
\]

\begin{prop}[Cfr. \cite{BDFT}]
The space $\mathcal{D}_0^{1,2}(1,+\infty)$ is a Hilbert space, and $C_c^\infty(1, +\infty)$ is dense in it.
\end{prop}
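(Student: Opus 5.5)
The plan is to reduce both claims to standard facts about weighted Sobolev spaces on a half-line, with a Hardy inequality as the single analytic tool. We work with the space $\mathcal{X}$ equipped with the mass norm, which is finite-dimensional, so every statement can be checked componentwise on $\mathcal{X}$-valued functions.

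\emph{Step 1: the norm is a genuine norm, together with a pointwise bound.} For $\varphi\in\mathcal{D}_0^{1,2}(1,+\infty)$ we have $\varphi(t)=\int_1^t\dot\varphi$, since $\varphi$ is absolutely continuous and $\varphi(1)=0$. Cauchy--Schwarz then gives
\[
\|\varphi(t)\|_{\mathcal M}\le \sqrt{t-1}\,\|\varphi\|_{\mathcal D}.
\]
Hence $\|\varphi\|_{\mathcal D}=0$ forces $\varphi\equiv0$, and $\|\cdot\|_{\mathcal D}$ is a norm induced by the inner product $\int_1^\infty\langle\dot\varphi,\dot\psi\rangle_{\mathcal M}\,\ud t$. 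As a further control we also record the Hardy inequality
\[
\int_1^\infty \frac{\|\varphi\|_{\mathcal M}^2}{t^2}\,\ud t\le 4\|\varphi\|_{\mathcal D}^2 .
\]
We prove it first for smooth $\varphi$ with $\varphi(1)=0$: integrate by parts $\int \|\varphi\|^2\,\ud(-1/t)$ and apply Cauchy--Schwarz. The boundary term at infinity is harmless, because $\|\varphi(T)\|^2/T$ stays bounded by the pointwise estimate; to justify dropping it, apply the argument on $[1,T]$ and let $T\to\infty$.

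\emph{Step 2: completeness.} Let $(\varphi_n)$ be Cauchy in $\mathcal D$. Then $\dot\varphi_n\to g$ in $L^2((1,\infty),\mathcal X)$. Define $\varphi(t)=\int_1^t g$. The pointwise bound of Step 1 shows that $\varphi_n\to\varphi$ uniformly on compact sets. Moreover $\varphi\in H^1_{\mathrm{loc}}$, $\varphi(1)=0$, $\dot\varphi=g$, and $\|\varphi_n-\varphi\|_{\mathcal D}\to0$. This proves that the space is Hilbert; equivalently, $\varphi\mapsto\dot\varphi$ is an isometric isomorphism onto $L^2$.

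\emph{Step 3: density of $C_c^\infty(1,+\infty)$.} This is the main obstacle. Two difficulties arise: functions in the space need not tend to zero at infinity (they can grow like $\sqrt t$), and they vanish only at the single point $1$. We therefore proceed by cut-offs, in three substeps.

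(a) \emph{Truncation at infinity.} Take $\eta_R(t)=\eta(t/R)$ with $\eta=1$ on $[0,1]$, $\eta=0$ on $[2,\infty)$ and $|\eta'|\le 2$. Then
\[
\|\eta_R\varphi-\varphi\|_{\mathcal D}\le \|(1-\eta_R)\dot\varphi\|_{L^2}+\frac{2}{R}\Bigl(\int_R^{2R}\|\varphi\|^2\Bigr)^{1/2}.
\]
The first term tends to zero by dominated convergence. The second term is at most $4\bigl(\int_R^{\infty}\|\varphi\|^2/t^2\bigr)^{1/2}$, which tends to zero by the Hardy inequality. This is exactly where Hardy is needed.

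(b) \emph{Pushing the support away from $1$.} For compactly supported $\varphi$, set $\varphi_\varepsilon(t)=\varphi\bigl(1+(t-1-\varepsilon)_+\bigr)$, a shift that vanishes on $[1,1+\varepsilon]$. Its $\mathcal D$-distance to $\varphi$ tends to zero by continuity of translations in $L^2$ applied to $\dot\varphi$ extended by zero. Alternatively, one may multiply by a cut-off near $1$, using $\|\varphi(t)\|\le\sqrt{t-1}\,\|\dot\varphi\|_{L^2(1,t)}$ to control the derivative of the cut-off.

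(c) \emph{Smoothing.} Finally, mollify the resulting compactly supported $H^1$ function, which is supported in $[1+\varepsilon,2R]$. The mollifications converge in $H^1$, hence in $\mathcal D$, and stay supported in $(1,+\infty)$.

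Combining (a), (b) and (c) yields the density of $C_c^\infty(1,+\infty)$ in $\mathcal D_0^{1,2}(1,+\infty)$.
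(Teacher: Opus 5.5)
Your proof is correct. The paper does not prove this proposition; it only refers to \cite{BDFT}. Your write-up supplies the standard self-contained argument:
\begin{itemize}
\item completeness, via the isometry $\varphi\mapsto\dot\varphi$ of $\mathcal{D}_0^{1,2}(1,+\infty)$ onto $L^2((1,+\infty);\mathcal{X})$, with inverse $g\mapsto\int_1^t g$;
\item density, via truncation at infinity, a shift away from $t=1$, and mollification.
\end{itemize}

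Two small presentational points concern your Hardy inequality.

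\emph{The boundary term.} On $[1,T]$ the term $-\|\varphi(T)\|_{\mathcal M}^2/T$ can be dropped because it is nonpositive, not because it is bounded.

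\emph{Extension beyond smooth functions.} Step 3(a) applies Hardy to an arbitrary element of the space. So you must not extend it from smooth functions using the density you are proving. Instead, note that the integration by parts on $[1,T]$ holds directly for absolutely continuous $\varphi$, or approximate in $H^1(1,T)$.

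You can also avoid Hardy altogether in (a). Cauchy--Schwarz on $[R_0,t]$ gives $\limsup_{t\to+\infty}\|\varphi(t)\|_{\mathcal M}^2/t\le\int_{R_0}^{+\infty}\|\dot\varphi\|_{\mathcal M}^2\,\ud t$ for every $R_0$. Hence $\|\varphi(t)\|_{\mathcal M}=o(\sqrt t)$, which already forces $R^{-2}\int_R^{2R}\|\varphi\|_{\mathcal M}^2\,\ud t\to 0$.
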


\begin{prop}[\textit{Hardy inequality}, Cfr. \cite{BDFT}]\label{dis_hardy}
For every $\varphi \in \mathcal{D}_0^{1,2}(1, +\infty)$, the following inequalities hold:
\begin{displaymath}
    \int_1^{+\infty} \frac{\|\varphi(t)\|_\mathcal{M}^2}{t^2}\, \mathrm{d}t \leq 4 \int_1^{+\infty} \|\dot{\varphi}(t)\|_\mathcal{M}^2\, \mathrm{d}t,
\end{displaymath}
\begin{equation}\label{dis_space_D012}
    \sup_{t \in (1,+\infty)} \frac{\|\varphi(t)\|_\mathcal{M}^2}{t - 1} \leq \int_1^{+\infty} \|\dot{\varphi}(t)\|_\mathcal{M}^2\, \mathrm{d}t.
\end{equation}
\end{prop}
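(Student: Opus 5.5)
Both inequalities are one-dimensional and reduce to scalar estimates applied to $\|\varphi(t)\|_{\mathcal M}$. By the density statement in the previous proposition, I will first prove them for $\varphi\in C_c^\infty(1,+\infty)$ and then pass to the limit. The passage to the limit uses only that convergence in $\|\cdot\|_{\mathcal D}$ implies locally uniform convergence, which will itself follow from the second inequality (or from $\varphi(1)=0$ and Cauchy--Schwarz on bounded intervals). Fatou's lemma then handles the left-hand sides.

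\emph{Second inequality.} For $t>1$, since $\varphi(1)=0$, write $\varphi(t)=\int_1^t\dot\varphi(s)\,\ud s$. The Cauchy--Schwarz inequality in the mass inner product gives
\[
\|\varphi(t)\|_{\mathcal M}^2\le \Bigl(\int_1^t\|\dot\varphi\|_{\mathcal M}\,\ud s\Bigr)^2\le (t-1)\int_1^t\|\dot\varphi\|_{\mathcal M}^2\,\ud s\le (t-1)\|\varphi\|_{\mathcal D}^2 .
\]
Dividing by $t-1$ and taking the supremum gives \eqref{dis_space_D012}. This argument holds directly for every $\varphi\in H^1_{\mathrm{loc}}$, with no density step needed.

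\emph{Hardy inequality.} For compactly supported smooth $\varphi$, set $f(t)=\|\varphi(t)\|_{\mathcal M}^2$. Integrate by parts using $-\frac{\ud}{\ud t}t^{-1}=t^{-2}$:
\[
\int_1^{+\infty}\frac{f}{t^2}\,\ud t=\Bigl[-\frac{f}{t}\Bigr]_1^{+\infty}+\int_1^{+\infty}\frac{2\langle\varphi,\dot\varphi\rangle_{\mathcal M}}{t}\,\ud t .
\]
The boundary terms vanish because $\varphi(1)=0$ and $\varphi$ has compact support. Applying Cauchy--Schwarz to the last integral bounds it by
\[
2\Bigl(\int_1^{+\infty}\frac{\|\varphi\|_{\mathcal M}^2}{t^2}\,\ud t\Bigr)^{1/2}\|\varphi\|_{\mathcal D}.
\]
Calling $I$ the left-hand side, we get $I\le 2\sqrt I\,\|\varphi\|_{\mathcal D}$, hence $I\le 4\|\varphi\|_{\mathcal D}^2$, since $I$ is finite for compactly supported $\varphi$.

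For general $\varphi\in\mathcal D^{1,2}_0$, take $\varphi_n\in C_c^\infty$ with $\varphi_n\to\varphi$ in $\mathcal D$. By the second inequality, $\varphi_n\to\varphi$ pointwise, so Fatou's lemma gives the Hardy inequality for $\varphi$.

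The only delicate point is justifying the vanishing boundary term and the finiteness of $I$ before dividing by $\sqrt I$; the density argument handles both. Alternatively, one can avoid density by working on $[1,T]$, where the boundary term $-f(T)/T\le 0$ has the favorable sign, and then letting $T\to\infty$.
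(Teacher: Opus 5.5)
Your proof is correct: the pointwise bound is Cauchy--Schwarz applied to $\varphi(t)=\int_1^t\dot\varphi$, and the Hardy bound follows by integrating $\|\varphi\|_{\mathcal M}^2$ against $-\frac{\ud}{\ud t}t^{-1}$, justified either by density and Fatou or by truncating to $[1,T]$, where the boundary term $-f(T)/T\le 0$ has the right sign. The paper gives no proof of its own and only cites \cite{BDFT}; your argument is the standard proof of both facts, and the truncation version has the advantage of not needing the density result and of giving, with $t^{-2-\varepsilon}$ in place of $t^{-2}$, the constant $4/(1+\varepsilon)^2$ of Proposition~\ref{prop:compact_emb} as well.
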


Proposition \ref{dis_hardy} can also be extended as the following proposition, which shows that the space $\mathcal{D}_0^{1,2}(1,+\infty)$ is compactly embedded in a space $L^2(1,+\infty)$ equipped with proper weights. 
We define, for all $\varepsilon\ge 0$, the space $L^2(1,+\infty;\ud t/t^{2+\varepsilon})$ as the space of functions $\varphi$ such that
\[
\int_1^{+\infty} \frac{\|\varphi(t)\|_{\mathcal{M}}^2}{t^{2+\varepsilon}}\, \ud t < +\infty.
\]

\begin{prop}[Cfr. Berti, Polimeni, Terracini 2025 \cite{BertiPolimeniTerracini}]
\label{prop:compact_emb}
    For all $\varepsilon \ge0$ and for all $\varphi\in\mathcal{D}_0^{1,2}(1,+\infty)$, the following Hardy-type inequality holds
    \begin{equation}\label{hardy_compact}
        \int_{1}^{+\infty} \frac{\|\varphi(t)\|_\mathcal{M}^2}{t^{2+\varepsilon}}\ \ud t \leq \frac{4}{(1+\varepsilon)^2} \int_{1}^{+\infty} \|\dot{\varphi}(t)\|_\mathcal{M}^2\ \ud t,
    \end{equation}
    that is, the space $\mathcal{D}_0^{1,2}(1,+\infty)$ is continuously embedded in the space $L^2(1,+\infty;\ud t/t^{2+\varepsilon})$. Besides, $\mathcal{D}_0^{1,2}(1,+\infty)$ is compactly embedded in the space $L^2(1,+\infty; \ud t/t^{2+\varepsilon})$ for all $\varepsilon>0$.
\end{prop}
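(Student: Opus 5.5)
The inequality \eqref{hardy_compact} will follow from a one-dimensional weighted Hardy argument applied to $\varphi$, and the compactness from splitting $(1,+\infty)$ into a bounded piece and a tail.

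\emph{Step 1: the inequality.} By the density of $C_c^\infty(1,+\infty)$ in $\mathcal{D}_0^{1,2}(1,+\infty)$ and Fatou's lemma, it suffices to take $\varphi$ smooth with compact support in $(1,+\infty)$. Set $w(t)=t^{-(2+\varepsilon)}$ and note that
\[
w(t)=-\frac{1}{1+\varepsilon}\frac{\ud}{\ud t}t^{-(1+\varepsilon)}.
\]
Integrating by parts, the boundary terms vanish, because $\varphi(1)=0$ and $\varphi$ has compact support. This gives
\[
\int_1^{+\infty}\frac{\|\varphi\|_{\mathcal M}^2}{t^{2+\varepsilon}}\,\ud t=\frac{2}{1+\varepsilon}\int_1^{+\infty}\frac{\langle\varphi,\dot\varphi\rangle_{\mathcal M}}{t^{1+\varepsilon}}\,\ud t.
\]
Apply Cauchy--Schwarz to the right-hand side, bounding the integrand by $\frac{\|\varphi\|_{\mathcal M}}{t^{1+\varepsilon/2}}\cdot\frac{\|\dot\varphi\|_{\mathcal M}}{t^{\varepsilon/2}}$ and then using $t^{-\varepsilon/2}\le 1$. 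Writing $I$ for the left-hand side, this yields $I\le \frac{2}{1+\varepsilon}I^{1/2}\|\varphi\|_{\mathcal D}$. Dividing by $I^{1/2}$ and squaring gives \eqref{hardy_compact}. Continuity of the embedding is then immediate.

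\emph{Step 2: compactness for $\varepsilon>0$.} Let $\varphi_n\rightharpoonup0$ weakly in $\mathcal{D}_0^{1,2}(1,+\infty)$, so that $\|\varphi_n\|_{\mathcal D}\le C$. Fix $T>1$ and split the weighted integral at $T$.

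For the tail, use \eqref{dis_space_D012}, namely $\|\varphi_n(t)\|_{\mathcal M}^2\le C^2(t-1)$. This bounds
\[
\int_T^\infty\frac{\|\varphi_n\|_{\mathcal M}^2}{t^{2+\varepsilon}}\,\ud t\le C^2\int_T^\infty t^{-1-\varepsilon}\,\ud t=\frac{C^2}{\varepsilon}T^{-\varepsilon},
\]
which is small uniformly in $n$ once $T$ is large. This is the only place where $\varepsilon>0$ is used.

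For the bounded piece, the restrictions of $\varphi_n$ to $[1,T]$ are bounded in $H^1(1,T)$, with $\varphi_n(1)=0$. By Rellich (or Arzelà--Ascoli, since they are uniformly $1/2$-Hölder), a subsequence converges uniformly on $[1,T]$. The weak limit identifies the limit as $0$: evaluation at a point is a continuous linear functional, by \eqref{dis_space_D012}. Hence the integral over $[1,T]$ tends to $0$.

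Combining the two pieces with a diagonal argument gives $\varphi_n\to0$ strongly in $L^2(1,+\infty;\ud t/t^{2+\varepsilon})$, so the embedding maps weakly convergent sequences to strongly convergent ones, i.e.\ it is compact.

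The main delicate point is the uniform tail control. It is exactly here that the case $\varepsilon=0$ fails, since the tail bound becomes the divergent $\int_T^\infty t^{-1}\,\ud t$. Accordingly, for $\varepsilon=0$ only the continuous embedding is claimed. Everything else is routine.
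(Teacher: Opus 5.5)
Your proof is correct. Step 1 (density of $C_c^\infty$, integration by parts against $t^{-(2+\varepsilon)}=-\frac{1}{1+\varepsilon}\frac{\ud}{\ud t}t^{-(1+\varepsilon)}$, Cauchy--Schwarz, then Fatou) gives \eqref{hardy_compact} for every $\varepsilon\ge 0$. Step 2 correctly uses the uniform tail bound coming from \eqref{dis_space_D012} to get compactness when $\varepsilon>0$.

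The paper gives no proof of this proposition and simply defers to \cite{BertiPolimeniTerracini}, so there is no argument to compare against. One simplification: the Arzel\`a--Ascoli step and the diagonal extraction are unnecessary. Weak convergence already gives $\varphi_n(t)\to 0$ for every $t$, and \eqref{dis_space_D012} gives the integrable majorant $\|\varphi_n(t)\|_{\mathcal M}^2\,t^{-2-\varepsilon}\le C^2 t^{-1-\varepsilon}$. Dominated convergence then yields strong convergence in $L^2(1,+\infty;\ud t/t^{2+\varepsilon})$ for the whole sequence.
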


\noindent{\bf The variational problem in the space of perturbations.}
As mentioned earlier, we study expansive motions with a reference path $r_0(t)$ which depends on the subclass of motion under consideration and, in the hyperbolic case, on the homogeneity parameter $\alpha$.

More precisely, for an expansive motion $\gamma:[1,+\infty)\to\mathcal{X}$ of the form
\[
\gamma(t) = r_0(t) + \varphi(t) + x - r_0(1),
\]
with $\varphi \in \mathcal{D}_0^{1,2}(1,+\infty)$ and $x \in \mathcal{X}$, we define the reference paths as follows.

\smallskip
\noindent $\bullet$ {\em Hyperbolic solutions.}
For $a \in \Omega$, we set
\begin{equation}
    \label{eq:hyperbolic_r_0}
    r_0(t)=r_0^H(t):=at+\sum_{k=1}^{\lfloor 1/(2\alpha)\rfloor}\Gamma_k(a)\,t^{1-k\alpha},
\end{equation}
where $\Gamma_k(a)$ are defined in \eqref{eq:gamma_k}. Note, if $\alpha > 1/2$, then $r_0(t)=at$.

\noindent $\bullet$ {\em Parabolic solutions.}
For a minimal central configuration $b_m$ and $\beta$ as in \eqref{e:beta}, we define
\begin{equation}
    \label{eq:parabolic_eq}
    r_0(t)=r_0^P(t):= \beta \, b_m \,t^\frac{2}{2+\alpha}.
\end{equation} 

\noindent $\bullet$ {\em Hyperbolic-Parabolic solutions.}
For $a \in \Delta \setminus\{0\}$, $b_m$ and $\beta$ as in \eqref{e:b_a_cluster}, we define
\begin{equation}
    \label{eq:HP_eq}
    r_0(t)=r_0^{HP}(t):=at+ \beta \, b_m \,t^\frac{2}{2+\alpha}.
\end{equation}

As in the case $\alpha=1$, studied in \cite{PolimeniTerracini}, for certain values of $\alpha$ the Lagrangian of an expansive motion in the $N$-body problem is not integrable on the half-line $[1,+\infty)$, since it may grow at most linearly at infinity. To address this lack of integrability on $[1,+\infty)$, we introduce the following renormalization of the action.

\begin{defn}[Renormalized Lagrangian Action]\label{def:ren_action}
Given $\alpha$, an initial configuration $x \in \mathcal{X}$ and a reference path $r_0$, 
we define the \emph{Renormalized Lagrangian Action} as the functional $\mathcal{A}_x^{ren}: \mathcal{D}_0^{1,2}(1,+\infty) \to \mathbb{R}$ given by:
\begin{displaymath}\label{def:renormalized_action}
    \mathcal{A}_x^{ren}(\varphi) = \int_1^{+\infty}  \frac{1}{2} \|\dot{\varphi}(t)\|_\mathcal{M}^2 + U^\alpha(r_0(t) +\varphi(t) +  x - r_0(1)) - U^\alpha(r_0(t)) - \langle \mathcal{M} \ddot{r}_0(t), \varphi(t) \rangle \, \mathrm{d}t.
\end{displaymath}
\end{defn}

It is trivial to see that the renormalization is not required when considering hyperbolic motions with $\alpha\in(1,+\infty)$, as, for $\varphi\in\mathcal{D}_0^{1,2}(1,+\infty)$,  $U(at + \varphi(t) +x - r_0(1))\in L^1([1,+\infty))$. In such case, it will be enough to consider the usual Lagrangian action $\mathcal{A}_x:\mathcal{D}_0^{1,2}(1,+\infty)\rightarrow\R$,
\[
\mathcal{A}_x(\varphi) = \int_1^{+\infty}  \frac{1}{2} \|\dot{\varphi}(t)\|_\mathcal{M}^2 + U^\alpha(r_0(t) +\varphi(t) +  x - r_0(1)) - \langle \mathcal{M} \ddot{r}_0(t), \varphi(t) \rangle \, \mathrm{d}t. 
\]

To simplify the notation, in what follows we will use $\mathcal{A}$ to denote both the Lagrangian action and its renormalized version. The distinction between the two cases will be clear from the context, and the dependence on $x$ will be omitted. We will omit also the apex $\alpha$ on $U^\alpha$.

\begin{rem}

In the hyperbolic-parabolic setting, given a configuration $a\in\Delta$, the $a$-cluster partition of the bodies can be exploited for the following decomposition of the renormalized Lagrangian action, which will be used several times in the paper (see \cite{PolimeniTerracini}):
\begin{equation}\label{eq:decomposed_action}
\begin{split}
    \mathcal{A}(\varphi) & = \sum_{K\in\mathcal{P}} \mathcal{A}_{K}(\varphi) + \sum_{K_1,K_2\in\mathcal{P},\ K_1\neq K_2} \mathcal{A}_{K_1,K_2}(\varphi)\\
    &  = \sum_{K\in\mathcal{P}}\bigg(\sum_{i,j\in K,\ i<j} m_i m_j\mathcal{A}_{K}(\varphi)_{ij}\bigg) + \frac{1}{2}\sum_{K_1,K_2\in\mathcal{P},\ K_1\neq K_2}\bigg(\sum_{i\in K_1,\ j\in K_2} m_i m_j\mathcal{A}_{K_1,K_2}(\varphi)_{ij}\bigg),
\end{split}
\end{equation}
where
\[
\begin{split}
    \mathcal{A}_{K}(\varphi)_{ij} = & \int_{1}^{+\infty} \frac{1}{2M_{K}} |\dot{\varphi}_{ij}(t)|^2 + \frac{1}{|\beta^K b^K_{ij}t^{\frac{2}{2+\alpha}}+\varphi_{ij}(t)+\Tilde{x}_{ij}|^\alpha} - \frac{1}{|\beta^K b^K_{ij}t^{\frac{2}{2+\alpha}}|^\alpha} \\
    &+ \frac{2\alpha}{(2+\alpha)^2}\frac{\beta^K}{M_K} \frac{\langle b^K_{ij},\varphi_{ij}(t)\rangle}{t^{\frac{2(1+\alpha)}{2+\alpha}}}\ \ud t,
\end{split}
\]
and
\[
\begin{split}
    \mathcal{A}_{K_1,K_2}(\varphi)_{ij} = &\int_{1}^{+\infty}\frac{1}{2M_{K_{1,2}}} |\dot{\varphi}_{ij}(t)|^2+ \frac{1}{|a_{ij}t+\beta^{K_{1,2}} b^{K_{1,2}}_{ij}t^{\frac{2}{2+\alpha}}+\varphi_{ij}(t)+\Tilde{x}_{ij}|^\alpha} - \frac{1}{|a_{ij}t|^\alpha}\ \ud t.
\end{split}
\]
We adopt the notation:
\begin{displaymath}
    \begin{split}
        &M_K = \sum_{i\in K}m_i,\quad M_{K_{1,2}} = \sum_{i\in K_1\cup K_2}m_i,\\
        &\beta^{K_{1,2}}=(\beta^{K_1},\beta^{K_2}),\quad b^{K_{1,2}}=(b^{K_1},b^{K_2}),\\
        &\beta^{K_{1,2}}b^{K_{1,2}}=(\beta^{K_1}b^{K_1},\beta^{K_2}b^{K_2}).
    \end{split}
\end{displaymath}

The term $\mathcal{A}_K$ refers to the motion of the bodies inside each cluster $K$, and the term $\mathcal{A}_{K_1,K_2}$ refers to the interactions between pairs of bodies that belong to pairs of clusters $K_1\ne K_2$.

{We also point out that in the decomposition above, to simplify our computations even more, we use a slight variation of our usual renormalization term $-U(r_0(t))$, which can be replaced by
    \[
    -\tilde{U}(r_0(t)) = -\sum_{K\in\mathcal{P}} \bigg(\sum_{i,j\in K,\ i<j} \frac{m_i m_j}{|\beta^K b_{ij}^K t^{\frac{2}{2+\alpha}}|^\alpha}\bigg) - \frac{1}{2}\sum_{K_1,K_2\in\mathcal{P},\ K_1\neq K_2} \bigg(\sum_{i\in K_1,\ j\in K_2} \frac{m_i m_j}{|a_{ij}t|^\alpha}\bigg).
    \]}
\end{rem}
This will not affect our search for minimizers, since the renormalization term does not depend on $\varphi$ (see \cite[Section 5]{PolimeniTerracini}).

For some $\varphi \in \mathcal{D}^{1,2}_0(1,+\infty)$ and for a pair of indexes $i<j$, $r_0(t)_{ij}+\varphi_{ij}(t)+x_{ij}-r_0(1)_{ij}$ may be $0$ at some instant, in a way that makes $U$ non integrable. As a consequence, for some $\varphi \in \mathcal{D}^{1,2}_0(1,+\infty)$, $\mathcal{A}(\varphi)$ might not be finite. The next proposition informs us that $\mathcal{A}(\varphi)\in \mathbb R$ if $r_0(t)+\varphi(t)+x-r_0(1)$ does not admit collisions (except, at most, at the initial configuration).

\begin{prop}[Finiteness of $\mathcal{A}$ over non-collision sets]
\label{prop:well-posedness}
Assume either $\alpha >0$ and 
$r_0$ as in \eqref{eq:hyperbolic_r_0}, or $\alpha \in (0,2)$ and $r_0$ as in \eqref{eq:parabolic_eq}, or $\alpha \in (1/2,2)$ and $r_0$ as in \eqref{eq:HP_eq}. In all these settings, for any fixed initial configuration $x\in\mathcal{X}$, $\mathcal A$ is finite over the set
\[
\mathcal{D}_0^{1,2}(1,+\infty) \setminus \{\varphi\in\mathcal{D}_0^{1,2}(1,+\infty):\exists i<j,\ \exists t\in (1,+\infty)  \text{ such that } r_0(t)_{ij}+\varphi_{ij}(t)+x_{ij}-r_0(1)_{ij}=0\}.
\]
\end{prop}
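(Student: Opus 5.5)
Fix $\varphi\in\mathcal{D}_0^{1,2}(1,+\infty)$ such that $\gamma(t)=r_0(t)+\varphi(t)+x-r_0(1)$ has no collisions for $t>1$. The kinetic term $\frac12\|\dot\varphi\|_{\mathcal M}^2$ is integrable by definition of the space. Hence it suffices to show that the potential part
\[
F(t)=U(\gamma(t))-U(r_0(t))-\langle\mathcal M\ddot r_0(t),\varphi(t)\rangle
\]
is integrable on $[1,+\infty)$. In the non-renormalized hyperbolic case $\alpha>1$, the integrand is $U(\gamma(t))$ and $\ddot r_0=0$. I split the half-line as $[1,T]\cup[T,+\infty)$ with $T$ large. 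On $[T,+\infty)$ I use asymptotic expansions. On $[1,T]$ I use continuity together with the local behaviour near $t=1$.

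\emph{Compact part.} On any $[1+\eta,T]$, $\gamma$ is continuous and collision-free, so $U(\gamma)$ is bounded. The functions $U(r_0)$ and $\ddot r_0$ are continuous on $[1,T]$ in all three cases; in the (HP) case $U$ is replaced by the clustered $\tilde U$. Moreover $\varphi$ is bounded. The only delicate point is $t\to1^+$ when $x\in\Delta$. There some pair has $\gamma_{ij}(t)=\varphi_{ij}(t)+(r_0)_{ij}(t)-(r_0)_{ij}(1)$. By \eqref{dis_space_D012}, $|\varphi_{ij}(t)|\le C(t-1)^{1/2}$, and the $r_0$ increment is $O(t-1)$. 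Thus $|\gamma_{ij}(t)|\le C(t-1)^{1/2}$, but this is an \emph{upper} bound, and integrability of $|\gamma_{ij}|^{-\alpha}$ needs a lower bound. This is the main obstacle.

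I would first try the following observation. The statement only excludes collisions on $(1,+\infty)$, so for $\alpha\ge 2$ (hyperbolic case) the action may genuinely be $+\infty$ at an initial collision. Its value is therefore a well-defined element of $(-\infty,+\infty]$, since the singular part is nonnegative. Finiteness there would have to be interpreted as ``$>-\infty$ and finite away from $t=1$'', or the proposition read with $x\in\Omega$ in mind. Concretely, I would argue that $\int_1^{1+\eta}U(\gamma)\,dt$ is a nonnegative integral. It is finite whenever $x\in\Omega$, by continuity. I would treat the case $x\in\Delta$ as in \cite{PolimeniTerracini}, stating that the remaining terms are integrable and the singular term contributes a well-defined value. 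I expect this step to need the most care in matching the intended statement.

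\emph{Tail.} This is the core. By \eqref{dis_space_D012}, $\|\varphi(t)\|=O(t^{1/2})$, while in all three cases $|(r_0)_{ij}(t)|\gtrsim t^{2/(2+\alpha)}$. The latter holds with growth $t$ in case (H) and for inter-cluster pairs in (HP). Since $2/(2+\alpha)>1/2$ exactly when $\alpha<2$, for $t\ge T$ we get $|\gamma_{ij}(t)|\ge c|(r_0)_{ij}(t)|$; this is where the restriction $\alpha<2$ enters. I then Taylor-expand $U$ around $r_0(t)$:
\[
U(\gamma)-U(r_0)=\langle\nabla U(r_0),\varphi+x-r_0(1)\rangle+O\big(|r_0|^{-\alpha-2}|\varphi+c|^2\big).
\]
The quadratic remainder is $O(t^{-2(\alpha+2)/(2+\alpha)}\,t)=O(t^{-1})\cdot o(1)$ in the parabolic scaling. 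Its integrability follows from the weighted Hardy inequality \eqref{hardy_compact}: since $|r_0|^{-\alpha-2}\lesssim t^{-2}$, the term is bounded by $\|\varphi\|^2/t^{2}$, which is integrable by Proposition~\ref{dis_hardy}, plus a constant times $t^{-2}$. For the linear term, I use that $r_0$ nearly solves Newton's equation. In case (P), $\mathcal M\ddot r_0=\nabla U(r_0)$ exactly, so the linear term cancels against $\langle\mathcal M\ddot r_0,\varphi\rangle$. What remains is $\langle\nabla U(r_0),x-r_0(1)\rangle=O(t^{-2(1+\alpha)/(2+\alpha)})$, which is integrable.

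In case (H), the choice of $\Gamma_k$ in \eqref{eq:gamma_k} makes $\mathcal M\ddot r_0-\nabla U(r_0)=O(t^{-1-(P)\alpha})$ with $P\alpha>1/2$. Pairing with $\varphi=O(t^{1/2})$ therefore gives an integrable term, and $|\nabla U(r_0)|=O(t^{-1-\alpha})$ handles the constant shift. In case (HP), intra-cluster terms behave as in (P). Inter-cluster terms have $\nabla U=O(t^{-1-\alpha})$, which is not compensated by $\ddot r_0$, and paired with $|\varphi|\lesssim t^{1/2}$ they give $t^{-1/2-\alpha}$, integrable precisely when $\alpha>1/2$. I would also use Cauchy--Schwarz with Hardy, $\int t^{-\alpha}\|\varphi\|/t\le(\int t^{-2\alpha})^{1/2}(\int\|\varphi\|^2/t^2)^{1/2}$, which again requires $\alpha>1/2$. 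The cross-term between clusters, $|a_{ij}t+\beta b t^{2/(2+\alpha)}|^{-\alpha}-|a_{ij}t|^{-\alpha}=O(t^{-\alpha-1+2/(2+\alpha)})$, is integrable for $\alpha>1/2$ as well. Summing the compact and tail contributions gives finiteness of $\mathcal A(\varphi)$.
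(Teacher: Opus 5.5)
Your treatment of the tail is essentially the paper's argument. It uses the Taylor expansion of $U$ around $r_0$, and the fact that $r_0$ solves Newton's equation exactly in case (P) and up to $O(t^{-1-(m+1)\alpha})$ in case (H) thanks to \eqref{eq:gamma_k}. The quadratic remainder is controlled by Hardy's inequality, and the uncompensated linear terms by the bounds $t^{-1/2-(m+1)\alpha}$, resp.\ $t^{-1/2-\alpha}$, which is where $\alpha>1/2$ enters.

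Your concern at $t\to1^+$ is well founded, and the paper's proof does not address it, since it only estimates the integrand for large $t$. If $x\in\Delta$, the statement is false as written:
\begin{itemize}
\item In the hyperbolic case with $\alpha\ge2$, your bound $|\gamma_{ij}(t)|\le C(t-1)^{1/2}$ forces $\mathcal A(\varphi)=+\infty$ for \emph{every} $\varphi$.
\item For any $\alpha$, one can build $\varphi$ with $|\gamma_{ij}(t)|=(t-1)^k$ near $t=1$ and $k\alpha\ge1$. Such a path is collision-free on $(1,+\infty)$, yet its action is infinite.
\end{itemize}
So what can be proved, and what your compact-part argument does prove, is finiteness for $x\in\Omega$. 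More generally, $\mathcal A(\varphi)\in(-\infty,+\infty]$, and it is finite as soon as $U(\gamma)\in L^1(1,2)$.

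The step that fails is your inter-cluster cross-term. The quantity $|a_{ij}t+\beta b_{ij}t^{2/(2+\alpha)}|^{-\alpha}-|a_{ij}t|^{-\alpha}$ is of order $t^{-1-\alpha+2/(2+\alpha)}$. This is integrable only if $\alpha^2+2\alpha-2>0$, i.e.\ $\alpha>\sqrt{3}-1$, not for all $\alpha>1/2$. Summing over $i\in K_1$, $j\in K_2$ with weights $m_im_j$ removes the first-order term, because the $b^K$ have zero cluster barycenters. However, the second-order term is of order $t^{-2-\alpha+4/(2+\alpha)}$, with a coefficient that in general does not vanish, and it is non-integrable for $\alpha\le(\sqrt{17}-3)/2$. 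So for $\alpha$ close to $1/2$ the $\tilde U$-renormalized action is in general not finite at all, and replacing $U(r_0)$ by $\tilde U(r_0)$ is not harmless there. The paper's one-line reduction of $\mathcal A_{K_1,K_2}$ to the hyperbolic case meets the same exponent, since the perturbation of $a_{ij}t$ is now $O(t^{2/(2+\alpha)})$ rather than $O(t^{1/2})$.

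The repair is to use $-U(r_0(t))$, as in Definition~\ref{def:ren_action}, on a tail $[T_0,+\infty)$ on which $r_0$ is collision-free.
\begin{itemize}
\item Write $U=U_{\mathrm{clust}}+U_{\mathrm{inter}}$ for the intra- and inter-cluster parts, expand the full $U$ around $r_0$, and use $\mathcal M\ddot r_0=\nabla U_{\mathrm{clust}}(r_0)$.
\item The inter-cluster interaction then enters only through $\langle\nabla U_{\mathrm{inter}}(r_0),\varphi\rangle=O(t^{-1/2-\alpha})$ and through the Hessian remainder, both of which you already control. No cross-term appears.
\item On $[1,T_0]$, where $r_0$ itself may collide (presumably why you switched to $\tilde U$), any integrable $\varphi$-independent renormalization, such as $\tilde U(r_0)$, will do.
\end{itemize}
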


\begin{proof}
First, we note that the cases corresponding to $\alpha = 1$ were already treated in \cite{PolimeniTerracini}. We now consider the remaining cases one at a time. 

\noindent $\bullet$ The case $\alpha >1$ and $r_0(t)$ hyperbolic \eqref{eq:hyperbolic_r_0}, the statement is trivial, since
\[
U(r_0(t)+\psi(t)) = \frac{U(z(t))}{\|at + \psi(t)\|_\mathcal{M}^\alpha} = \frac{U(\hat a)}{\|a\|_\mathcal{M}^\alpha t^\alpha}+\mbox{ lower order term} \in L^1(1,+\infty),
\]
since $\alpha >1$.

\noindent $\bullet$
The case $\alpha \in (1/2,1)$, $r_0$ hyperbolic \eqref{eq:hyperbolic_r_0}, one can argue as in the case $\alpha=1$ (\cite{PolimeniTerracini}) to decompose $\mathcal{A}$ as follows:
\[
\mathcal{A}(\varphi)= \sum_{i\neq j}m_im_j \int_1^{+\infty} \frac{|\dot\varphi_{ij}(t)|^2}{2}+\frac{1}{|a_{ij}t + \varphi_{ij}(t) + x_{ij} - a_{ij}|^\alpha}-\frac{1}{|a_{ij}t|^\alpha}\,\ud t.
\]
It suffices then to observe that, for every pair $i\neq j$, we have, by Taylor's expansion,
\[
\frac{1}{|a_{ij}t + \varphi_{ij}(t) + x_{ij} - a_{ij}|^\alpha} - \frac{1}{|a_{ij}t|^\alpha} \le \int_{0}^{1}\alpha\frac{|\varphi_{ij}(t) + x_{ij} - a_{ij}|}{|a_{ij}t + s(\varphi_{ij}(t) + x_{ij} - a_{ij})|^\alpha}\ \ud s \le \frac{C}{t^{\frac{1}{2}+\alpha}}\in L^1(1,+\infty),
\]
for a proper $C\in\R$.

\smallskip
\noindent $\bullet$ The case $\alpha \in (0,1/2]$ and $r_0$ hyperbolic \eqref{eq:hyperbolic_r_0} is more subtle. We argue as follows. 

{Consider
\[
r_0(t)=at+\Gamma_1\,t^{1-\alpha}+\Gamma_2\,t^{1-2\alpha}+\dots+\Gamma_{m-1}\,t^{1-(m-1)\alpha}+\Gamma_m\,t^{1-m\alpha},
\]
where $m=\lfloor 1/(2\alpha)\rfloor$.

On one hand, we have 
\[
\begin{aligned}
\mathcal M \ddot r_0(t)&=-\alpha(1-\alpha) \mathcal M \,\Gamma_1\, t^{-(1+\alpha)}-2\alpha(1-2\alpha)\mathcal M\Gamma_2\,t^{-(1+2\alpha)}+\dots-m\alpha(1-m\alpha)\mathcal M\Gamma_m\,t^{-(1+\alpha m)}
\\
&=-\frac{\alpha}{t^{1+\alpha}}\left\{(1-\alpha)\mathcal M \Gamma_1 +2(1-2\alpha)\mathcal M\Gamma_2\,t^{-\alpha} +\dots+m(1-m\alpha)\mathcal M \Gamma_m\,t^{-(m-1)\alpha}\right\}
\\
&=-\frac{\alpha}{t^{1+\alpha}}\left\{\mathcal S_0+\mathcal S_1\,t^{-\alpha}+\mathcal S_2\,t^{-2 \alpha}+\dots + \mathcal S_{m-1}\,t^{-(m-1)\alpha}\right\},
\end{aligned}
\]
where
\[
\mathcal S_j:=(j+1)(1-(j+1)\alpha)\mathcal M \Gamma_{j+1} \ \mbox{ for } \ j=0,\dots,m-1.
\]
On the other hand,
\[
\begin{aligned}
\nabla U(r_0(t))&=\nabla U(t(a+\Gamma_1 t^{-\alpha}+\Gamma_2\,t^{-2\alpha}+\dots+\Gamma_m\,t^{-m\alpha}))
\\
&=\frac{1}{t^{1+\alpha}} \nabla U(a+\Gamma_1 t^{-\alpha}+\Gamma_2\,t^{-2\alpha}+\dots+\Gamma_m\,t^{-m\alpha})
\\
&=\frac1{t^{1+\alpha}}\left\{\nabla U(a)+\nabla^2 U(a)\left(\Gamma_1 t^{-\alpha}+\Gamma_2\,t^{-2\alpha}+\dots+\Gamma_m\,t^{-m\alpha}\right)\right.
\\
&\quad + \frac{1}{2}\nabla^3 U(a)\left[\Gamma_1,\Gamma_1 \right]t^{-2\alpha}+\frac12\nabla^3 U(a)\left[\Gamma_1,\Gamma_2\right]\,t^{-3\alpha}+\dots+\frac12\sum_{i,j} \nabla^3 U(a)\left[\Gamma_i,\Gamma_j\right]\,t^{-(i+j)\alpha}
\\
&\quad + \frac{1}{3!}\sum_{i,j,k}\nabla^4 U(a)\left[\Gamma_i,\Gamma_j,\Gamma_k\right]\,t^{-(i+j+k)\alpha}+\dots + \frac{1}{q!}\sum_{i_1,\dots,i_q}\nabla^{q+1}U(a)\left[\Gamma_{i_1},\dots,\Gamma_{i_q}\right]\,t^{-(i_1+\dots+i_q)\alpha}
\\
&\quad \left.+ \frac{1}{p!}\nabla^{p+1}U(a)\left[\Gamma_1,\dots,\Gamma_1\right]\,t^{-m\alpha}+o(t^{-m\alpha})\right\}
\\
&=\frac{1}{t^{1+\alpha}}\left\{\nabla U(a)+\mathcal R_1\,t^{-\alpha}+\mathcal R_2\,t^{-2\alpha}+\dots+\mathcal R_{m-1}\,t^{-(m-1)\alpha}+\mathcal R_m\,t^{-m\alpha}+o(t^{-m\alpha}) \right\},
\end{aligned}
\]
where the last two identities are true as $t \rightarrow+\infty$, and where
\begin{equation} 
\label{eq:R}
\mathcal R_j:=\sum_{q=1}^{j}\frac{1}{q!}\sum_{i_1,\dots,i_q}\nabla^{q+1}U(a)\left[\Gamma_{i_1},\dots, \Gamma_{i_q}\right], \ \mbox{ for } \ j=1,\dots,m.
\end{equation}
Hence,
\[
\begin{aligned}
\mathcal M \ddot r_0(t)-\nabla U(r_0(t))&=\frac{1}{t^{1+\alpha}}\left\{-\left(\alpha \mathcal S_0+\nabla U(a)\right)-\left(\alpha \mathcal S_1+\mathcal R_1\right)t^{-\alpha}-\left(\alpha \mathcal S_2+\mathcal R_2\right)t^{-2\alpha}\right.
\\
&\quad \left.+\dots-\left(\alpha \mathcal S_{m-1}+\mathcal R_{m-1}\right)t^{-(m-1)\alpha}-\mathcal R_m\,t^{-m\alpha}+o(t^{-m\alpha}) \right\}.
\end{aligned}
\]
Under the very definition \eqref{eq:gamma_k} of $\Gamma_k=\Gamma_k(a)$, we get
\[
\alpha \mathcal S_0+\nabla U(a)=\alpha \mathcal S_j+\mathcal R_j=0, \ \mbox{ for } \ j=1,\dots,m-1,
\]
and hence
\begin{equation}
    \label{e:diff_r0}
    \mathcal M \ddot r_0(t)-\nabla U(r_0(t))=\mathcal R_m\,t^{-1-(m+1)\alpha }+o(t^{-1-(m+1)\alpha }),
\end{equation}
as $t\rightarrow+\infty$.
}

As a consequence, for every $\varphi \in \mathcal{D}_0^{1,2}(1,+\infty)$, we have
\[
\begin{split}
    &U(r_0(t)+\varphi(t))-U(r_0(t))-\langle \ddot r_0(t), \varphi(t)\rangle_\mathcal{M}\\
    & = U(r_0(t)+\varphi(t))-U(r_0(t))-\langle \nabla U( r_0(t)), \varphi(t)\rangle  - \langle \mathcal R_m\,t^{-1-(m+1)\alpha }+o(t^{-1-(m+1)\alpha }), \varphi(t)\rangle.
\end{split}
\]

We notice that 
\[
\left\|\langle \mathcal R_m t^{-1-\alpha(m+1)}, \varphi(t)\rangle \right\|=
\left\| \mathcal R_m\right\|t^{-1-(m+1)\alpha}\,\| \varphi(t)\|\le \frac{C}{t^{\frac12+(m+1)\alpha}} \in L^1(1,+\infty),
\]
for some $C\in\R$.

In addition:
\[
\begin{split}
    U(r_0(t)+\varphi(t))-U(r_0(t))-\langle \nabla U( r_0(t)), \varphi(t)\rangle & = \int_{0}^{1} \langle \nabla U(r_0(t)+s \varphi(t))- \nabla U( r_0(t)), \varphi(t)\rangle\ \ud s\\
    & = \int_{0}^{1}\int_{0}^{1} \langle \nabla^2 U(r_0(t)+\bar ss \varphi(t))\varphi(t), s\varphi(t)\rangle\ \ud \bar s\ \ud s\\
    & = \int_{0}^{1}\int_{0}^{1} \frac{\langle \nabla^2 U(z(t))\varphi(t), s\varphi(t)\rangle}{\|r_0(t) + \bar ss\varphi(t)\|_\mathcal{M}^{2+\alpha}}\ \ud \bar s\ \ud s\\
    & \le C'\int_{0}^{1} \int_{0}^{1} \frac{\|\varphi(t)\|_\mathcal{M}^2}{\|r_0(t) + \bar ss\varphi(t)\|_\mathcal{M}^{2+\alpha}}\ \ud\bar s\ \ud s\\
    & \le C'\|\varphi\|_\mathcal{D}^2 \int_{0}^{1} \int_{0}^{1} \frac{t}{\|r_0(t) + \bar ss\varphi(t)\|_\mathcal{M}^{2+\alpha}}\ \ud\bar s\ \ud s\\
    &\lesssim\frac{C''}{t^{1+\alpha}}\in L^1(1,+\infty),
\end{split}
\]
for proper $C',C''\in\R$ and $z(t) = \frac{r_0(t)+\bar ss \varphi(t)}{\|r_0(t)+\bar ss \varphi(t)\|_\mathcal{M}}\rightarrow \hat a=\frac{a}{\|a\|_\mathcal{M}}\in\Omega$, as $t\rightarrow+\infty$. Indeed, by the homogeneity of the potential, it holds
\[
\nabla^2 U (r_0(t) + \bar s s\varphi(t)) = \frac{\nabla^2 U (\frac{r_0(t) + \bar s s\varphi(t)}{\|r_0(t) + \bar s s\varphi(t)\|_\mathcal{M}})}{\|r_0(t) + \bar s s\varphi(t)\|_\mathcal{M}^{2+\alpha}} = \frac{\nabla^2 U(z(t))}{\|a\|_\mathcal{M}^{2+\alpha}t^{2+\alpha}(1 + o(t^{-\alpha}))},\quad t\rightarrow+\infty.
\]

Hence, there is a constant $C\in\R$ such that
\[
\left| U(r_0(t)+\varphi(t))-U(r_0(t))-\langle \ddot r_0(t), \varphi(t)\rangle_\mathcal{M}\right| \le \frac{C}{t^{\frac12+(m+1)\alpha}} \in L^1(1,+\infty),
\]
that proves the statement.

\noindent $\bullet$ The case $\alpha\in (0,2)$ and $r_0$ parabolic as in \eqref{eq:parabolic_eq}, the statement follows by observing, exactly as in the case $\alpha=1$, that there exist $C>0$ and $T \ge 1$ such that 
\begin{equation}
\label{eq:A}
\left|\langle
\nabla^2 U(r_0(t)+\psi(t))\psi(t),\psi(t)\rangle \right| =  \frac{\left|\langle\nabla^2 U(z(t))\psi(t), \psi(t)\rangle\right|}{\|r_0(t)+\psi(t)\|_\mathcal{M}^{2+\alpha}} \le C\, \frac{\|\psi(t)\|_\mathcal{M}^2}{t^2} \ \mbox{ if } \ t \ge T.
\end{equation}
Here, $\psi(t)=\varphi(t)+x-r_0(1)$, and $z(t)=\frac{r_0(t)+\psi(t)}{\|r_0(t)+\psi(t)\|_\mathcal{M}} \to b_m \notin \Delta$, as $t \rightarrow+\infty$. 

Thus, we have
\[
\begin{aligned}
U(r_0(t)+\psi(t))-U(r_0(t))-\langle \ddot r_0(t), \varphi(t)\rangle_{\mathcal M} &= \int_0^1 \langle \nabla U(r_0(t)+u\,\psi(t)), \psi(t)\rangle\, \ud u- \langle \nabla U(r_0(t)),\varphi(t)\rangle
\\
&= \int_0^1 \int_0^1\langle \nabla^2 U(r_0(t)+u\,v\,\psi(t))\psi(t), \psi(t)\rangle\,\ud u \,\ud v + \mbox{ l.o.t.}.
\end{aligned}
\]
Since $u, v \in [0,1]$, then from \eqref{eq:A} and Hardy's inequality, we get
\[
\int_{T}^{+\infty} \left| U(r_0(t)+\psi(t))-U(r_0(t))-\langle \ddot r_0(t), \varphi(t)\rangle_{\mathcal M}\right|\, \ud t \le C\int_T^\infty \frac{\|\psi(t)\|_\mathcal{M}^2}{t^2}\,\ud t \le C',
\]
since $\varphi \in \mathcal{D}_0^{1,2}(1,+\infty)$.

\noindent $\bullet$ The case $\alpha \in (1/2,2)$ and $r_0$ as in \eqref{eq:HP_eq}.


The well-definition of the renormalized action in the hyperbolic-parabolic setting follows from the same arguments as above, after decomposing the action as in \eqref{eq:decomposed_action}.

Indeed, the proof of the well-definition of the terms $\mathcal{A}_K$, for a cluster $K$, is the same as in the parabolic setting, while the well-definition of the terms $\mathcal{A}_{K_1,K_2}$, for pairs of clusters $K_1,K_2$, is proved as in the hyperbolic setting for $\alpha \in (1/2,2)$.
\end{proof}


In the next proposition we prove that $\mathcal A$ is differentiable (and continuous) on the the set of non-collision sets. 

\begin{prop}[Regularity of $\mathcal{A}$ over non-collision sets]
\label{prop:reg}
Assume either $\alpha >0$ and 
$r_0$ as in \eqref{eq:hyperbolic_r_0}, or $\alpha \in (0,2)$ and $r_0$ as in \eqref{eq:parabolic_eq}, or $\alpha \in (1/2,2)$ and $r_0$ as in \eqref{eq:HP_eq}. In all these settings, for any fixed initial configuration $x\in\mathcal{X}$, $\mathcal A$ is of class $C^1$ on the set
\[
\mathcal{D}_0^{1,2}(1,+\infty) \setminus \{\varphi\in\mathcal{D}_0^{1,2}(1,+\infty):\exists i<j,\ \exists t\in (1,+\infty)  \text{ such that } r_0(t)_{ij}+\varphi_{ij}(t)+x_{ij}-r_0(1)_{ij}=0\}.
\]
\end{prop}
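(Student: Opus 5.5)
We split $\mathcal A=\mathcal K+\mathcal V$, with kinetic part $\mathcal K(\varphi)=\frac12\|\varphi\|_{\mathcal D}^2$ and potential part
\[
\mathcal V(\varphi)=\int_1^{+\infty} \Big( U(r_0+\varphi+x-r_0(1))-U(r_0)-\langle \mathcal M\ddot r_0,\varphi\rangle \Big)\,\ud t .
\]
The term $\mathcal K$ is a continuous quadratic form on the Hilbert space $\mathcal D_0^{1,2}(1,+\infty)$, so it is smooth. Everything therefore reduces to $\mathcal V$.

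Fix $\varphi$ in the non-collision set $\mathcal N$ and set $\psi=\varphi+x-r_0(1)$. The candidate differential is
\[
\mathcal V'(\varphi)[\eta]=\int_1^{+\infty}\langle \nabla U(r_0+\psi)-\mathcal M\ddot r_0,\eta\rangle\,\ud t .
\]
We show three things: this integral is absolutely convergent and defines a bounded functional of $\eta$; $\mathcal V$ is G\^ateaux differentiable with this derivative; and $\varphi\mapsto\mathcal V'(\varphi)$ is continuous into the dual space. Together these give $C^1$.

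\textbf{Step 1: $\mathcal N$ is open and has a uniform collision margin.} On a compact interval $[1,T]$, convergence in $\mathcal D$ implies uniform convergence, by \eqref{dis_space_D012}. The path $r_0+\psi$ is continuous and stays away from $\Delta$ on $(1,T]$. For the endpoint $t=1$, where $\gamma(1)=x$ may itself be a collision configuration, we argue as in \cite{PolimeniTerracini}: either $x\in\Omega$, or one treats $[1,1+\delta]$ separately using local integrability. For large $t$, \eqref{dis_space_D012} gives $\|\psi(t)\|\le C\sqrt t$, uniformly for $\varphi$ in a $\mathcal D$-ball. Since $\sqrt t=o(t^{2/(2+\alpha)})$ in the parabolic case and $\sqrt t=o(t)$ in the hyperbolic case, the normalized configuration stays in a compact subset of $\Omega$ for $t\ge T$. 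This is where the assumption $\alpha<2$ enters.

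\textbf{Step 2: the derivative is bounded and continuous.} For $t\ge T$ we write
\[
\nabla U(r_0+\psi)-\mathcal M\ddot r_0=\bigl(\nabla U(r_0+\psi)-\nabla U(r_0)\bigr)+\bigl(\nabla U(r_0)-\mathcal M\ddot r_0\bigr).
\]
We estimate each piece exactly as in Proposition~\ref{prop:well-posedness}.
\begin{itemize}
\item The first piece is bounded by $\sup_s\|\nabla^2U(r_0+s\psi)\|\,\|\psi\|\lesssim \|\psi(t)\|/t^{2}$ in the parabolic case, and by $\|\psi(t)\|/t^{2+\alpha}$ in the hyperbolic case, using \eqref{eq:A} and homogeneity. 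Pairing with $\eta$ and applying Cauchy--Schwarz together with Hardy's inequality (Proposition~\ref{dis_hardy}) gives a bound $C\|\psi\|_{L^2(\ud t/t^2)}\|\eta\|_{\mathcal D}$.
\item The second piece vanishes identically when $r_0$ is an exact solution: the parabolic homothetic path, and the linear path for the inter-cluster terms. In the hyperbolic case it is $O(t^{-1-(m+1)\alpha})$ by \eqref{e:diff_r0}. Pairing with $|\eta(t)|\le\sqrt t\,\|\eta\|_{\mathcal D}$ gives an integrable function because $(m+1)\alpha>1/2$, which holds by the choice $m=\lfloor 1/(2\alpha)\rfloor$.
\item On $[1,T]$ the integrand is bounded, again by Step 1.
\end{itemize}
G\^ateaux differentiability then follows by dominated convergence, since the bounds above are uniform along $\varphi+s\eta$ for $|s|$ small.

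For continuity of $\varphi\mapsto\mathcal V'(\varphi)$, we use the mean value theorem on $\nabla U$:
\[
\sup_{\|\eta\|_{\mathcal D}\le1}\bigl|(\mathcal V'(\varphi_n)-\mathcal V'(\varphi))[\eta]\bigr|\lesssim \Big(\int_1^{+\infty}\frac{\|\varphi_n-\varphi\|^2}{t^{2}}\,\ud t\Big)^{1/2}\lesssim\|\varphi_n-\varphi\|_{\mathcal D}.
\]
The Hessian bounds from Step 1 are uniform on a neighbourhood of $\varphi$, which makes this estimate legitimate.

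\textbf{Step 3: the hyperbolic-parabolic case.} We use the decomposition \eqref{eq:decomposed_action}. Intra-cluster terms are handled exactly as in the parabolic case. Inter-cluster terms are handled as in the hyperbolic case with $\alpha>1/2$. For these, the extra $t^{2/(2+\alpha)}$ shift is treated as part of $\psi$: it is $o(t)$, and its contribution to the linear term is $O\big(t^{2/(2+\alpha)}/t^{1+\alpha}\big)$ against $\sqrt t$, which is integrable for $\alpha>1/2$.

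\textbf{Main obstacle.} The delicate point is Step 1. We need the Hessian bounds to hold uniformly on a whole neighbourhood of $\varphi$, both near $t=1$ when $x\in\Delta$ and at infinity. We would first try to control the tail uniformly via the $\sqrt t$ bound from \eqref{dis_space_D012}. Near $t=1$ we would follow the treatment in \cite{PolimeniTerracini}.
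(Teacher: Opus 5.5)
This is the paper's argument, done more carefully than in the paper. The shared structure is: smooth kinetic part, differential $\eta\mapsto\int_1^{+\infty}\langle\nabla U(r_0+\psi)-\mathcal M\ddot r_0,\eta\rangle\,\ud t$, Lipschitz continuity of $\varphi\mapsto\mathcal V'(\varphi)$ in the dual norm via the mean value theorem on $\nabla U$, decay of $\nabla^2U$, Cauchy--Schwarz and Hardy, and the cluster decomposition \eqref{eq:decomposed_action} for the hyperbolic-parabolic case. Beyond the paper, you check boundedness of the differential and G\^ateaux differentiability, and your parabolic bound $\|\varphi_n-\varphi\|_{\mathcal M}/t^2$ is the correct one (the paper's exponent $\frac23(2+\alpha)$ is a leftover from $\alpha=1$). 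There are two slips: $at$ is not an exact solution, so for the inter-cluster terms what you need is $\nabla U(at)=O(t^{-1-\alpha})$, integrable against $\sqrt t$ precisely because $\alpha>1/2$; and the shift in Step~3 contributes $O(t^{2/(2+\alpha)}/t^{2+\alpha})$, not $O(t^{2/(2+\alpha)}/t^{1+\alpha})$.

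The endpoint $t=1$ for $x\in\Delta$, which you defer to \cite{PolimeniTerracini}, is silently skipped by the paper and cannot be repaired. For a pair with $x_i=x_j$ one has $|\gamma_{ij}(t)|=o(\sqrt{t-1})$, so perturbations of arbitrarily small $\mathcal D$-norm supported near $t=1$ cancel $\gamma_{ij}$ on a short interval; the set in the statement then has empty interior. Both your Step~1 and the proposition itself should therefore be read for $x\in\Omega$, or for variations compactly supported in $(1,+\infty)$, which is all Proposition~\ref{th:ren_act_pr} needs.
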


\begin{proof}
Since the term $\frac{\|\varphi\|_\mathcal{D}^2}{2}$ is a smooth functional, we focus on the term
\[
\mathcal{A}^{pot}(\varphi) := \int_{1}^{+\infty} P(t,\varphi(t))\ \ud t = \int_{1}^{+\infty} U(r_0(t) + \varphi(t) + x - r_0(1)) - U(r_0(t)) - \langle \ddot r_0(t),\varphi(t)\rangle_\mathcal{M}\ \ud t.
\]

Differentiating the functional, it holds
\[
\ud \mathcal{A}^{pot}(\varphi)[\psi] = \int_{1}^{+\infty} \langle\nabla P(t,\varphi(t)),\psi(t)\rangle\ \ud t = \int_{1}^{+\infty} \langle\nabla U(r_0(t)+\varphi(t)+x-r_0(1))-\ddot r_0(t),\psi(t)\rangle\ \ud t, 
\]
for all $\psi\in\mathcal{D}_0^{1,2}(1,+\infty)$.

Consider a sequence $(\varphi_n)_n\subset\mathcal{D}_0^{1,2}(1,+\infty)$. We claim that if $\varphi_n\rightarrow\varphi$ in $\mathcal{D}_0^{1,2}(1,+\infty)$, then
\begin{equation}\label{eq:regularity_potential}
\lim_{n\rightarrow+\infty}\sup_{\|\psi\|_\mathcal{D}\leq 1} \bigg|\int_{1}^{+\infty} \langle \nabla P(t,\varphi_n(t))-\nabla P(t,\varphi(t)),\psi(t)\rangle\ \ud t\bigg| = 0.
\end{equation}

We prove \eqref{eq:regularity_potential} for all of three expansive settings.

\noindent $\bullet$ {\bf Case $\alpha>0$ and $r_0$ hyperbolic.}

In this case, by the definition of the guiding curve, it holds $\ddot r_0(t)=0$. Then
\[
\ud \mathcal{A}^{pot}(\varphi)[\psi] = \int_{1}^{+\infty} \langle\nabla P(t,\varphi(t)),\psi(t)\rangle\ \ud t = \int_{1}^{+\infty} \langle\nabla U(r_0(t)+\varphi(t)+x-r_0(1)),\psi(t)\rangle\ \ud t, 
\]
for all $\psi\in\mathcal{D}_0^{1,2}(1,+\infty)$.

It holds
\[
\begin{split}
\nabla P(t,\varphi_n(t))-\nabla P(t,\varphi(t)) & = \nabla U(r_0(t)+\varphi_n(t)+x-r_0(1)) - \nabla U(r_0(t)+\varphi(t)+x-r_0(1))\\
& = \int_{0}^{1} \langle \nabla^2 U (r_0(t) + \varphi(t) + x - r_0(1) + \sigma(\varphi_n(t) - \varphi(t)), \varphi_n(t) - \varphi(t)\rangle\ \ud \sigma\\
& \lesssim \frac{\|\varphi_n(t) - \varphi(t)\|_\mathcal{M}}{t^{2+\alpha}}.
\end{split}
\]

By Cauchy-Schwartz inequality and Proposition \ref{prop:compact_emb}, it follows
\[
\begin{split}
&\sup_{\|\psi\|_\mathcal{D}\leq 1} \bigg|\int_{1}^{+\infty} \langle \nabla U(r_0(t)+\varphi_n(t)+x-r_0(1)) - \nabla U(r_0(t)+\varphi(t)+x-r_0(1)),\psi(t)\rangle\ \ud t\bigg| \\
&\le \sup_{\|\psi\|_\mathcal{D}\leq 1} \int_{1}^{+\infty} t\| \nabla U(r_0(t)+\varphi_n(t)+x-r_0(1)) - \nabla U(r_0(t)+\varphi(t)+x-r_0(1)),\psi(t)\|_\mathcal{M}\frac{\|\psi(t)\|_\mathcal{M}}{t}\ \ud t\\
&\lesssim \sup_{\|\psi\|_\mathcal{D}\leq 1} \int_{1}^{+\infty} \frac{\|\varphi_n(t) - \varphi(t)\|_\mathcal{M}}{t^{1+\alpha}}\frac{\|\psi(t)\|_\mathcal{M}}{t}\ \ud t\\
&\le \sup_{\|\psi\|_\mathcal{D}\leq 1} \bigg(\int_{1}^{+\infty} \frac{\|\varphi_n(t) - \varphi(t)\|_\mathcal{M}^2}{t^{2(1+\alpha)}}\ \ud t\bigg)^{1/2}\bigg(\int_{1}^{+\infty}\frac{\|\psi(t)\|_\mathcal{M}^2}{t^2}\ \ud t\bigg)^{1/2}\\
& \le \frac{4}{(1+2\alpha)}\bigg(\int_{1}^{+\infty} \|\dot\varphi_n(t) - \dot\varphi(t)\|_\mathcal{M}^2\ \ud t\bigg)^{1/2}\\
& = \frac{4}{(1+2\alpha)}\|\varphi_n - \varphi\|_\mathcal{D}.
\end{split}
\]

\noindent $\bullet$ {\bf Case $\alpha \in (0,2)$ and $r_0$ parabolic}

Since
\[
    \nabla U(r_0(t)+\varphi(t)+x - r_0(1))-\nabla U(r_0(t)) = \int_{0}^{1} \langle\nabla^2 U(r_0(t)+s(\varphi(t)+x - r_0(1)),\varphi(t)+x-r_0(1)\rangle\ \ud s,
\]
we can estimate
\[
\begin{aligned}
    \|\nabla U(r_0(t)+\varphi(t)+x - r_0(1))-\nabla U(r_0(t))\|_\mathcal{M} &\lesssim \int_{0}^{1} \|\nabla^2 U(r_0(t)+s(\varphi(t)+x - r_0(1))\|_\mathcal{M} \|\varphi(t)\|_\mathcal{M}\ \ud s 
    \\
    &\leq C\frac{\|\varphi(t)\|_\mathcal{M}}{t^{\frac{2}{3}(2+\alpha)}},
    \end{aligned}
\]
for a proper constant $C$. 

By Cauchy-Schwartz inequality, it holds
\[
    \begin{split}
    &\sup_{\|\psi\|_\mathcal{D}\leq 1} \bigg|\int_{1}^{+\infty} \langle \nabla P(t,\varphi_n(t))-\nabla P(t,\varphi(t)),\psi(t)\rangle\ \ud t\bigg| \\
    &\leq \sup_{\|\psi\|_\mathcal{D}\leq 1} \int_{1}^{+\infty} t\| \nabla P(t,\varphi_n(t))-\nabla P(t,\varphi(t))\|_\mathcal{M}\frac{\|\psi(t)\|_\mathcal{M}}{t}\ \ud t\\
    &\leq \sup_{\|\psi\|_\mathcal{D}\leq 1} \bigg( \int_{1}^{+\infty} \frac{\|\psi(t)\|_\mathcal{M}^2}{t^2}\ \ud t\bigg)^{1/2} \bigg( \int_{1}^{+\infty} t^2 \| \nabla P(t,\varphi_n(t))-\nabla P(t,\varphi(t))\|_\mathcal{M}^2\ \ud t\bigg)^{1/2} \\
    & \leq 2 \bigg( \int_{1}^{+\infty} t^2 \| \nabla P(t,\varphi_n(t))-\nabla P(t,\varphi(t))\|_\mathcal{M}^2\ \ud t\bigg)^{1/2}.  
    \end{split}
\]

We have
\[
\begin{split}
    \| \nabla P(t,\varphi_n(t))-\nabla P(t,\varphi(t))\|_\mathcal{M}^2 &= \bigg| \int_{0}^{1} \nabla^2 P(t,\varphi(t)+\sigma(\varphi_n(t)-\varphi))(\varphi_n(t)-\varphi(t))\ \ud \sigma \bigg|^2\\
    & \leq \bigg( \int_{0}^{1} \|\nabla^2 P(t,\varphi(t)+\sigma(\varphi_n(t)-\varphi(t)))(\varphi_n(t)-\varphi(t))\|_\mathcal{M}\ \ud \sigma \bigg)^2\\
    & \leq \bigg( \int_{0}^{1} \frac{\|\varphi_n(t)-\varphi(t)\|_\mathcal{M}}{t^{\frac{2}{3}(2+\alpha)}}\ \ud \sigma \bigg)^2 \\
    & = \frac{\|\varphi_n(t)-\varphi(t)\|_\mathcal{M}^2}{t^{\frac{4}{3}(2+\alpha)}}.
\end{split}
\]

Then, using Proposition \ref{prop:compact_emb}, we have
\[
    \begin{split}
    \bigg( \int_{1}^{+\infty} t^2 \| \nabla P(t,\varphi_n(t))-\nabla P(t,\varphi(t))\|_\mathcal{M}^2\ \ud t\bigg)^{1/2} & \leq \bigg( \int_{1}^{+\infty} \frac{\|\varphi_n(t)-\varphi(t)\|_\mathcal{M}^2}{t^{\frac{2}{3}(1+2\alpha)}}\ \ud t\bigg)^{1/2}\\
    &\leq \bigg( \int_{1}^{+\infty} \frac{\|\varphi_n(t)-\varphi(t)\|_\mathcal{M}^2}{t^{(2+\alpha)}}\ \ud t\bigg)^{1/2}\\
    & \leq \frac{2}{(1+\alpha)} \bigg( \int_{1}^{+\infty} \|\dot{\varphi}_n(t)-\dot{\varphi}(t)\|_\mathcal{M}^2\ \ud t\bigg)^{1/2}\\
    & = \frac{2}{(1+\alpha)}\|\varphi_n - \varphi\|_\mathcal{D},
    \end{split}
\]
from which we conclude.

\smallskip
\noindent $\bullet$ {\bf Case $\alpha \in (1/2,2)$ and $r_0$ hyperbolic-parabolic.}

In the hyperbolic-parabolic setting, we exploit the decomposition of the action given by \eqref{eq:decomposed_action}. By studying the terms $\mathcal{A}_K$ and $\mathcal{A}_{K_1,K_2}$ independently, \eqref{eq:regularity_potential} can simply be proved by following the same arguments as the ones in the hyperbolic and parabolic settings. 
\end{proof}

As in \cite{PolimeniTerracini}, once it is proved that $\mathcal{A}$ is well-defined in $\mathcal{D}^{1,2}_0(1,+\infty)$, one aims to apply the {\em direct method}, as presented in Section~\ref{sec:existence}, in order to prove the existence of minimizers. Once this is done, we need to prove the following link between minimizers of $\mathcal{A}$ in $\mathcal{D}^{1,2}_0(1,+\infty)$ and free-time minimizers of the Lagrangian action. To this end, we state the following result, which parallels \cite[Corollary 6.3]{PolimeniTerracini}. 

Before giving the details, we observe that if the renormalization is not needed (that is, in the case $\alpha > 1$ and when $r_0$ is hyperbolic), the conclusion follows from Hamilton's Principle of Least Action combined with Marchal's Theorem on the absence of collisions for minimal trajectories.

\begin{prop}[Renormalized Action Principle]\label{th:ren_act_pr}
Fix an initial configuration $x \in \mathcal{X}$ and consider an expansive motion $\gamma(t)$ of the Newtonian $N$-body problem of the form 
    \begin{displaymath}
        \gamma(t)=r_0(t) + \varphi(t)+x-r_0(1), \ t \ge 1,
    \end{displaymath}
    where $\varphi\in\mathcal{D}_0^{1,2}(1,+\infty)$ minimizes the renormalized action \eqref{def:renormalized_action} in any of the settings of Theorems \ref{thm:hyp_alpha_expansive_1}, \ref{thm:hyp_alpha_expansive_2}, \ref{thm:par_alpha} and \ref{thm:HP_alpha} for $\alpha\in(0,1)$, or the Lagrangian action in any of the settings of Theorems \ref{thm:hyp_alpha_expansive_1}, \ref{thm:par_alpha} and \ref{thm:HP_alpha} {for the other admitted values of $\alpha$}.
    Then, $\gamma$ is a free-time minimizer of the Lagrangian action (and therefore it is a geodesic ray for the Jacobi-Maupertuis' metric) and solves Newton's equations \eqref{eq:NBP} for all $t \in (1,+\infty)$ (or for all $t \in [1, +\infty)$ if $x \in \Omega$).
\end{prop}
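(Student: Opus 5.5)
The plan is to follow the scheme of \cite[Corollary 6.3]{PolimeniTerracini} and reduce the statement to three facts: (a) a global minimizer $\varphi$ of $\mathcal{A}$ is a \emph{fixed-time} minimizer on every compact interval; (b) it is collision-free in the interior, hence solves \eqref{eq:NBP}; (c) it has energy $h$ and minimizes among free-time competitors, which follows from an asymptotic comparison at infinity. Here $h=\frac12\|a\|_\mathcal{M}^2$ in the hyperbolic and hyperbolic-parabolic settings, and $h=0$ in the parabolic one.

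\textbf{Fixed-time minimality and regularity.} Fix $1\le t_1<t_2$ and a competitor $\sigma\in\mathcal{C}(\gamma(t_1),\gamma(t_2),t_2-t_1)$. Replacing $\gamma$ by $\sigma$ on $[t_1,t_2]$ yields a new perturbation $\tilde\varphi\in\mathcal{D}_0^{1,2}(1,+\infty)$, since $\tilde\varphi-\varphi$ has compact support. The renormalizing terms $-U(r_0)$ and $-\langle\mathcal{M}\ddot r_0,\varphi\rangle$ are affine in $\varphi$. Integrating by parts on $[t_1,t_2]$ with $\dot\varphi=\dot\gamma-\dot r_0$, the difference $\mathcal{A}(\tilde\varphi)-\mathcal{A}(\varphi)$ equals exactly $\mathcal{A}_L(\sigma)-\mathcal{A}_L(\gamma|_{[t_1,t_2]})$, because the boundary terms cancel at equal endpoints. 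Hence $\gamma|_{[t_1,t_2]}$ minimizes $\mathcal{A}_L$ with fixed endpoints and time. Theorem \ref{thm_marchal} then gives $\gamma(t)\in\Omega$ for $t\in(t_1,t_2)$; letting $t_1\downarrow1$ and $t_2\uparrow\infty$ gives $\gamma(t)\in\Omega$ for all $t>1$. If $x\in\Omega$, the same holds at $t=1$ by continuity. Proposition \ref{prop:reg} then makes $\mathcal{A}$ differentiable at $\varphi$, so $\ud\mathcal{A}(\varphi)=0$ and elliptic bootstrapping give that $\gamma$ is $C^2$ and solves \eqref{eq:NBP}. When $\alpha>1$ in the hyperbolic case, $\mathcal{A}$ is the plain action and this step is immediate.

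\textbf{Energy and free-time minimality.} This is the main obstacle. I would first show that the energy $E=\frac12\|\dot\gamma\|^2_\mathcal{M}-U(\gamma)$, which is constant, equals $h$. Since $\int\|\dot\varphi\|^2<\infty$ and $\dot\varphi$ is a solution minus a reference velocity, one expects $\dot\varphi(t)\to0$ along a sequence of times, while $\dot r_0(t)\to a$ (respectively $0$) and $U(\gamma(t))\to0$. The last limit holds because $\|\varphi(t)\|\le\|\varphi\|_\mathcal{D}\sqrt{t-1}=o(t)$ by \eqref{dis_space_D012}, and, in the parabolic case, this also requires the parabolic rate to dominate $o(t^{1/2})$, i.e.\ $\frac{2}{2+\alpha}>\frac12$. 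Alternatively, one can use a time-reparametrization variation $\gamma(\lambda(t))$ compactly supported near a finite interval, in the spirit of the inner variation: stationarity of $\mathcal{A}$ under such variations forces $E=h$.

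For free-time minimality, take $[t_1,t_2]$ and a competitor $\sigma$ on an interval of length $T'$. I would argue by contradiction. Suppose $\mathcal{A}_L(\sigma)+hT'<\mathcal{A}_L(\gamma|_{[t_1,t_2]})+h(t_2-t_1)-\epsilon$. Glue $\sigma$ and then follow $\gamma$ shifted in time by $\tau=T'-(t_2-t_1)$, and reconnect to the original $\gamma$ at a large time $S$ through a short transition path. The key estimate is that the cost of absorbing the time shift $\tau$ at infinity is asymptotically $-h\tau+o(1)$ as $S\to\infty$. This follows from the expansions: to leading order, $\gamma(t+\tau)-\gamma(t)\approx\dot\gamma\,\tau$, and the kinetic-plus-potential excess of the reconnection is $h\tau$ plus terms controlled by Hardy's inequality (Proposition \ref{dis_hardy}) and by the decay of $U$. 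This would produce $\tilde\varphi$ with $\mathcal{A}(\tilde\varphi)<\mathcal{A}(\varphi)$, which contradicts minimality.

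Finally, free-time minimizers at energy $h$ are geodesic rays of $j^\alpha_h$ by the Maupertuis principle. The delicate part is the reconnection estimate in the parabolic and hyperbolic-parabolic settings, where $\dot\gamma$ is not asymptotically constant; there I would rescale the homothetic reference curve $r_0$ instead of translating in time.
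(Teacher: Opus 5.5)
Your skeleton is sound and matches the paper's proof, which simply imports the free-time claim from \cite[Corollary 6.3]{PolimeniTerracini} and then applies Marchal. Your ordering is in fact cleaner. Theorem~\ref{thm_marchal} only requires fixed-time minimality, which your compactly supported replacement plus integration by parts correctly gives. Collision-freeness then supplies the energy conservation that the free-time step needs. The identification $E=h$ via $\dot\varphi(t_k)\to0$ is also correct. Your ``alternative'' with compactly supported inner variations would only show that $E$ is constant, not its value.

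The gap is in the free-time step. You rightly call it the main obstacle, but you do not carry it out, and the mechanism you sketch would fail. Take a \emph{short} transition from $\gamma(S-\tau)$ to $\gamma(S+\delta)$ in time $\delta$. In the hyperbolic case it must cover a displacement of order $\|a\|_{\mathcal M}(\delta+\tau)$. Its excess cost over $\gamma|_{[S-\tau,S+\delta]}$ is then about $h\tau+h\tau^2/\delta$, which does not tend to $h\tau$ as $S\to+\infty$ with $\delta$ fixed, and blows up as $\delta\to0$. Three further points:
the tail must cost $+h\tau$, since this is what offsets $hT'-h(t_2-t_1)=h\tau$, not $-h\tau$;
the relevant limit is in the \emph{length} of the transition, not in $S$;
neither Hardy's inequality nor a separate rescaling of $r_0$ in the parabolic and hyperbolic-parabolic cases is needed.

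To close the gap, absorb the shift slowly along $\gamma$ itself. Choose $S\ge t_1+T'$ and $L>|\tau|$, set $\lambda=(L+\tau)/L$, and define
$\tilde\gamma=\gamma$ on $[1,t_1]$;
$\tilde\gamma(t)=\sigma(t-t_1)$ on $[t_1,t_1+T']$;
$\tilde\gamma(t)=\gamma(t-\tau)$ on $[t_1+T',S]$;
$\tilde\gamma(t)=\gamma\bigl(S-\tau+\lambda(t-S)\bigr)$ on $[S,S+L]$;
$\tilde\gamma=\gamma$ on $[S+L,+\infty)$.
Changing variables and using $\frac12\|\dot\gamma\|_{\mathcal M}^2-U(\gamma)\equiv h$ gives the exact identity
\[
\int_S^{S+L}L(\tilde\gamma,\dot{\tilde\gamma})\,\ud t-\int_{S-\tau}^{S+L}L(\gamma,\dot\gamma)\,\ud s
=h\tau\,\frac{L+\tau}{L}+\frac{\tau^2}{L(L+\tau)}\int_{S-\tau}^{S+L}U(\gamma(s))\,\ud s .
\]
Since $U(\gamma)$ is bounded for large times, the right-hand side tends to $h\tau$ as $L\to+\infty$. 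Hence
\[
\mathcal{A}_L(\tilde\gamma|_{[1,S+L]})-\mathcal{A}_L(\gamma|_{[1,S+L]})<-\epsilon+o(1).
\]
You may assume $\mathcal{A}_L(\sigma)<+\infty$, and $\tilde\varphi-\varphi$ is supported in $[t_1,S+L]$. Your integration-by-parts identity then yields $\mathcal{A}(\tilde\varphi)<\mathcal{A}(\varphi)$, a contradiction. This argument uses only energy conservation and works verbatim in the hyperbolic, parabolic and hyperbolic-parabolic regimes.
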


\begin{proof}
    Accordingly to the subclass of expansive motion under consideration, fix a reference path $r_0$.
    
    Suppose that a curve $\varphi\in\mathcal{D}_0^{1,2}(1,+\infty)$ is a minimizer of the renormalized Lagrangian action (or the Lagrangian action for the hyperbolic case with $\alpha>1$) and consider the associated expansive motion $\gamma(t) = r_0(t) + \varphi(t) + x - r_0(1)$.  By Hamilton's Principle of Least Action ({it follows from the differentiability of the renormalized action on non-collision sets, proved in Proposition \ref{prop:reg}}), since $\varphi$ is a minimizer, it is a solution of the Euler-Lagrange equations associated with $ \mathcal{A}$:
    \begin{equation}\label{eq:ELphi}
    \mathcal{M}\ddot{\varphi}(t) = \nabla U^\alpha(r_0(t) + \varphi(t) + x - r_0(1)) - \mathcal{M}\ddot{r}_0(t)
    \end{equation}
    at any time $t\in[1,+\infty)$ such that $\gamma(t)\in\Omega$. 

    \textbf{Claim:} $\gamma$ is a free-time minimizer for the Lagrangian action at its energy level.
    
    By the claim, we can apply Marchal's Principle (Theorem \ref{thm_marchal}), which states that $\gamma(t)$ has no collisions for any $t\in(1,+\infty)$.  {We can thus  conclude that $\varphi$ solves \eqref{eq:ELphi} and, equivalently, that $\gamma(t)$ is a solution of equations \eqref{eq:NBP} for any $t\in(1,+\infty)$.}

    For the hyperbolic case with $\alpha\in(0,1)$, and for the parabolic and hyperbolic cases, the proof of the claim can be found in \cite[Corollary 6.3]{PolimeniTerracini}. In the hyperbolic setting with $\alpha\in(1,+\infty)$, the claim can be proved using the same arguments as in \cite[Corollary 6.3]{PolimeniTerracini}, taking into account that in this case the renormalization of the action is not needed.

\end{proof}


\section{Existence of minimal expansive solutions}\label{sec:existence}

This section is devoted to proving the existence of minimizers of $\mathcal A$. These minimizers will yield the expansive solutions sought in this work. 

Our approach relies on the direct method of the calculus of variations: to apply this method, we establish that the action functional is coercive and weakly lower semicontinuous on the space $\mathcal{D}_0^{1,2}(1,+\infty)$. Both properties are analyzed separately for the hyperbolic, parabolic, and hyperbolic-parabolic regimes, taking into account the different possible values of the homogeneity parameter~$\alpha$.

\subsection{Coercivity estimates}\label{sec:coercivity}

In what follows, we will prove the coercivity estimates for $\mathcal{A}$ on $\mathcal{D}^{1,2}_0(1,+\infty)$.
 
 
\noindent $\bullet$ {\bf Case $\alpha \in (1,+\infty)$ and $r_0$ hyperbolic \eqref{eq:hyperbolic_r_0}}.

We prove the case $\alpha > 1$ first. We decompose the Lagrangian action, as follows
\begin{displaymath}
\begin{split}
    \mathcal{A}(\varphi) &= \int_{1}^{+\infty} \frac{1}{2}\|\dot\varphi(t)\|_\mathcal{M}^2+ U(at + \varphi(t) + x - a)\ \ud t\\
    & = \sum_{i<j} m_i m_j \int_{1}^{+\infty} \frac{|\dot{\varphi}_{ij}(t)|^2}{2M}+ \frac{1}{|a_{ij}t + \varphi_{ij}(t) + x_{ij} - a_{ij}|^\alpha}\ \ud t.
\end{split}
\end{displaymath}

For every pair $i\neq j$, direct manipulations, and the fact that $|\varphi(t)|\le \|\varphi\|_{\mathcal D} \sqrt{t}$, imply that 
\[
\begin{split}
    \int_{1}^{+\infty} \frac{1}{|a_{ij}t + \varphi_{ij}(t) + x_{ij} - a_{ij}|^\alpha}\ \ud t & \ge \int_{1}^{+\infty} \frac{1}{(|a_{ij}|t + |\varphi_{ij}(t)| + |x_{ij} - a_{ij}|)^\alpha}\ \ud t\\
    & \ge \int_{1}^{+\infty} \frac{1}{(|a_{ij}|t + \|\varphi_{ij}\|_\mathcal{D}t^{1/2} + |x_{ij} - a_{ij}|)^\alpha}\ \ud t\\
    & = \int_{1}^{+\infty} \frac{2s}{(|a_{ij}|s^2 + \|\varphi_{ij}\|_\mathcal{D}s + |x_{ij} - a_{ij}|)^\alpha}\ \ud s\\
    & =\frac{2}{|a_{ij}|^\alpha} \int_{1}^{+\infty} \frac{s}{\big(s^2 + \frac{\|\varphi_{ij}\|_\mathcal{D}}{|a_{ij}|}s + \frac{|x_{ij} - a_{ij}|}{|a_{ij}|}\big)^\alpha}\ \ud s\\
    & = \frac{2^{1+2\alpha}|a_{ij}|^\alpha}{\|\varphi_{ij}\|_{\mathcal{D}}^{2\alpha}} \int_{1}^{+\infty} \frac{s}{\bigg(\big( \frac{2|a_{ij}|}{\|\varphi_{ij}\|_{\mathcal{D}}}s +1 \big)^2 - 1 + \frac{4|x_{ij}-a_{ij}| |a_{ij}|}{\|\varphi_{ij}\|^2_{\mathcal{D}}}\bigg)^\alpha}\ \ud s\\
    & = \frac{2^{-1+2\alpha}|a_{ij}|^{-2+\alpha}}{\|\varphi_{ij}\|_{\mathcal{D}}^{2(-1+\alpha)}} \int_{\frac{2|a_{ij}|}{\|\varphi_{ij}\|_\mathcal{D}}}^{+\infty} \frac{\tau}{\bigg((\tau+1)^2-1+\frac{4|x_{ij}-a_{ij}| |a_{ij}|}{\|\varphi_{ij}\|^2_{\mathcal{D}}}\bigg)^\alpha}\ \ud \tau\\
    & \ge \frac{2^{-1+2\alpha}|a_{ij}|^{-2+\alpha}}{\|\varphi_{ij}\|_{\mathcal{D}}^{2(-1+\alpha)}} \int_{\frac{2|a_{ij}|}{\|\varphi_{ij}\|_\mathcal{D}}}^{+\infty} \frac{\tau}{(\tau+1)^{2\alpha}}\ \ud \tau,
\end{split}
\]
where the last inequality holds for $\|\varphi_{ij}\|_\mathcal{D}$ large enough. Now, we consider $\|\varphi_{ij}\|_\mathcal{D}$ large enough so that we can study the integral in the two intervals $\big[\frac{2|a_{ij}|}{\|\varphi_{ij}\|_\mathcal{D}},1\big]$ and $[1,+\infty)$. In $[1,+\infty)$, the integral is finite and does not depend on $\|\varphi_{ij}\|_\mathcal{D}$. Instead, if $\tau \in (0,1)$, then $\frac{\tau}{(\tau+1)^\alpha}\le \frac{\tau}{4^\alpha}$, hence
\[
\int_{\frac{2|a_{ij}|}{\|\varphi_{ij}\|_\mathcal{D}}}^{1} \frac{\tau}{(\tau+1)^{2\alpha}}\ \ud \tau \ge C\int_{\frac{2|a_{ij}|}{\|\varphi_{ij}\|_\mathcal{D}}}^{1} \tau\ \ud \tau = \frac{C}{2}\bigg(1-\frac{4|a_{ij}|^2}{\|\varphi_{ij}\|_\mathcal{D}^2}\bigg)\ge 0,
\]
if $\|\varphi\|_{\mathcal D}$ is large enough.

We have thus proved that
\begin{displaymath}
\begin{split}
    \mathcal{A}(\varphi) \ge  \sum_{i<j} m_i m_j \bigg( \frac{\|\varphi_{ij}\|_\mathcal{D}^2}{2M}+ \frac{C
    }{\|\varphi_{ij}\|_{\mathcal{D}}^{2(\alpha-1)}}\bigg),
\end{split}
\end{displaymath}
for a proper constant $C$.

\noindent $\bullet$ {\bf Case $\alpha \in (1/2,1)$ and $r_0$ hyperbolic \eqref{eq:hyperbolic_r_0}}.
In this case, we get
\[
\begin{split}
\mathcal{A}(\varphi) &= \int_{1}^{+\infty} \frac{1}{2}\|\dot\varphi(t)\|_\mathcal{M}^2+ U(at + \varphi(t) + x - a)-U(at)\ \ud t \\
&= \sum_{i<j} m_i m_j \int_{1}^{+\infty} \frac{|\dot{\varphi}_{ij}(t)|^2}{2M}+ \frac{1}{|a_{ij}t + \varphi_{ij}(t) + x_{ij} - a_{ij}|^\alpha}-\frac{1}{|a_{ij}t|^\alpha}\ \ud t.
\end{split}
\]

Following the same arguments as before, for every pair $i\neq j$, it holds that, provided that $\|\varphi_{ij}\|_\mathcal{D}$ large enough
\[
    \int_{1}^{+\infty} \frac{1}{|a_{ij}t + \varphi_{ij}(t) + x_{ij} - a_{ij}|^\alpha}-\frac{1}{|a_{ij}t|^\alpha}\ \ud t 
    \ge \frac{2^{-1+2\alpha}|a_{ij}|^{-2+\alpha}}{\|\varphi_{ij}\|_{\mathcal{D}}^{2(-1+\alpha)}} \int_{\frac{2|a_{ij}|}{\|\varphi_{ij}\|_\mathcal{D}}}^{+\infty} \bigg(\frac{1}{(\tau+1)^{2\alpha}}-\frac{1}{\tau^{2\alpha}}\bigg)\tau\ \ud \tau.
\]
Again, we study the integral of the two two intervals $[1,+\infty)$, where it is finite and does not depend on $\|\varphi_{ij}\|_\mathcal{D}$, and on $\big[\frac{2|a_{ij}|}{\|\varphi_{ij}\|_\mathcal{D}},1\big]$. In this last interval, it holds
\[
\int_{\frac{2|a_{ij}|}{\|\varphi_{ij}\|_\mathcal{D}}}^{1} \bigg(\frac{1}{(\tau+1)^{2\alpha}}-\frac{1}{\tau^{2\alpha}}\bigg)\tau\ \ud \tau \sim \int_{\frac{2|a_{ij}|}{\|\varphi_{ij}\|_\mathcal{D}}}^{1} -\tau^{1-2\alpha}\ \ud \tau = -\frac{1}{2(1-\alpha)}\bigg(1+\frac{2^{2(1-\alpha)}|a_{ij}|^{2(1-\alpha)}}{\|\varphi_{ij}\|_\mathcal{D}^{2(1-\alpha)}}\bigg).
\]

So, in this case,
\begin{displaymath}
\begin{split}
    \mathcal{A}(\varphi) \ge  \sum_{i<j} m_i m_j \bigg( \frac{\|\varphi_{ij}\|_\mathcal{D}^2}{2M}- C\,\|\varphi_{ij}\|_{\mathcal D}^{2(1-\alpha)}+ C\bigg),
\end{split}
\end{displaymath}
for a proper constant $C$.

\noindent $\bullet$ {\bf Case $\alpha \in (0,1/2]$ and $r_0$ hyperbolic \eqref{eq:hyperbolic_r_0}.}

In this case, we provide two proofs of coercivity estimates, with technical restriction on $\|a\|_{\mathcal M}$. Alternatively, we  give a condition for coercivity that does not depend on the configuration $a$, but depends on the starting time of the hyperbolic motion (or by considering $\mathcal{A}$ defined on some interval $[t_0,+\infty)$). As we prove in Section \ref{s:app}, these estimates are enough to prove the existence part of Theorem \ref{thm:hyp_alpha_expansive_2}, for every $a \in \Omega$.

\noindent {\em Estimates for $\|a\|_{\mathcal{M}}$ large.}\ 
We have
\[
\begin{split}
\mathcal{A}(\varphi) &= \int_{1}^{+\infty} \frac{\|\dot\varphi(t)\|_\mathcal{M}^2}{2} + U(r_0(t) + \varphi(t) + \tilde x) - U(r_0(t)) - \langle\ddot r_0(t),\varphi(t)\rangle_\mathcal{M}\ \ud t\\
&=\int_{1}^{+\infty} \frac{\|\dot\varphi(t)\|_\mathcal{M}^2}{2} + U(at+\eta(t) + \varphi(t) + \tilde x) - U(at+\eta(t)) - \langle\ddot \eta(t),\varphi(t)\rangle_\mathcal{M}\ \ud t
\\
& =\int_{1}^{+\infty} \frac{\|\dot\varphi(t)\|_\mathcal{M}^2}{2} + U(a t +\eta(t) + \varphi(t) + \tilde x) - U(at +\eta(t)) -\langle \nabla U(at +\eta(t)), \varphi(t)\rangle +\langle \varepsilon(t), \varphi(t)\rangle_{\mathcal M}\ \ud t,
\end{split}
\]
where we set $m=\lfloor 1/(2\alpha)\rfloor$, $\eta(t)=\sum_{k=1}^{m}\Gamma_k\,t^{1-k\alpha}$, and where the term $\varepsilon(t)$ satisfies 
\[
\varepsilon(t)=O(t^{-1-(m+1)\alpha}) \ \mbox{ as } \ t \rightarrow+\infty.
\]
From the asymptotic behavior of $\varepsilon(t)$, we get that, for some constant $C>0$,
\[
\int_{1}^{+\infty} \langle \varepsilon(t), \varphi(t) \rangle_{\mathcal M}\, dt \ge -C\int_{1}^{+\infty} t^{-(m+1)\alpha} \frac{\varphi(t)}{t}\, \ud t \ge -C \left(\int_{1}^{+\infty} \frac{\|\varphi(t)\|_\mathcal{M}^2}{t^2}\, \ud t\right)^{1/2} \ge -C \, \|\varphi\|_{\mathcal{D}},
\]
from the fact that $2(m+1)\alpha =2(\lfloor\frac{1}{2\alpha }\rfloor+1)\alpha>1$ and Hardy's inequality. 

Moreover, 
denoting $\psi=\varphi + \tilde x\in\mathcal{D}_0^{1,2}(1,+\infty)$, we also have
\[
\begin{split}
&U(r_0(t) + \varphi(t) + \tilde x) - U(r_0(t))  - \langle\nabla U(r_0(t)) ,\varphi(t)\rangle_\mathcal{M}\\
& \int_{0}^{1}\int_{0}^{1} \langle\nabla^2 U (r_0(t) + \bar s s\psi(t))\psi(t),s\psi(t)\rangle\ \ud \bar s\ \ud s.
\end{split}
\]
By the homogeneity of the potential, we have
\[
\nabla^2 U (r_0(t) + \bar s s\psi(t)) = \frac{\nabla^2 U (\frac{r_0(t) + \bar s s\psi(t)}{\|r_0(t) + \bar s s\psi(t)\|_\mathcal{M}})}{\|r_0(t) + \bar s s\psi(t)\|_\mathcal{M}^{2+\alpha}} = \frac{\nabla^2 U(z(t))}{\|a\|_\mathcal{M}^{2+\alpha}t^{2+\alpha}(1 + o(t^{-\alpha}))},
\]
where $\|z(t)\|_{\mathcal M}=1$ and $z(t)\rightarrow \hat a = \frac{a}{\|a\|_\mathcal{M}}\in\Omega$ as $t\rightarrow+\infty$, and it holds
\[
\frac{\langle\nabla^2 U(\hat a)\psi(t),\psi(t)\rangle}{\|a\|_\mathcal{M}^{2+\alpha}t^{2+\alpha}(1 + o(t^{-\alpha}))} \ge -C\frac{\|\psi(t)\|_\mathcal{M}^2}{\|a\|_\mathcal{M}^{2+\alpha}t^{2+\alpha}},
\]
where $C=\max_{z\in\mathcal
E}\nabla^2 U(z)$. Clearly, if $\|a\|_\mathcal{M}$ is large enough, then $\frac{C}{\|a\|_\mathcal{M}^{2+\alpha}}$ is sufficiently small. Then, using Hardy inequality:
\[
\begin{split}
&\int_{1}^{+\infty}\frac{1}{2}\|\dot\psi(t)\|_\mathcal{M}^2 + U(r_0(t) + \psi(t)) - U(r_0(t))  - \langle\nabla U(r_0(t)) ,\psi(t)\rangle_\mathcal{M}\ \ud t\\
& \ge \int_{1}^{+\infty}\frac{1}{2}\|\dot\psi(t)\|_\mathcal{M}^2 -C\frac{\|\psi(t)\|_\mathcal{M}^2}{\|a\|_\mathcal{M}^{2+\alpha}t^{2+\alpha}}\ \ud t\\
& \ge  \bigg( \frac{1}{2} -  \frac{4C}{(1+\alpha)^2\|a\|_\mathcal{M}^{2+\alpha}}\bigg)\|\psi\|_\mathcal{D}^2,
\end{split}
\]
where the term $\frac{1}{2} -  \frac{4C}{(1+\alpha)^2\|a\|_\mathcal{M}^{2+\alpha}}$ is positive if $\|a\|_\mathcal{M}$ is large enough.

\smallskip
\noindent {\em Estimates with $t_0>1$ large.} \ For $t_0\ge1$, we work with hyperbolic motions $\gamma:[t_0,+\infty)\rightarrow\Omega$,
\[
\gamma(t) = at +\sum_{k=1}^{m}\Gamma_k\,t^{1-k\alpha}+ \varphi(t) + x - a t_0 - \sum_{k=1}^{m}\Gamma_k t_0,\quad \varphi\in\mathcal{D}_0^{1,2}(t_0,+\infty),
\]
with $m=\lfloor 1/(2\alpha)\rfloor$, and where
\[
\mathcal{D}_0^{1,2}(t_0,+\infty) = \{\varphi\in H^1([t_0,+\infty))\ :\ \varphi(t_0)=0,\ \int_{t_0}^{+\infty}\|\dot\varphi(t)\|^2\ \ud t < +\infty\}
\]
and 
\[
\|\varphi\|_\mathcal{D} = \bigg(\int_{t_0}^{+\infty}\|\dot\varphi(t)\|_\mathcal{M}^2\ \ud t\bigg)^{1/2}.
\]

Following the same arguments as above, we get 
\[
\begin{split}
&\int_{t_0}^{+\infty}\frac{1}{2}\|\dot\psi(t)\|_\mathcal{M}^2 + U(r_0(t) + \psi(t)) - U(r_0(t))  - \langle\nabla U(r_0(t)) ,\psi(t)\rangle_\mathcal{M}\ \ud t\\
& \ge \int_{t_0}^{+\infty}\frac{1}{2}\|\dot\psi(t)\|_\mathcal{M}^2 -C\frac{\|\psi(t)\|_\mathcal{M}^2}{\|a\|_\mathcal{M}^{2+\alpha}t^{2+\alpha}}\ \ud t\\
& \ge \int_{t_0}^{+\infty}\frac{1}{2}\|\dot\psi(t)\|_\mathcal{M}^2 -C\frac{\|\psi(t)\|_\mathcal{M}^2}{\|a\|_\mathcal{M}^{2+\alpha}t_0^\alpha t^2}\ \ud t\\
& \ge \int_{1}^{+\infty}\frac{1}{2}\|\dot\psi(t)\|_\mathcal{M}^2 - \frac{4C}{t_0^\alpha\|a\|_\mathcal{M}^{2+\alpha}}\|\dot\psi(t)\|_\mathcal{M}^2\ \ud t\\
& = \bigg( \frac{1}{2} -  \frac{4C}{t_0^\alpha\|a\|_\mathcal{M}^{2+\alpha}}\bigg)\|\psi\|_\mathcal{D}^2,
\end{split}
\]
where the term $\frac{1}{2} -  \frac{4C}{t_0^\alpha\|a\|_\mathcal{M}^{2+\alpha}}$ is positive if $t_0$ is large enough.


\noindent  $\bullet$ {\bf Case $\alpha \in (0,2)$ and $r_0$ parabolic \eqref{eq:parabolic_eq}.}

In this case, we get
\begin{equation}
\label{e:A1}
\begin{split}
    \mathcal{A}(\varphi) &= \int_{1}^{+\infty} \frac{\|\dot\varphi(t)\|_{\mathcal M}^2}{2} + U(\beta b_m t^{\frac{2}{2+\alpha}}+\varphi(t)+\Tilde{x}) - U(\beta b_m t^{\frac{2}{2+\alpha}}) - \langle \nabla U(\beta b_m t^{\frac{2}{2+\alpha}}),\varphi(t)\rangle\ \ud t \\
    & \ge \int_{1}^{+\infty}\frac{\|\dot\varphi(t)\|_{\mathcal M}^2}{2}  + \frac{U_{min}}{\|r_0(t)+\varphi(t)+\Tilde{x}\|^\alpha} - \frac{U_{min}}{\|r_0(t)\|^\alpha} + \frac{\langle U_{min}\mathcal{M}r_0(t),\varphi(t)\rangle}{\|r_0(t)\|^{2+\alpha}}\ \ud t.
\end{split}
\end{equation}

We write
\begin{displaymath}
    \|r_0(t)+\varphi(t)+\Tilde{x}\|^2 = u + v,
\end{displaymath}
where
\begin{displaymath}
    \begin{split}
        & u = \|r_0(t)\|^2,\\
        & v = 2\langle \mathcal{M}r_0(t),\varphi(t)\rangle + 2\langle \mathcal{M}\varphi(t),\Tilde{x}\rangle + 2\langle \mathcal{M} r_0(t),\Tilde{x}\rangle + \|\varphi(t)\|^2 + \|\Tilde{x}\|^2.
    \end{split}
\end{displaymath}

Then,
\begin{equation*}
    \begin{aligned}
        \|r_0(t)+\varphi(t)+\Tilde{x}\|^{-\alpha} & = (u+v)^{-\frac{\alpha}{2}}
        \\
        &= u^{-\frac{\alpha}{2}} -\frac{\alpha}{2}u^{-\frac{2+\alpha}{2}}v + \frac{\alpha(2+\alpha)}{4}\int_{0}^{1}\int_{0}^{1}\langle(u+t_1 t_2 v)^{-\frac{4+\alpha}{2}}v,v\rangle t_1\ \ud t_1\ \ud t_2.
    \end{aligned}
\end{equation*}
Since the integral in the last expression is positive, it follows
\[
    \begin{split}
    \|r_0(t)+&\varphi(t)+\Tilde{x}\|^{-\alpha} 
    \\
    &\geq u^{-\frac{\alpha}{2}} -\frac{1}{2}u^{-\frac{2+\alpha}{2}}v \\
    &= \|r_0(t)\|^{-\alpha} -\frac{\alpha}{2\|r_0(t)\|^{2+\alpha}}(2\langle \mathcal{M}r_0(t),\varphi(t)\rangle + 2\langle \mathcal{M}\varphi(t),\Tilde{x}\rangle + 2\langle \mathcal{M} r_0(t),\Tilde{x}\rangle + \|\varphi(t)\|^2 + \|\Tilde{x}\|^2)\\
    & = \|r_0(t)\|^{-\alpha} -\alpha\bigg(\frac{\langle \mathcal{M}r_0(t),\varphi(t)\rangle}{\|r_0(t)\|^{2+\alpha}} - \frac{\langle \mathcal{M}\varphi(t),\Tilde{x}\rangle}{\|r_0(t)\|^{2+\alpha}} - \frac{\langle \mathcal{M} r_0(t),\Tilde{x}\rangle}{\|r_0(t)\|^{2+\alpha}} -\frac{1}{2}\frac{\|\varphi(t)\|^2}{\|r_0(t)\|^{2+\alpha}} - \frac{1}{2}\frac{\|\Tilde{x}\|^2}{\|r_0(t)\|^{2+\alpha}}\bigg).
    \end{split}    
\]

Putting the last chain of inequalities in \eqref{e:A1}, we get
\begin{equation}
\label{e:AA2}
\mathcal{A}(\varphi) \ge \int_{1}^{+\infty} \frac{\|\dot\varphi(t)\|_\mathcal{M}^2}{2} - \alpha U_{min}\bigg( \frac{\langle \mathcal{M}\varphi(t),\Tilde{x}\rangle}{\|r_0(t)\|^{2+\alpha}} + \frac{\langle \mathcal{M} r_0(t),\Tilde{x}\rangle}{\|r_0(t)\|^{2+\alpha}} +\frac{1}{2}\frac{\|\varphi(t)\|^2}{\|r_0(t)\|^{2+\alpha}} + \frac{1}{2}\frac{\|\Tilde{x}\|^2}{\|r_0(t)\|^{2+\alpha}}\bigg) \ \ud t.
\end{equation}

The two terms in the right-hand side of \eqref{e:AA2} that do not depend on $\varphi$ are bounded by a constant:
\[
\begin{split}
&-\alpha\int_{1}^{+\infty} U_{min}\bigg(  \frac{\langle \mathcal{M} r_0(t),\Tilde{x}\rangle}{\|r_0(t)\|^{2+\alpha}} + \frac{\|\Tilde{x}\|^2}{2\|r_0(t)\|^{2+\alpha}}\bigg)\ \ud t\\
&\ge -\alpha\int_{1}^{+\infty} U_{min}\bigg(\frac{\|\Tilde{x}\|}{\beta^{1+\alpha} t^{\frac{2(1+\alpha)}{2+\alpha}}} + \frac{\|\Tilde{x}\|^2}{2\beta^{1+\alpha} t^{\frac{2(1+\alpha)}{2+\alpha}}}\bigg)\ \ud t \\
&\ge C.
\end{split}
\]
Moreover, the third addendum in \eqref{e:AA2} has at most linear growth in $\|\varphi\|_{\mathcal{D}}$, since 
\[
\begin{split}
-\alpha U_{min}\int_{1}^{+\infty} \frac{\langle \mathcal{M}\varphi(t),\Tilde{x}\rangle}{\|r_0(t)\|^{2+\alpha}}\ \ud t &\ge - \alpha U_{min}\int_{1}^{+\infty} \frac{\|\varphi(t)\|\|\Tilde{x}\|}{\beta^{2+\alpha}t^2}\ \ud t \\
&\ge - \frac{\alpha U_{min}\|\tilde x\|}{\beta^{2+\alpha}} \bigg(\int_{1}^{+\infty} \frac{\|\varphi(t)\|^2}{t^2}\ \ud t \bigg)^{1/2}\bigg(\int_{1}^{+\infty} \frac{1}{t^2}\ \ud t\bigg)^{1/2} \\
& \ge - \frac{2\alpha U_{min}\|\tilde x\|}{\beta^{2+\alpha}} \|\varphi\|_\mathcal{D},
\end{split}
\]
thanks to Hardy's inequality.

Lastly, it remains to treat the following term
\[
\begin{split}
    -\frac{\alpha U_{min}}{2}\int_{1}^{+\infty} \frac{\|\varphi(t)\|^2}{\|r_0(t)\|^{2+\alpha}}\ \ud t = -\frac{\alpha U_{min}}{2}\int_{1}^{+\infty} \frac{\|\varphi(t)\|^2}{\beta^{2+\alpha}t^2}\ \ud t \ge -\frac{4\alpha}{(2+\alpha)^2}\|\varphi\|_\mathcal{D}^2,
\end{split}
\]
that we cannot prevent to be (possibly) quadratic in $\|\varphi\|_{\mathcal{D}}$. 

However, all in all, we obtain


\[
\mathcal{A}(\varphi) \ge \bigg(\frac{1}{2}-\frac{4\alpha}{(2+\alpha)^2}\bigg)\|\varphi\|_\mathcal{D}^2 -\frac{2\alpha U_{min}\|\tilde x\|}{\beta^{2+\alpha}}\|\varphi\|_\mathcal{D} + C,
\]
where $C$ is a proper constant and the coefficient $\frac{1}{2}-\frac{4\alpha}{(\alpha+2)^2}$ is positive for all $\alpha\in(0,2)$.

This concludes the proof in this case.

\noindent $\bullet$ {\bf Case $\alpha\in (1/2,2)$ and $r_0$ hyperbolic-parabolic as in \eqref{eq:HP_eq}.}

By following exactly the same arguments adopted in the {\em pure} hyperbolic case above, and using the decomposition of the action given in \eqref{eq:decomposed_action}, we can show that coercivity estimates hold for $\mathcal{A}_K(\varphi)$. We prove that they hold for $\mathcal{A}_{K_1,K_2}(\varphi)$ as well. For simplicity, we will write $b_{ij}=b_{ij}^{K_{1,2}}$ and $\beta=\beta^{K_{1,2}}$. To this end, it is convenient to separate case $\alpha \in (1,2)$ from case $\alpha \in (1/2,1)$.

Assume first $\alpha \in (1,2)$, for which $\mathcal{A}(\varphi)$ is simply given by the {\em non-renormalized} version of the action.  Using the changes of coordinates $t=s^2$ and $s=\|\varphi\|_\mathcal{D}^{1/\alpha}u$, we get
\[
\begin{split}
    \int_{1}^{+\infty} &\frac{1}{|a_{ij}t + \beta b_{ij}t^{\frac{2}{2+\alpha}} + \varphi_{ij}(t) + \tilde x_{ij}|^\alpha}\ \ud t \\
    &\ge \int_{1}^{+\infty} \frac{1}{(|a_{ij}|t + |\beta b_{ij}|t^{\frac{2}{2+\alpha}}+ |\varphi_{ij}(t)| + |\tilde x_{ij}|)^\alpha}\ \ud t\\
    & \ge \int_{1}^{+\infty} \frac{1}{(|a_{ij}|t+ |\beta b_{ij}|t^{\frac{2}{2+\alpha}} + \|\varphi_{ij}\|_\mathcal{D}t^{1/2} + |\tilde x_{ij}|)^\alpha}\ \ud t\\
    & = \int_{1}^{+\infty} \frac{2s}{(|a_{ij}|s^2 + |\beta b_{ij}|s^{\frac{4}{2+\alpha}} + \|\varphi_{ij}\|_\mathcal{D}s + |\tilde x_{ij}|)^\alpha}\ \ud s\\
    & =\frac{2}{|a_{ij}|^\alpha} \int_{1}^{+\infty} \frac{s}{\big(s^2 + \frac{|\beta b_{ij}|}{|a_{ij}|}s^{\frac{4}{2+\alpha}} + \frac{\|\varphi_{ij}\|_\mathcal{D}}{|a_{ij}|}s + \frac{|\tilde x_{ij}|}{|a_{ij}|}\big)^\alpha}\ \ud s\\
    & = \frac{2\|\varphi_{ij}\|_\mathcal{D}^{2/\alpha}}{|a_{ij}|^\alpha} \int_{\frac{1}{\|\varphi_{ij}\|_\mathcal{D}^{1/\alpha}}}^{+\infty}
    \frac{u}{\bigg(\|\varphi_{ij}\|_\mathcal{D}^{2/\alpha}u^2 + \frac{|\beta b_{ij}|}{|a_{ij}|}\|\varphi_{ij}\|_\mathcal{D}^{\frac{4}{2+\alpha}}u^{\frac{4}{2+\alpha}}+\frac{\|\varphi_{ij}\|_\mathcal{D}^{\frac{1+\alpha}{\alpha}}}{|a_{ij}|}u+\frac{|\tilde x_{ij}|}{|a_{ij}|}\bigg)^\alpha}\ \ud u\\
    & = \frac{2}{|a_{ij}|^\alpha} \int_{\frac{1}{\|\varphi_{ij}\|_\mathcal{D}^{1/\alpha}}}^{+\infty}
    \frac{u}{\bigg(u^2 + \frac{|\beta b_{ij}|}{|a_{ij}|\|\varphi_{ij}\|_\mathcal{D}^{\frac{\alpha}{2+\alpha}}}u^{\frac{4}{2+\alpha}}+\frac{1}{|a_{ij}|\|\varphi_{ij}\|_\mathcal{D}^{\frac{1-\alpha}{\alpha}}}u+\frac{|\tilde x_{ij}|}{|a_{ij}|\|\varphi_{ij}\|_\mathcal{D}^{2/\alpha}}\bigg)^\alpha}\ \ud u\\
    &\ge \frac{2}{|a_{ij}|^\alpha} \int_{\frac{1}{\|\varphi_{ij}\|_\mathcal{D}^{1/\alpha}}}^{+\infty}
    \frac{u}{\bigg(u^2 + u^{\frac{4}{2+\alpha}}+u+1\bigg)^\alpha}\ \ud u,
\end{split}
\]
where the last inequality holds for $\|\varphi_{ij}\|_\mathcal{D}$ large enough. Taking $\|\varphi_{ij}\|_\mathcal{D}$ large, we split the integral: on $[1,+\infty)$, it is constant and does not depend on $\|\varphi_{ij}\|_\mathcal{D}$; on $\bigg[\frac{1}{\|\varphi_{ij}\|_\mathcal{D}^{1/\alpha}},1\bigg]$, it holds
\[
\int_{\frac{1}{\|\varphi_{ij}\|_\mathcal{D}^{1/\alpha}}}^{1} \frac{u}{\bigg(u^2 + u^{\frac{4}{2+\alpha}}+u+1\bigg)^\alpha}\ \ud u \ge c\, \int_{\frac{1}{\|\varphi_{ij}\|_\mathcal{D}^{1/\alpha}}}^{1} u\ \ud u = \frac{c}{2}\bigg(1 - \frac{1}{\|\varphi_{ij}\|_\mathcal{D}^{2/\alpha}}\bigg),
\]
for some positive constant $c$.

So, we proved
\[
\mathcal{A}_{K_1,K_2}(\varphi)_{ij} \ge C\|\varphi_{ij}\|_{\mathcal{D}}^2 + \frac{1}{|a_{ij}|^\alpha}\bigg(1 - \frac{1}{\|\varphi_{ij}\|_\mathcal{D}^{2/\alpha}}\bigg) + C \ge C\|\varphi_{ij}\|_{\mathcal{D}}^2+ C,
\]
for a proper constant $C$. This ends the proof of the case $\alpha\in (1,2)$.

Assume now $\alpha \in (1/2,1)$. In this case, $\mathcal{A}(\varphi)$ is defined as the renormalized version of the action. Taking this into account, we get
\[
\begin{split}
    \int_{1}^{+\infty} &\frac{1}{|a_{ij}t + \beta b_{ij}t^{\frac{2}{2+\alpha}} + \varphi_{ij}(t) + \tilde x_{ij}|^\alpha} - \frac{1}{|a_{ij}t|^\alpha}\ \ud t \\
    &\ge \int_{1}^{+\infty} \frac{1}{(|a_{ij}|t + |\beta b_{ij}|t^{\frac{2}{2+\alpha}}+ |\varphi_{ij}(t)| + |\tilde x_{ij}|)^\alpha} - \frac{1}{|a_{ij}t|^\alpha}\ \ud t\\
    & \ge \int_{1}^{+\infty} \frac{1}{(|a_{ij}|t+ |\beta b_{ij}|t^{\frac{2}{2+\alpha}} + \|\varphi_{ij}\|_\mathcal{D}t^{1/2} + |\tilde x_{ij}|)^\alpha} - \frac{1}{|a_{ij}t|^\alpha}\ \ud t\\
    & = \int_{1}^{+\infty} \frac{2s}{(|a_{ij}|s^2 + \|\varphi_{ij}\|_\mathcal{D}s + |\tilde x_{ij}| - \frac{1}{|a_{ij}s^2|^\alpha})^\alpha}\ \ud s\\
    & =\frac{2}{|a_{ij}|^\alpha} \int_{1}^{+\infty} \bigg(\frac{1}{\big(s^2 + \frac{\|\varphi_{ij}\|_\mathcal{D}}{|a_{ij}|}s + \frac{|\tilde x_{ij}|}{|a_{ij}|}\big)^\alpha} - \frac{1}{s^{2\alpha}}\bigg)s\ \ud s\\
    & = \frac{2\|\varphi_{ij}\|_\mathcal{D}^{2/\alpha}}{|a_{ij}|^\alpha} \int_{\frac{1}{\|\varphi_{ij}\|_\mathcal{D}^{1/\alpha}}}^{+\infty}
    \bigg(\frac{1}{\big(\|\varphi_{ij}\|_\mathcal{D}^{2/\alpha}u^2 + \frac{|\beta b_{ij}|}{|a_{ij}|}\|\varphi_{ij}\|_\mathcal{D}^{\frac{4}{2+\alpha}}u^{\frac{4}{2+\alpha}}+\frac{\|\varphi_{ij}\|_\mathcal{D}^{\frac{1+\alpha}{\alpha}}}{|a_{ij}|}u+\frac{|\tilde x_{ij}|}{|a_{ij}|}\big)^\alpha}-\frac{1}{\|\varphi_{ij}\|_\mathcal{D}^{2/\alpha}u^{2\alpha}}\bigg)u\ \ud u\\
    & = \frac{2}{|a_{ij}|^\alpha} \int_{\frac{1}{\|\varphi_{ij}\|_\mathcal{D}^{1/\alpha}}}^{+\infty}
    \bigg(\frac{1}{\big(u^2 + \frac{|\beta b_{ij}|}{|a_{ij}|\|\varphi_{ij}\|_\mathcal{D}^{\frac{\alpha}{2+\alpha}}}u^{\frac{4}{2+\alpha}}+\frac{1}{|a_{ij}|\|\varphi_{ij}\|_\mathcal{D}^{\frac{1-\alpha}{\alpha}}}u+\frac{|\tilde x_{ij}|}{|a_{ij}|\|\varphi_{ij}\|_\mathcal{D}^{2/\alpha}}\big)^\alpha}-\frac{1}{u^{2\alpha}}\bigg)u\ \ud u\\
    &\ge \frac{2}{|a_{ij}|^\alpha} \int_{\frac{1}{\|\varphi_{ij}\|_\mathcal{D}^{1/\alpha}}}^{+\infty}
    \bigg(\frac{1}{\big(u^2 + u^{\frac{4}{2+\alpha}}+u+1\big)^\alpha}-\frac{1}{u^{2\alpha}}\bigg)u\ \ud u,
\end{split}
\]
where the last inequality holds for $\|\varphi_{ij}\|_\mathcal{D}$ large enough. Taking $\|\varphi_{ij}\|_\mathcal{D}$ large, we split the integral once again: on $[1,+\infty)$, it is constant and does not depend on $\|\varphi_{ij}\|_\mathcal{D}$; on $\bigg[\frac{1}{\|\varphi_{ij}\|_\mathcal{D}^{1/\alpha}},1\bigg]$, it holds
\[
\begin{split}
\frac{2}{|a_{ij}|^{\alpha}}\int_{\frac{1}{\|\varphi_{ij}\|_\mathcal{D}^{1/\alpha}}}^{1} \bigg(\frac{1}{\big(u^2 + u^{\frac{4}{2+\alpha}}+u+1\big)^\alpha}-\frac{1}{u^{2\alpha}}\bigg)\ \ud u & \ge \frac{2}{|a_{ij}|^\alpha} \int_{\frac{1}{\|\varphi_{ij}\|_\mathcal{D}^{1/\alpha}}}^{+\infty} -u^{1-2\alpha}\ \ud u\\
& = \frac{1}{|a_{ij}|^\alpha(1-\alpha)}\bigg(\frac{1}{\|\varphi_{ij}\|_\mathcal{D}^{\frac{2(1-\alpha)}{\alpha}}}-1\bigg).
\end{split}
\]

So, we proved
\[
\mathcal{A}_{K_1,K_2}(\varphi)_{ij} \ge \int_{1}^{+\infty}\left(\frac{|\dot\varphi_{ij}(t)|^2}{2M}\right)\,\ud t + \frac{2}{|a_{ij}|^\alpha}\left( \frac{1}{\|\varphi_{ij}\|_\mathcal{D}^{\frac{2(1-\alpha)}{\alpha}}}-1\right) + C \ge \frac{\|\varphi_{ij}\|_{\mathcal{D}}^2}{2M}+C,
\]
for a proper constant $C$.


\subsection{Weak-lower semicontinuity of the action}\label{sec:wlsc}

In what follows, we prove the weak-lower semicontinuity of the action. Once again, we examine all the cases separately, and we recall that the cases corresponding to $\alpha=1$ were treated in \cite{PolimeniTerracini}.

\noindent $\bullet$ {\bf Case $\alpha >1$ and $r_0$ hyperbolic.}

This case is trivial, since the action does not need a renormalization, and hence the weak-lower semicontinuity of $\mathcal A$ on $\mathcal{D}^{1,2}_0(1,+\infty)$ directly follows from Fatou's Lemma.

\noindent $\bullet$ {\bf Case $\alpha \in (1/2,1)$ and $r_0$ hyperbolic.} 

Consider a sequence $(\varphi^n)_n$ in $\mathcal{D}_0^{1,2}(1,+\infty)$ converging weakly to some limit $\varphi\in\mathcal{D}_0^{1,2}(1,+\infty)$. By the properties of weak convergence the sequence is bounded on $\mathcal{D}_0^{1,2}(1,+\infty)$ and admits a subsequence $(\varphi^{n_k})_k$ converging uniformly on compact subsets of $[1,+\infty)$ (and hence pointwise in $[1,+\infty)$). 

The equality
\[
    \frac{1}{|a_{ij}t+  \varphi^n_{ij}(t) +x_{ij} - a_{ij} |} - \frac{1}{|a_{ij}t|} = -\alpha\int_{0}^{1} \frac{(a_{ij}t + s(\varphi^n_{ij}(t) +x_{ij} - a_{ij}))(\varphi^n_{ij}(t) +x_{ij} - a_{ij})}{|a_{ij}t + s(\varphi^n_{ij}(t) +x_{ij} - a_{ij})|^{2+\alpha}}\ \ud s
\]
is true for all $t$ sufficiently large, since it must be, for all $s\in(0,1)$,
\[
|a_{ij}t + s(x^0_{ij} - a_{ij} + \varphi^n_{ij}(t))| \geq |a_{ij}|t - s(|x^0_{ij} - a_{ij}| + \|\varphi^n_{ij}\|_\mathcal{D}\sqrt{t}) >0.
\]
We suppose that this happens for $\bar T = \bar T(k)$, with $k$ being a constant such that $|\varphi^n_{ij}(t)| \leq k\sqrt{t}$ for all $n$ and $t\in[1,+\infty)$.

On $[1,\bar T]$, we can use Fatou's Lemma and the pointwise convergence of the sequence to state that
\[
\int_{1}^{\bar{T}} \frac{1}{|a_{ij}t + \varphi^n_{ij}(t) + x_{ij} - a_{ij}|^\alpha}\ \ud t \leq \liminf_{n\rightarrow+\infty} \int_{1}^{\bar{T}} \frac{1}{|a_{ij}t + \varphi^n_{ij}(t) + x_{ij} - a_{ij}|^\alpha}\ \ud t.
\]

On $[\bar T,+\infty)$, we can use the dominated convergence Theorem. Indeed, since the sequence $(\varphi^n)_n$ is bounded, it holds:
\[
\begin{aligned}
    \int_{\bar T}^{+\infty} \!\!\frac{1}{|a_{ij}t+  \varphi^n_{ij}(t) +x_{ij} - a_{ij} |} - \frac{1}{|a_{ij}t|}\ \ud t &= -\alpha\!\int_{\bar T}^{+\infty} \!\int_{0}^{1} \!\frac{(a_{ij}t + s(\varphi^n_{ij}(t) +x_{ij} - a_{ij}))(\varphi^n_{ij}(t) +x_{ij} - a_{ij})}{|a_{ij}t + s(\varphi^n_{ij}(t) +x_{ij} - a_{ij})|^{2+\alpha}}\ \ud s\ \ud t\\
    & \le \alpha\int_{\bar T}^{+\infty} \int_{0}^{1} \frac{|\varphi^n_{ij}(t) +x_{ij} - a_{ij}|}{|a_{ij}t + s(\varphi^n_{ij}(t) +x_{ij} - a_{ij})|^{1+\alpha}}\ \ud s\ \ud t\\
    & = \alpha\int_{\bar T}^{+\infty} \int_{0}^{1} \frac{|\varphi^n_{ij}(t) +x_{ij} - a_{ij}|}{|(a_{ij}t + s(\varphi^n_{ij}(t) +x_{ij} - a_{ij}))^2|^{\frac{1+\alpha}{2}}}\ \ud s\ \ud t\\
    & \le \alpha3^{\frac{\alpha}{2}}\int_{\bar T}^{+\infty} \int_{0}^{1} \frac{|\varphi^n_{ij}(t) +x_{ij} - a_{ij}|}{(|a_{ij}t|^2 - s^2|\varphi^n_{ij}(t) +x_{ij} - a_{ij}|^2)^{\frac{1+\alpha}{2}}}\ \ud s\ \ud t\\
    & \le \alpha3^{\frac{\alpha}{2}}\int_{\bar T}^{+\infty} \int_{0}^{1} \frac{|kt^{1/2} +x_{ij} - a_{ij}|}{(|a_{ij}t|^2 - |kt^{1/2} +x_{ij} - a_{ij}|^2)^{\frac{1+\alpha}{2}}}\ \ud s\ \ud t\\
    &\sim \alpha3^{\frac{\alpha}{2}}\int_{\bar T}^{+\infty} \frac{kt^{1/2}}{|a_{ij}t|^{1+\alpha}}\ \ud t \\
    & = \alpha3^{\frac{\alpha}{2}}\int_{\bar T}^{+\infty} \frac{k}{|a_{ij}|^{1+\alpha} t^{\frac{1}{2}+\alpha}}\ \ud t, 
\end{aligned}
\]
where (for $\bar T$ large enough) we used the inequality
\[
\frac{|b+c|^2}{|b|^2 - |c|^2} \geq \frac{1}{3}, \qquad \text{for each }b,c\in\mathbb{R}^d \text{ such that }|b|\geq2|c|.
\]

We have thus proved that 
\[
     \mathcal{A}(\varphi) \leq \liminf_{n\rightarrow+\infty} \int_{1}^{+\infty} \frac{1}{2} \|\dot{\varphi}^n(t)\|_\mathcal{M}^2 + U(at + \varphi^n(t) + x - a) - U(at)\ \ud t.
\]

\noindent $\bullet$ {\bf Case $\alpha \in (0,1/2]$ and $r_0$ hyperbolic.}

We recall that our assumption in this case is that
\[
r_0(t) = at + \sum_{k=1}^{m} \Gamma_k t^{1-k\alpha},
\]
where $m=\lfloor 1/(2\alpha)\rfloor$. 

Since there exists $\bar T\ge1$ such that, for all $t\ge\bar T$ and for all $s\in(0,1)$, it holds 
\[
|a_{ij}t + \sum_{k=1}^{m} \gamma_{k,ij} t^{1-k\alpha} + s(\varphi_{ij}(t) + \tilde x_{ij})|\ge |a_{ij}|t - \sum_{k=1}^{m} |\gamma_{k,ij}| t^{1-k\alpha}  - s(|\varphi_{ij}(t)| + |\tilde x_{ij}|) > 0,
\]
for all $t\in[\bar T,\infty)$ we can write
\[
\frac{1}{|r_0(t)_{ij}+  \varphi^n_{ij}(t) +x_{ij} - a_{ij}|^\alpha} - \frac{1}{|r_0(t)_{ij}|^\alpha} = \int_{0}^{1} \frac{\ud}{\ud s} \frac{1}{|r_0(t)_{ij} + s(\varphi_{ij}(t)+\tilde x_{ij})|^\alpha}\ \ud s,
\]
where $(\varphi^n)_n$ is a sequence in $\mathcal{D}_0^{1,2}(1,+\infty)$ converging weakly to a function $\varphi\in\mathcal{D}_0^{1,2}(1,+\infty)$.

The proof of the weak-lower semicontinuity of the action is the same as in the hyperbolic case $\alpha \in (1/2,1)$, above: on $[1,\bar T]$ it follows from Fatou's Lemma and the pointwise convergence of  $(\varphi^n)_n$ on compact sets, while on $[\bar T,+\infty)$ it follows from the dominated convergence Theorem.

\noindent $\bullet$ {\bf Case $\alpha \in (0,2)$ and $r_0$ parabolic. }

Consider a sequence $(\varphi^n)_n\subset\mathcal{D}_0^{1,2}(1,+\infty)$ converging weakly in $\mathcal{D}_0^{1,2}(1,+\infty)$ to a function $\varphi$. 

We can write the renormalized action as:
\[
\begin{split}
\mathcal{A}(\varphi) = \int_{1}^{+\infty} &\frac{1}{2}\|\dot{\varphi}(t)\|_\mathcal{M}^2 + \frac{1}{2}\langle \nabla^2 U(r_0(t))\varphi(t),\varphi(t)\rangle\\
    &+ U(r_0(t)+\varphi(t)+\Tilde{x}) - U(r_0(t)) - \langle \nabla U(r_0(t)),\varphi(t)\rangle - \frac{1}{2}\langle \nabla^2 U(r_0(t))\varphi(t),\varphi(t)\rangle\ \ud t.
\end{split}
\]

We claim that the map 
\[
\varphi(t) \mapsto \bigg( \frac{1}{2}\int_{1}^{+\infty} \|\dot{\varphi}(t)\|_\mathcal{M}^2 + \langle \nabla^2 U(r_0(t))\varphi(t),\varphi(t)\rangle\ \ud t \bigg)^{1/2}
\]
is an equivalent norm to $\|\cdot\|_\mathcal{D}$. 

Indeed, for a Newtonian potential of degree $-\alpha$, the hessian matrix is
\[
\nabla^2 U(r_0) = -\alpha\frac{U(b_m)}{\|r_0\|_\mathcal{M}^{2+\alpha}}\mathcal{M} + \alpha(2+\alpha)\frac{U(b_m)}{\|r_0\|_\mathcal{M}^{4+\alpha}}\mathcal{M}r_0\otimes\mathcal{M}r_0 - (1+2\alpha)\frac{\nabla_{b_m}U(b_m)\otimes \mathcal{M}r_0(t)}{\|r_0\|_\mathcal{M}^{3+\alpha}} + \frac{\nabla_{b_m}^2 U(b_m)}{\|r_0\|_\mathcal{M}^{2+\alpha}}.
\]
By the homogeneity of the potential, it holds 
\[
\langle \nabla^2 U(r_0(t))\varphi(t),\varphi(t)\rangle \geq -\frac{2\alpha}{(2+\alpha)^2}\frac{\|\varphi(t)\|_\mathcal{M}^2}{t^2},\quad\forall t\in[1,+\infty)
\]
and, by Hardy inequality,
\[
\frac{1}{2}\int_{1}^{+\infty} \|\dot{\varphi}(t)\|_\mathcal{M}^2 + \langle \nabla^2 U(r_0(t))\varphi(t),\varphi(t)\rangle\ \ud t \geq \bigg( 1-\frac{8\alpha}{(2+\alpha)^2} \bigg)\|\varphi\|^2_\mathcal{D} = \frac{(-2+\alpha)^2}{2(2+\alpha)^2}\|\varphi\|_\mathcal{D}^2.
\]
        
Since there exists a constant $C>0$ such that,
\[
\langle \nabla^2 U(r_0(t))\varphi(t),\varphi(t)\rangle \leq C\frac{\|\varphi(t)\|_\mathcal{M}}{t^2}
\]
we use Hardy inequality once again to state that
\[
\frac{1}{2}\int_{1}^{+\infty} \|\dot{\varphi}(t)\|_\mathcal{M}^2 + \langle \nabla^2 U(r_0(t))\varphi(t),\varphi(t)\rangle\ \ud t \leq (1+4C)\|\varphi\|_\mathcal{D}^2,
\]
which proves the claim.

From the equivalence between the two norms, the term $\frac{1}{2}\int_{1}^{+\infty} \|\dot{\varphi}(t)\|_\mathcal{M}^2 + \langle \nabla^2 U(r_0(t))\varphi(t),\varphi(t)\rangle\ \ud t$ is weak lower semicontinuous.

Concerning the remaining term, there exists $\bar T$ large enough such that, for all $t\ge \bar T$, it holds 
\[
    \begin{split}
     & U(r_0(t)+\varphi(t)+\Tilde{x}) - U(r_0(t)) - \langle \nabla U(r_0(t)),\varphi(t)\rangle - \frac{1}{2}\langle \nabla^2 U(r_0(t))\varphi(t),\varphi(t)\rangle\ \ud t \\
    & = \int_{0}^{1}\int_{0}^{1}\int_{0}^{1} \langle \nabla^3 U(r_0(t) + \tau_1\tau_2\tau_3 (\varphi^n(t)+\Tilde{x}))(\varphi^n(t)+\Tilde{x}),\varphi^n(t)+\Tilde{x},\varphi^n(t)+\Tilde{x}\rangle \tau_1 \tau_2^2\ \ud \tau_1\ \ud \tau_2\ \ud \tau_3.
     \end{split}
\]
Then, using the boundedness of the sequence $(\varphi^n)_n$, there exists $C,C'>0$ such that
\[
\begin{split}
&\int_{\bar T}^{+\infty} \int_{0}^{1}\int_{0}^{1}\int_{0}^{1} \langle \nabla^3 U(r_0(t) + \tau_1\tau_2\tau_3 (\varphi^n(t)+\Tilde{x}))(\varphi^n(t)+\Tilde{x}),\varphi^n(t)+\Tilde{x},\varphi^n(t)+\Tilde{x}\rangle \tau_1 \tau_2^2\ \ud \tau_1\ \ud \tau_2\ \ud \tau_3\ \ud t \\
& \le C\int_{\bar T}^{+\infty} \frac{\|\varphi^n(t) - \tilde x\|^3}{t^{\frac{2(3+\alpha)}{2+\alpha}}}\\
& \le C'\int_{\bar T}^{+\infty} \frac{1}{t^\frac{6+\alpha}{2(2+\alpha)}}.
\end{split}
\]
The weak lower semicontinuity of the term above then follows from the dominated convergence Theorem.

On the other hand, on the interval $[1,\bar T]$, the weak-lower semicontinuity is proved from Fatou's Lemma and from the uniform convergence of $(\varphi^n)_n$ on compact sets, which follows from Ascoli-Arzela's Theorem.

\noindent $\bullet$ {\bf Case $\alpha \in (1/2,2)$ and $r_0$ hyperbolic-parabolic. }

To prove the weak-lower semicontinuity of the renormalized Lagrangian action in the hyperbolic-parabolic setting, we again refer to the cluster decomposition given by \eqref{eq:decomposed_action}. By doing so, we can study the two terms $\mathcal{A}_{K}$ and $\mathcal{A}_{K_1,K_2}$ independently, following the arguments we made above in the hyperbolic and parabolic cases, separately.



\subsection{Application of the Renormalized Action Principle}
\label{s:app}

Thanks to Sections \ref{sec:coercivity} and \ref{sec:wlsc}, we can prove the existence part of Theorems~\ref{thm:hyp_alpha_expansive_1},  \ref{thm:hyp_alpha_expansive_2}, \ref{thm:par_alpha} and \ref{thm:HP_alpha}. Indeed, an application of the direct method in the Calculus of Variations yields the existence of at least a minimizer of the (renormalized) Lagrangian action on $\mathcal{D}_0^{1,2}(1,+\infty)$ in all three expansive settings. The Renormalized Action Principle thus ensures that such minimizers are solutions of the equations
\[
\mathcal{M}\ddot{\varphi}(t) = \nabla U (r_0(t) + \varphi(t) + x - r_0(1)) - \mathcal{M}\ddot{r}_0(t), \ t>1,
\]
for fixed homogeneity parameter $\alpha$, initial configuration $x\in\mathcal{X}$ and guiding curve $r_0(t)$ (for hyperbolic motions and $\alpha>1$, this follows from Hamilton's Principle of Least Action combined with Marchal's Principle). To be precise, the case $\alpha\in (0,1/2]$ deserves a comment. In this case, in Section \ref{sec:coercivity}, we proved the coercivity of $\mathcal{A}$ when assuming $\|a\|_{\mathcal M}$ large enough (or, somehow equivalently, when the action is defined on a time interval $[t_0,+\infty)$, with $t_0$ large enough). To prove the existence part in Theorem \ref{thm:hyp_alpha_expansive_2} for every $a\in \Omega$ and $x \in \mathcal X$, we can use standard scaling invariance properties, as follows. Consider $x \in \mathcal{X}$ and $a \in \Omega$. Let $\lambda>0$ be such that $\lambda \|a\|_{\mathcal{M}}$ is large enough so that the direct method can be applied to prove the existence of hyperbolic motions $\xi$, with asymptotic velocity $\lambda a$ and such that $\xi(1)=y=\lambda^{-2/\alpha} x$:
\[
\xi(t)=r_0^{\lambda a}(t)+\varphi(t)+y-r_0^{\lambda a}(1), \ t>1,
\]
where $r_0^{\lambda a}(t)=\lambda a t+ \sum_{k=1}^{\lfloor1/(2\alpha)\rfloor}\Gamma_k(\lambda a) t^{1-k\alpha}$ and $\varphi \in \mathcal{D}^{1,2}_0(1,+\infty)$.

Since $\frac{2}{\alpha}-2\frac{2+\alpha}{\alpha}=-\frac{2}{\alpha}(1+\alpha)$, by time-translation invariance and the space-time scaling invariance of Newton's equations, we have that
\[
\gamma(t):=\lambda^{2/\alpha}\xi\left(1+\frac{t-1}{\lambda^{\frac{2+\alpha}{\alpha}}}\right), \ t \ge 1,
\]
is an expansive motion, with asymptotic velocity $a$, such that $\gamma(1)=x$. Moreover, 
for every $k \in \mathbb N$ and $\lambda >0$,
\[
\Gamma_k(\lambda a)=\lambda^{1-k(2+\alpha)}\Gamma_k(a),
\]
as one can see by induction on $k$, from which one can obtain
\[
\lambda^{2/\alpha}\,\Gamma_k(\lambda a) \left(1+\frac{t-1}{\lambda^{\frac{2+\alpha}{\alpha}}}\right)^{1-k\alpha}=\Gamma_k(a) t^{1-k\alpha}+\psi(t)+\lambda^{2/\alpha}\Gamma_k(\lambda a)-\Gamma_k(a),
\]
with $\psi \in \mathcal{D}^{1,2}_0(1,+\infty)$. Also, it holds
\[
\lambda^{2/\alpha} \lambda a \left(1+\frac{t-1}{\lambda^{\frac{2+\alpha}{\alpha}}}\right)=a(t-1)+\lambda^{\frac{2+\alpha}{\alpha}}a.
\]
Hence, we get
\[
\begin{aligned}
\gamma(t)&=\lambda^{2/\alpha}r_0^{\lambda a}\left(1+\frac{t-1}{\lambda^{\frac{2+\alpha}{\alpha}}}\right) + \lambda^{2/\alpha} \varphi\left(1+\frac{t-1}{\lambda^{\frac{2+\alpha}{\alpha}}}\right)+x-\lambda^{2/\alpha}r_0^{\lambda a}(1)
\\
&=r_0^a(t)+\tilde\varphi(t)+x-r_0^a(1),
\end{aligned}
\]
for some $\tilde \varphi \in \mathcal{D}^{1,2}_0(1,+\infty)$.

\begin{rem} 
\label{rem:dot_gamma_a}
    {
    \rm
    Since the argument is quite standard in this context, we provide the main steps to prove that, if, in all the cases of $\alpha$ and $r_0$ considered, there exists $\varphi \in \mathcal{D}^{1,2}_0(1,+\infty)$ minimizer of $\mathcal A$, and $\gamma(t)=r_0(t)+\varphi(t)+x-r_0(1)$, then 
    \[
    \lim_{t\rightarrow+\infty}\dot \gamma(t) = a.
    \]

    Indeed, by Proposition \ref{th:ren_act_pr} follows, in particular, that $\gamma$ satisfies for every $t>1$, the Newton's equations. In particular, for every $t>s$, and $i=1,\dots,N$, we have
    \[
    \dot \gamma_i(t)-\dot \gamma_i(s) =\int_{s}^{t} \ddot \gamma_i(\tau)\,d\tau = \sum_{j\neq i} \int_{s}^{t} \frac{\gamma_{ij}(\tau)}{|\gamma_{ij}(\tau)|^{2+\alpha}}.
    \]
    We claim that $\frac{\gamma_{ij}(\tau)}{|\gamma_{ij}(\tau)|^{2+\alpha}} \in L^1(T,+\infty)$, for $T$ large enough.

    To prove the claim, we consider the different cases. 

    If $\gamma(t)=at + o(t)$, with $a \in \Omega$, then  $\frac{1}{|\gamma_{ij}(\tau)|^{1+\alpha}} = O\left(\frac{1}{t^{1+\alpha}}\right) $; if $\gamma(t)=at+\beta b_m t^\frac{2}{2+\alpha}+o(t^\frac{2}{2+\alpha})$, with $a \in \Delta$, then $\frac{1}{|\gamma_{ij}(\tau)|^{1+\alpha}} = O\left(\frac1{t^\frac{2+2\alpha}{2+\alpha}}\right)$.

    Hence, $|\dot \gamma(t)-\dot \gamma(s)|\to 0$, as $s \rightarrow+\infty$. This implies that there exists $\gamma_\infty$ such that 
    \[
    \lim_{t \rightarrow+\infty}\dot\gamma(t) = \gamma_\infty.
    \]
    Since $\varphi \in \mathcal{D}_0^{1,2}(1,+\infty)$, there exists $t_k \rightarrow+\infty$ such that $\lim_{k \rightarrow+\infty} \dot \varphi(t_k)=0$.
    
    Hence, from
    \[
    \lim_{k\rightarrow+\infty}\dot\gamma(t_k)=\lim_{k\rightarrow+\infty}\left(\dot r_0(t_k) + \dot\varphi(t_k)\right)=a,
    \]
    we obtain $\gamma_\infty=a$.
    
    }
\end{rem}


\section{Asymptotic estimates on the perturbations from the reference paths}\label{sec:asymptotic_estimates}

The goal of this section is to give the sharpest estimate possible to the growth of the term $\gamma(t)-r_0(t)$ at infinity, where $\gamma(t)$ is the expansive motion under consideration and $r_0(t)$ the corresponding guiding curve.

We study the three different types of expansive motions independently and examine the possible values of $\alpha$.

\subsection{Hyperbolic motions}

Set $\alpha >0$ and $r_0$ as in \eqref{eq:hyperbolic_r_0}.

We want to prove the following.
\begin{prop}
\label{prop:asymp_H}
Assume $\alpha \in (0,+\infty)$, $\alpha\neq 1$, $x \in \mathcal X$. Let $\gamma$ be a hyperbolic solution of the homogeneous $N$-body problem with the form $\gamma(t) = r_0(t) + \varphi(t) + x - r_0(1)$, with $\varphi\in\mathcal{D}_0^{1,2}(1,+\infty)$ being a minimizer of $\mathcal A$.

    Then, it holds, for proper constant vectors $Q,Q'\in\mathcal{X}$, as $t \rightarrow+\infty$:
    \begin{align*}
    \label{eq:asym_hyp}
    \gamma(t)-r_0(t) &= -\frac{\mathcal M^{-1} \nabla U(a)}{\alpha(1-\alpha)}\,t^{1-\alpha} + Q + o(t^{1-\alpha}),\ &\text{ if } \alpha>1;
    \\
     \gamma(t)-r_0(t) &=-\frac{\mathcal M^{-1} \nabla U(a)}{\alpha(1-\alpha)}\,t^{1-\alpha}-\Gamma_2\,t^{1-2\alpha}+Q' + o(t^{1-2\alpha}),\ &\text{ if } \alpha \in (1/2,1);
     \\
     \gamma(t)-r_0(t) &=-\mathcal M^{-1} \nabla U(a)\,\log t+o(\log t),\ &\text{ if } \alpha=1/2;
     \\
     \gamma(t)-r_0(t)&=\Gamma_{P}\,t^{1-P\alpha}+o(t^{1-P\alpha}), \ &\text{ if } \ \alpha \in (0,1/2),
    \end{align*}
    with $P = \lfloor 1/(2\alpha)\rfloor + 1$.
    
\end{prop}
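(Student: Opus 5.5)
The plan is to derive the expansion of $\gamma-r_0$ by integrating Newton's equation twice from infinity. The starting facts are that $\varphi\in\mathcal D_0^{1,2}(1,+\infty)$ solves the Euler--Lagrange equation \eqref{eq:ELphi} for $t>1$ (Proposition~\ref{th:ren_act_pr}), and that $\dot\gamma(t)\to a$ (Remark~\ref{rem:dot_gamma_a}). We write $w(t):=\gamma(t)-r_0(t)=\varphi(t)+x-r_0(1)$, so that
\[
\mathcal M\ddot w(t)=\nabla U(r_0(t)+w(t))-\mathcal M\ddot r_0(t).
\]
Since $\dot r_0(t)\to a$ as well, $\dot w(t)\to0$, and hence $\dot w(t)=-\int_t^{+\infty}\ddot w(\tau)\,\ud\tau$ whenever the right-hand side is integrable. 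The procedure is a bootstrap: a preliminary bound on $w$ is inserted into the right-hand side, the equation is integrated twice, and the bound improves.

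\emph{Step 1: a priori bound.} Proposition~\ref{dis_hardy} gives $\|w(t)\|_{\mathcal M}=O(t^{1/2})$. By homogeneity, $\nabla U(r_0+w)-\nabla U(r_0)=O(t^{-2-\alpha}\|w\|)$, because $r_0(t)/t\to a\in\Omega$. Combined with \eqref{e:diff_r0} this yields
\[
\mathcal M\ddot w=\mathcal R_m t^{-1-(m+1)\alpha}+o(t^{-1-(m+1)\alpha})+O(t^{-2-\alpha}\|w\|),
\]
where $m=\lfloor 1/(2\alpha)\rfloor$. For $\alpha>1/2$ we have $m=0$, $r_0=at$ and $\ddot r_0=0$, so the leading term is simply $\nabla U(at)=t^{-1-\alpha}\nabla U(a)$.

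\emph{Step 2: integrate twice.} Whenever $\ddot w=c\,t^{-1-\beta}+o(t^{-1-\beta})$ with $\beta>0$, integrating from infinity gives $\dot w=-\tfrac{c}{\beta}t^{-\beta}+o(t^{-\beta})$. A second integration gives $w=\tfrac{c}{\beta(1-\beta)}t^{1-\beta}+o(t^{1-\beta})$ when $\beta<1$, a $\log t$ term when $\beta=1$, and $w=Q+O(t^{1-\beta})$ with $Q=\lim w$ when $\beta>1$.
\begin{itemize}
\item For $\alpha>1$, take $\beta=\alpha$ and $c=\mathcal M^{-1}\nabla U(a)$. This produces the coefficient $-\mathcal M^{-1}\nabla U(a)/(\alpha(1-\alpha))\,t^{1-\alpha}$ and a constant $Q$. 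One step of bootstrapping from $w=O(t^{1/2})$ already shows $w$ is bounded, so the error term $O(t^{-2-\alpha}\|w\|)$ is negligible.
\item For $\alpha\in(1/2,1)$, the first pass gives $w=\Gamma_1t^{1-\alpha}+o(t^{1-\alpha})$. Feeding this back, the Taylor term $t^{-2-\alpha}\nabla^2U(a)\Gamma_1t^{1-\alpha}=\nabla^2U(a)\Gamma_1t^{-1-2\alpha}$ appears with $2\alpha>1$. Integrating twice yields the $\Gamma_2t^{1-2\alpha}$ correction (matching \eqref{eq:gamma_k}) plus a constant $Q'$, with an $o(t^{1-2\alpha})$ remainder once the remaining terms are shown to be $O(t^{-1-\beta})$ with $\beta>2\alpha$.
\item For $\alpha\in(0,1/2)$, the source is $\mathcal R_m t^{-1-(m+1)\alpha}$ with $(m+1)\alpha=P\alpha<1$. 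Integrating twice gives $\mathcal R_m t^{1-P\alpha}/(P\alpha(P\alpha-1))=\Gamma_Pt^{1-P\alpha}$, which is consistent with the recursion \eqref{eq:gamma_k}.
\item For $\alpha=1/2$, we have $\beta=1$ and the logarithm appears.
\end{itemize}

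The main obstacle is making the bootstrap close in the small-$\alpha$ cases. There the feedback term $t^{-2-\alpha}\|w\|$ with only $\|w\|=O(t^{1/2})$ is $O(t^{-3/2-\alpha})$, which is already $o(t^{-1-P\alpha})$ exactly because $P\alpha<1/2+\alpha$. This inequality follows from $P=\lfloor 1/(2\alpha)\rfloor+1\le 1/(2\alpha)+1$, with equality excluded only in the borderline case. The borderline case $\alpha=1/2$ (and $1/(2\alpha)\in\mathbb N$ generally) needs care: there the two terms have the same order. I would first improve the a priori bound to $w=o(t^{1/2})$, using that $\dot\varphi\in L^2$ implies $\|\varphi(t)\|=o(\sqrt t)$ by splitting the integral. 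If that is insufficient, I would iterate once more with $w=O(t^{1-P\alpha}\log t)$ to absorb the resonant contribution into the $o(\cdot)$ remainder.
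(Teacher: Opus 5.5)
Your route is the paper's: insert the a priori bound $\|w(t)\|_{\mathcal M}=O(t^{1/2})$ into the Euler--Lagrange equation, Taylor-expand $\nabla U$ around $a$ using the cancellations built into $r_0$, and integrate twice from infinity using $\dot w\to0$. This closes for $\alpha>1$, for $\alpha\in(1/2,1)$, and for $\alpha\in(0,1/2)$. That includes the resonant values $\alpha=1/(2p)$ with $p\ge2$. There your remark that $\dot\varphi\in L^2$ forces $\varphi(t)=o(\sqrt t)$ already makes the feedback term $t^{-2-\alpha}\nabla^2U(a)w$ of size $o(t^{-3/2-\alpha})=o(t^{-1-P\alpha})$, so no further iteration is needed. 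The paper extracts $\varphi=o(t^{1/2})$ from the equation instead, with the same effect. Two minor points:
\begin{itemize}
\item the generic formula in Step 2 should read $w=-\frac{c}{\beta(1-\beta)}\,t^{1-\beta}$; your case-by-case coefficients do carry the right sign;
\item for $\alpha\in(1/2,1)$ your computation gives $+\Gamma_2t^{1-2\alpha}$, in agreement with Theorem~\ref{thm:hyp_alpha_expansive_1}(3), so the minus sign in front of $\Gamma_2$ in the displayed statement is a typo.
\end{itemize}

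The genuine gap is $\alpha=1/2$, where you only say that the logarithm appears and never check its coefficient. Here $m=\lfloor 1/(2\alpha)\rfloor=1$, so $r_0(t)=at+\Gamma_1t^{1/2}$. Then $\mathcal M\ddot r_0=t^{-3/2}\nabla U(a)$ has already cancelled the leading force, and the source left in your Step~1 is $\mathcal R_1t^{-2}=\nabla^2U(a)\Gamma_1\,t^{-2}$. With $w=o(t^{1/2})$ your scheme therefore gives
\[
\ddot w=\mathcal M^{-1}\nabla^2U(a)\Gamma_1\,t^{-2}+o(t^{-2}),\qquad \gamma(t)-r_0(t)=-\mathcal M^{-1}\nabla^2U(a)\Gamma_1\,\log t+o(\log t),
\]
and not $-\mathcal M^{-1}\nabla U(a)\log t$.

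The stated coefficient cannot be correct. The coefficient of $\log t$ is forced by the equation for every solution with $\gamma-r_0=o(t^{1/2})$. That class is preserved by the scaling $\lambda^{2/\alpha}\gamma(\lambda^{-(2+\alpha)/\alpha}t)$, which sends $a$ to $\lambda^{-1}a$. So the coefficient must be homogeneous of degree $-4$ in $a$, while $\mathcal M^{-1}\nabla U(a)$ has degree $-3/2$. The displayed line also contradicts Theorem~\ref{thm:hyp_alpha_expansive_2}(1), whose $\log t$ coefficient $\widetilde\Gamma$ is built from $\nabla^2U(a)\nabla U(a)$. The paper's proof reaches the displayed formula only through the slip ``$\mathcal R_p=\nabla U(a)$'' for $p=1$; that identity holds for $\mathcal R_0$, not for $\mathcal R_1$.

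As written, your proposal claims a case it does not prove. You should carry the computation through and state the corrected coefficient $-\mathcal M^{-1}\nabla^2U(a)\Gamma_1$.
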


\begin{proof}
    The case $\alpha=1$ is treated in \cite{PolimeniTerracini}. We prove the remaining cases.

    \noindent $\bullet$ Assume first $\alpha >1/2$. As a consequence of Remark \ref{rem:dot_gamma_a}, we have that $\dot \varphi(t) \to 0$, as $t \rightarrow+\infty$. Hence,
    \begin{equation}
    \label{eq:BB}
    \lim_{t \rightarrow+\infty}\frac{\dot \varphi(t)}{1/t^{\alpha}}=-\frac1\alpha\lim_{t \rightarrow+\infty}\frac{\ddot \varphi(t)}{1/t^{1+\alpha}}=-\frac{1}{\alpha}\lim_{t \rightarrow+\infty}\frac{\mathcal M^{-1}\nabla U\left(a+\frac{\varphi(t)+\tilde x}{t}\right)}{t^{1+\alpha}}t^{1+\alpha}=-\frac{\mathcal M^{-1}\nabla U(a)}{\alpha},
    \end{equation}
    since $\varphi(t)/t \to 0$, and $a \in \Omega$. 
    
    From \eqref{eq:BB}, we have, for every $t>s$,
    \[
    \varphi(t)-\varphi(s)=-\frac1\alpha\int_s^t\left( \frac{\mathcal M^{-1}\nabla U(a)}{\tau^\alpha}+h(\tau)\right)\,\ud\tau,
    \]
    where $h(\tau)=o\left(\frac1{\tau^\alpha}\right)$, as $\tau \rightarrow+\infty$. Therefore, by integrating the right-hand side of the last identity, we obtain the asymptotics, as follows: if $\alpha >1$, we have
    \[
    \varphi(t)= -\frac{\mathcal M^{-1}\nabla U(a)}{\alpha(1-\alpha)}t^{1-\alpha}+\Gamma,
    \]
    for some constant vector $\Gamma$; if $\alpha \in (1/2,1)$, we get
    \begin{equation}
    \label{e:Q1}
    \varphi(t)= -\frac{\mathcal M^{-1}\nabla U(a)}{\alpha(1-\alpha)}t^{1-\alpha}+o(t^{1-\alpha}).
    \end{equation}
    With \eqref{e:Q1} in hand, in the case $\alpha \in (1/2,1)$, we can improve the asymptotics for $\varphi(t)$, by using that, actually,
    \[
    \ddot \varphi(t)t^{1+\alpha}=\mathcal{M}^{-1}\nabla U(a)+\frac{\mathcal{M}^{-1}\nabla^2 U(a)\Gamma_1}{t^{\alpha}}+o\left(t^{-\alpha}\right).
    \]
    The statement follows after
    integrating twice $\ddot \varphi(t)$.

    \noindent $\bullet$ Assume now $\alpha\in (0,1/2]$, and $\alpha \neq \frac{1}{2p}$, for every $p \in \mathbb{N}$, so that
    \[
    \ddot r_0(t)=\sum_{k=1}^{m}\Gamma_k\,t^{-(1+k\alpha)},
    \]
    where $m=\lfloor 1/(2\alpha)\rfloor$.
    
    Hence, from the equation for $\ddot \varphi(t)$, we get
    \[
    \begin{aligned}
        \mathcal M\ddot \varphi(t)&=\mathcal M \ddot \gamma(t)-\mathcal M\ddot r_0(t)
        \\
        &= \nabla U(r_0(t)+\varphi(t)+\tilde x)-\sum_{k=1}^{m}\mathcal M \Gamma_k\,t^{-(1+k\alpha)}
        \\
        &=\frac{1}{t^{1+\alpha}}\nabla U(a+\eta(t))-\sum_{k=1}^{m}\mathcal M \Gamma_k\,t^{-(1+k\alpha)},
    \end{aligned}
    \]
    where $\eta(t)=\sum_{k=1}^{m}\Gamma_k\,t^{-k\alpha}+\varphi(t)/t+\tilde x/t$. with $\varphi(t)/t = o(t^{-m \alpha})$, as $t \rightarrow+\infty$.

   {
   Since
    \[
    \begin{aligned}
    \nabla U(a+\eta(t))&=\nabla U(a)+\sum_{q=1}^{m}\frac{1}{q!}\nabla^{1+q}U(a)[\eta^1,\dots,\eta^q] + \frac{1}{m!}\nabla^{m+1}U (a)[\eta^1,\dots, \eta^{m+1}]+o(|\eta|^{q+2})
    \\
    &= \nabla U(a)+\sum_{q=1}^{m}\frac{1}{q!}\sum_{j_1+\cdots +j_q\le m}\nabla^{1+q}U(a)[\Gamma_{j_1},\dots,\Gamma_{j_q}]\,t^{-(j_1+\cdots +j_q)\alpha} 
    \\
    &\quad + \sum_{q=1}^{m}\frac{1}{q!}\sum_{p=1}^{q-1}\nabla^{1+q}U(a)[\Gamma_{j_1},\dots,\Gamma_{j_p},\varphi,\dots,\varphi]\,t^{-{\left[(j_1+\cdots +j_p)\alpha+\frac{q-p}{2}\right]}} + \nabla^2 U(a)\frac{\varphi}{t}+o(t^{-m\alpha})
    \\
    &= \nabla U(a)+\sum_{q=1}^{m}\frac{1}{q!}\sum_{j_1+\cdots+ j_q\le m}\nabla^{1+q}U(a)[\Gamma_{j_1},\dots,\Gamma_{j_q}]\,t^{-(j_1+\cdots +j_q)\alpha} + o(t^{-m\alpha}),
    \end{aligned}
    \]
    }
 because of the fact that, from $m\alpha < \frac12$, 
 \[
 \frac{\varphi(t)}{t}=O(t^{-1/2})=o(t^{-m\alpha})
 \]
 and
 \[
 t^{-{\left[(j_1+\cdots j_p)\alpha-\frac{q-p}{2}\right]}} = o(t^{-m\alpha}),
 \]
 since $(j_1+\cdots j_p)\alpha+\frac{q-p}{2} \ge \alpha+\frac12 >m\alpha$.

 Notice that, by the very definition of the sequence $\Gamma_k$, if we put together terms multiplying the same negative power of $t$, we obtain
 \[
 \nabla U(a+\eta(t))= \sum_{k=0}^{m-1}\mathcal R_k\,t^{-k\alpha} + \mathcal R_m\,t^{-m\alpha} + o(t^{-m\alpha}),
 \]
 where $\mathcal R_k$, defined in \eqref{eq:R}, is such that 
 \[
 \Gamma_{k}=-\frac{\mathcal M^{-1}\mathcal R_{k-1}}{k\alpha(1-k\alpha)} \ \mbox{ for every } k=1,\dots,m.
 \]
 Hence,
 \[
 \mathcal M\ddot \varphi(t)= \frac{1}{t^{1+\alpha}}\left[ \mathcal R_m\,t^{-m\alpha} + o(t^{-m\alpha})\right],
 \]
 as $t \rightarrow+\infty$. Hence, we proved that
 \begin{equation}
     \label{eq:hyper_varphi}
      \lim_{t\rightarrow+\infty}\frac{\ddot \varphi(t)}{1/t^z} = \mathcal M^{-1}\mathcal R_m,
 \end{equation}
 where $z=1+(m+1)\alpha \in (3/2,2)$, since $\alpha \in (0,1/2)$.
 Therefore, we get
 \[
\lim_{t\rightarrow+\infty} \frac{\dot \varphi(t)}{1/t^{z-1}}=-\frac1{z-1}\lim_{t\rightarrow+\infty}\frac{\ddot \varphi(t)}{1/t^{z}}=-\frac{\mathcal M^{-1}\mathcal R_m}{z-1}.
 \]
    Arguing as in the cases $\alpha >1/2$, we obtain
    \[
    \varphi(t) -\varphi(s)=-\frac1{z-1}\int_s^t \left(\frac{\mathcal M^{-1}\mathcal R_m}{\tau^{z-1}}+h(\tau)\right)\,\ud\tau
    \]
    with $h(\tau)=o\left(\frac1{\tau^{z-1}}\right)$, as $\tau \rightarrow+\infty$. This, in turn, implies
     \[
    \varphi(t)=-\frac{\mathcal M^{-1}\mathcal R_m}{(z-1)(2-z)}\,t^{1-(m+1)\alpha}+o(t^{1-(m+1)\alpha}),
    \]
    that is
    \[
    \varphi(t)=\Gamma_{m+1}\,t^{1-(m+1)\alpha}+o(t^{1-(m+1)\alpha}),
    \]
    with $1-(m+1)\alpha \in (0,1/2)$, and $1-(m+1)\alpha \to 1/2$, as $\alpha\to 0^+$.

    \noindent $\bullet$  Assume now $\alpha =1/(2p)$, for some $p \in \mathbb{N}$. Under this assumption, $m=p$.

In this case, the same computations performed in the case $1/\alpha \notin 2 \mathbb{N}$ lead to
\begin{equation}
\label{eq:CC}
\mathcal M\ddot \varphi(t)=\frac{1}{t^{1+\alpha}}\left[\mathcal R_p\,t^{-1/2}+ \nabla^2 U(a)\varphi(t)/t + o(t^{-1/2})\right].
\end{equation}
Since we do not know yet if $\varphi(t)=o(t^{1/2})$, we cannot directly apply the same computations as before. However, from the last identity for $\ddot \varphi(t)$, we can deduce that, for every $\varepsilon>0$ small,
\[
\lim_{t \rightarrow+\infty} \frac{\dot \varphi(t)}{1/t^{\frac12+\alpha-\varepsilon}}=\lim_{t \rightarrow+\infty}\frac{\ddot \varphi(t)}{1/t^{\frac32+\alpha-\varepsilon}}=0.
\]
This implies that 
\[
\frac{\|\varphi(t)-\varphi(s)\|_\mathcal{M}}{t^{1/2}}\le \frac{\int_s^t \|\dot \varphi(\tau)\|_\mathcal{M}\,\ud\tau}{t^{1/2}} \le C \frac{\int_s^t \tau^{-\frac12-\alpha+\varepsilon}\,\ud\tau}{t^{1/2}}    \le C t^{-\alpha+\varepsilon} \to 0 \ \mbox{ as } \ t \rightarrow+\infty,
\]
that in turn informs us that $\varphi(t)=o(t^{1/2})$. Thus, from \eqref{eq:CC}, we can proceed as in the cases $1/\alpha \notin 2\mathbb N$ to deduce that
\[
\lim_{t \rightarrow+\infty}\frac{\dot \varphi(t)}{1/t^{\frac12+\alpha}}=-\frac1{\frac12 +\alpha}\lim_{t \rightarrow+\infty}\frac{\ddot \varphi(t)}{1/t^{\frac32+\alpha}}=-\frac1{\frac12+\alpha}\mathcal M^{-1}\mathcal R_p,
\]
from which we get
\[
\varphi(t)-\varphi(s)=-\frac1{\frac12+\alpha}\int_s^t \left(\frac{\mathcal M^{-1}\mathcal R_p}{\tau^{\frac{1+2\alpha}{2}}}+h(\tau)\right)\,\ud\tau,
\]
with $h(\tau)=o(\tau^{-\frac{1+2\alpha}{2}})$. Finally, if $\alpha=1/2$, since $\mathcal R_p=\nabla U(a)$ ,we conclude 
\[
\varphi(t)=-\mathcal M^{-1} \mathcal \nabla U(a) \, \log(t)+o(\log(t)),
\]
as $t\rightarrow+\infty$; if $\alpha \in (0,1/2)$, we deduce that
\[
\varphi(t)=-\frac{\mathcal M^{-1}\mathcal R_p}{\frac14 - \alpha^2}\,t^{\frac{1-2\alpha}{2}}+o\left(t^{\frac{1-2\alpha}{2}}\right)=\Gamma_{m+1}\,t^{\frac{1-2\alpha}{2}}+o\left(t^{\frac{1-2\alpha}{2}}\right),
\]
as $t \rightarrow+\infty$. Note, since $m\alpha=1/2$, $\frac{1-2\alpha}{\alpha}=1-(m+1)\alpha$.

 All in all, we proved Proposition \ref{prop:asymp_H}.
\end{proof}

\subsection{Parabolic motions}
The goal of this section is to prove the following result.

\begin{prop}
\label{prop:asymptotic_par}
    For $\alpha \in (0,2)$, $x \in \mathcal X$, and a normalized minimal central configuration $b_m$, let $\gamma$ be a corresponding parabolic solution having the form $\gamma(t) = \beta b_mt^{\frac{2}{2+\alpha}}+\varphi(t)+x-\beta b_m$, with $\varphi\in\mathcal{D}_0^{1,2}(1,+\infty)$ minimizer of $\mathcal A$, $\beta = \big({\frac{(2+\alpha)^2}{2}U(b_m)}\big)^{\frac{1}{2+\alpha}}$, and $\gamma(1)=x$. 
    
    Then, it holds
    \begin{equation}
    \label{eq:asymp}
    \gamma(t)-\beta b_m\,t^\frac{2}{2+\alpha}\,  =O(t^\frac{\alpha}{2+\alpha}), \ \mbox{ for } \ t \rightarrow+\infty.
    \end{equation}
\end{prop}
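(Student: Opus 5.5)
The plan is to derive the Euler–Lagrange equation \eqref{eq:ELphi} for $\varphi$ in the parabolic setting and to read it as a perturbed Euler (equidimensional) equation. Write $\psi(t)=\varphi(t)+\tilde x$ with $\tilde x=x-\beta b_m$, so that $\gamma(t)-\beta b_m t^{\frac{2}{2+\alpha}}=\psi(t)$. Since $r_0=\beta b_m t^{\frac{2}{2+\alpha}}$ solves \eqref{eq:NBP}, we have $\mathcal M\ddot r_0=\nabla U(r_0)$. Moreover, by $-(2+\alpha)$-homogeneity of $\nabla^2U$,
\[
\nabla^2U(r_0(t))=t^{-2}A,\qquad A:=\nabla^2U(\beta b_m).
\]
Taylor expansion then gives
\[
t^2\mathcal M\ddot\psi(t)=A\psi(t)+t^2 N(t,\psi(t)),\qquad \|N(t,\psi)\|\lesssim \frac{\|\psi\|^2}{t^{\frac{2(3+\alpha)}{2+\alpha}}}.
\]
The quadratic bound on $N$ holds for $t$ large, because $z(t)\to b_m\in\Omega$ exactly as in the proof of Proposition~\ref{prop:well-posedness}. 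As an a priori bound, \eqref{dis_space_D012} gives $\|\psi(t)\|=O(t^{1/2})$. With this, $t^2N=O(t^{1-\frac{2(3+\alpha)}{2+\alpha}+2})\,\|\psi\|\cdot t^{-1/2}\cdot t^{1/2}$, which is $t^{-\frac{2-\alpha}{2(2+\alpha)}}\|\psi\|$ up to constants. So the nonlinearity is a linear term with coefficient decaying like a negative power of $t$, and it can be absorbed as a perturbation.

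Next I pass to logarithmic time $s=\log t$ and diagonalize $\mathcal M^{-1}A$, which is $\mathcal M$-self-adjoint. Each eigenvalue $\mu$ yields the indicial equation $\lambda(\lambda-1)=\mu$, whose roots $\lambda_\pm$ satisfy $\lambda_++\lambda_-=1$. The key spectral input is the lower bound
\[
\mu\ge -\frac{2\alpha}{(2+\alpha)^2},
\]
which comes from the minimality of $b_m$. It is exactly the Hessian inequality already used in the weak lower semicontinuity argument. Because $-\frac{2\alpha}{(2+\alpha)^2}=\frac{2}{2+\alpha}\cdot\bigl(-\frac{\alpha}{2+\alpha}\bigr)$, which is the value of $\lambda(\lambda-1)$ at $\lambda=\frac2{2+\alpha}$, all roots are real. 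They satisfy $\lambda_+\ge\frac{2}{2+\alpha}>\frac12$ and $\lambda_-\le\frac{\alpha}{2+\alpha}$, and equality occurs in the radial (homothetic) direction. In the variable $s$, the homogeneous solutions behave like $e^{\lambda_\pm s}$. The condition $\varphi\in\mathcal D_0^{1,2}(1,+\infty)$, i.e. $\int\|\dot\varphi\|^2\,\ud t<\infty$, excludes any component growing like $t^{\lambda_+}$ with $\lambda_+\ge\frac12$. Hence only the $t^{\lambda_-}$ modes survive.

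I would then apply variation of constants, or equivalently a Levinson/Grönwall-type argument for the first-order system in $s$ for $(\psi,\,t\dot\psi)$. The perturbation has coefficient $O(e^{-\kappa s})$ with $\kappa=\frac{2-\alpha}{2(2+\alpha)}>0$, which is integrable in $s$. The constant $\tilde x$ enters only through $\psi$ itself, so no additional forcing appears. The scheme is to fix the unstable components by imposing decay at $+\infty$, to write the stable components via the Duhamel formula from a large $s_0$, and to bootstrap. First improve $\|\psi\|=O(t^{1/2})$ to $O(t^{\frac12-\kappa'})$, and iterate until reaching $O(t^{\max\lambda_-})\subseteq O(t^{\frac{\alpha}{2+\alpha}})$. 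An eigenvalue $\mu=0$ (for instance from rotations) gives $\lambda_-=0$, and a possible $\log t$ factor there is harmless because $\frac{\alpha}{2+\alpha}>0$.

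I expect the main obstacle to be this last step: the perturbation argument must rule out that the nonlinear term or the $\lambda_+$ components push the growth above $t^{\frac{\alpha}{2+\alpha}}$. I would first try the exponential dichotomy in $s$ with the integrable coefficient bound above, using finiteness of $\|\varphi\|_{\mathcal D}$ to pin down the unstable part. If the resonant case $\lambda_-=\frac{\alpha}{2+\alpha}$ threatens a logarithmic loss, I would separate that direction explicitly. In that direction the equation reduces to the radial one, and there the exact solution family (time shifts of $r_0$, whose derivative is $\propto t^{-\frac{\alpha}{2+\alpha}}$, together with the dilation family) shows that the growth is at most $t^{\frac{\alpha}{2+\alpha}}$ without logarithms.
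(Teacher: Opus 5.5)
Your main line is correct, and it is a genuinely different route from the paper's.

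\textbf{The paper's route.} The paper never linearizes the vector equation. It works with the scalar $\eta=\|\psi\|_{\mathcal M}$ and uses $\ddot\eta\ge\langle\ddot\psi,\psi\rangle_{\mathcal M}/\eta$. Combined with a lower bound on $\langle\nabla^2U(r_0+s\psi)\psi,\psi\rangle$, this gives a differential inequality $\ddot\eta+\mu\,\eta/t^2\ge f$. The paper then concludes by a maximum principle comparing $\eta$ with $t^{\theta_-}$, where $\theta_-=\frac{\alpha}{2+\alpha}$. This is done twice:
\begin{itemize}
\item first with a slightly perturbed constant $\mu_\varepsilon$, losing $t^{\varepsilon}$;
\item then with the sharp $\mu$, after estimating the Hessian along $r_0+s\psi$ near $b_m$ so that $f=O(t^{-q})$ with $q\ge\frac{4+\alpha}{2+\alpha}$.
\end{itemize}

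\textbf{Your route.} You linearize at the exact Hessian $\nabla^2U(r_0(t))=t^{-2}\nabla^2U(\beta b_m)$. This gives a constant-coefficient Euler system in $s=\log t$, perturbed by a coefficient of size $O(e^{-\kappa s})$. You then read off the rates from the spectrum of $\mathcal M^{-1}\nabla^2U(\beta b_m)$, whose lower bound $-\frac{2\alpha}{(2+\alpha)^2}$ is exactly what the paper's comparison uses.

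\textbf{What each buys.} The paper's route needs only a one-sided bound and a scalar comparison principle, with no linear asymptotic theory, but it yields only a bound on the norm. Yours needs Levinson's theorem (or a Duhamel bootstrap), but gives mode-by-mode information:
\begin{itemize}
\item $\psi(t)=\sum_k c_k t^{\lambda_-^{(k)}}(v_k+o(1))$;
\item the leading $t^{\frac{\alpha}{2+\alpha}}$ part lies in the eigenspace where the spectral bound is attained;
\item the same Duhamel formula on the radial component reproduces the paper's subsequent estimate on $\langle\varphi,b_m\rangle_{\mathcal M}$.
\end{itemize}
Moreover, Levinson's theorem with an integrable perturbation gives exact exponential rates. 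So no logarithmic loss can occur, and resonant directions need no separate treatment.

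\textbf{Errors in your side remarks.} Some of them are wrong, and your fallback plan should be dropped.
\begin{itemize}
\item Equality in $\mu\ge-\frac{2\alpha}{(2+\alpha)^2}$ is \emph{not} attained in the radial direction. Since $\nabla\tilde U(b_m)=0$ and $\nabla^2\tilde U(b_m)b_m=0$, one gets $\mathcal M^{-1}\nabla^2U(\beta b_m)b_m=\frac{2\alpha(1+\alpha)}{(2+\alpha)^2}b_m$. The indicial roots there are $\frac{2+2\alpha}{2+\alpha}$ and $-\frac{\alpha}{2+\alpha}$. The time-shift mode $\dot r_0\propto t^{-\frac{\alpha}{2+\alpha}}$ you cite is precisely the decaying radial root.
\item The bound is attained along infinitesimal rotations $v=Sb_m$, with $S$ skew-symmetric acting on each body, and along any other tangential null direction of $\nabla^2\tilde U(b_m)$. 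Differentiating $\nabla U(Rx)=R\nabla U(x)$ and using the central configuration equation gives $\mathcal M^{-1}\nabla^2U(\beta b_m)v=-\frac{2\alpha}{(2+\alpha)^2}v$.
\item So rotations give $\mu=-\frac{2\alpha}{(2+\alpha)^2}$, not $\mu=0$. The corresponding modes are $t^{\frac{2}{2+\alpha}}$ (a fixed rotation of $r_0$) and $t^{\frac{\alpha}{2+\alpha}}$ (small angular momentum).
\item Indicial roots never produce logarithms, since $1+4\mu>0$ always.
\end{itemize}

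\textbf{Where a logarithm can actually appear.} If you run the naive Duhamel iteration instead of Levinson, the only logarithm comes from the first forcing bound $t^2\|N\|\lesssim t^{3-\frac{2(3+\alpha)}{2+\alpha}}=t^{\frac{\alpha}{2+\alpha}}$. This is exactly resonant with the rotational modes, not the radial one, and yields $O(t^{\frac{\alpha}{2+\alpha}}\log t)$. One more iteration gives forcing $O(t^{\frac{2\alpha-2}{2+\alpha}}\log^2 t)$. This is non-resonant because $\alpha<2$, and the clean bound $O(t^{\frac{\alpha}{2+\alpha}})$ follows.
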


Before providing the proof of Proposition \ref{prop:asymptotic_par}, we collect some preliminary tools.

We begin our arguments by considering the differential equation
\begin{equation}
    \label{eq:eq_with_sing}
    \ddot{y}(t) + \frac{\mu}{t^2}y(t) = 0,
\end{equation}
with a constant $\mu<1/4$. This equation has monomial solutions $t^\theta$ if
\[
\theta(\theta-1)+\mu=0,
\]
which is obtained for 
\[
\theta_\pm = \frac{1\pm\sqrt{1-4\mu}}{2}.
\]
Since $\mu<1/4$, there are then two distinct real solutions, with $\theta_- < 1/2 < \theta_+$. As a consequence, in the space $\mathcal{D}_0^{1,2}(1,+\infty)$, the solutions of \eqref{eq:eq_with_sing} have the form $c\,t^{\theta_-}$, for some $c\in \mathbb R$.

\smallskip

Consider now the differential equation
\begin{equation}
\label{eq:mu_f}
\ddot{y}(t) + \frac{\mu}{t^2}y(t) = f(t), \ t> T,
\end{equation}
with $\mu<1/4$ and a given function $f$. By the method of constants for differential equations, we obtain the general solution
\begin{equation}
    \label{eq:genereal_sol}
    y(t) = c_1 t^{\theta_+} + c_2 t^{\theta_-} + y_p(t),
\end{equation}
where
\begin{equation}
    \label{eq:particular_sol}
    y_p(t)= \frac{1}{\theta_+-\theta_-}\left(t^{\theta_+}\int_{T}^{t} \sigma^{\theta_-} f(\sigma)\ \ud \sigma - t^{\theta_-}\int_{T}^{t} \sigma^{\theta_+}f(\sigma)\ \ud \sigma\right)
\end{equation}

\begin{rem}
\label{rem:f_gamma}
    Suppose that, for some $q>0$, $f=O(t^{-q})$, for $t\rightarrow+\infty$. Then,
    \[
    \dot y_p(t)=\frac{1}{\theta_+-\theta_-}\left(\theta_+\,t^{\theta_+-1}\int^t\,\sigma^{\theta_-}\,f(\sigma)\,\ud\sigma - \theta_- \, t^{\theta_--1}\int^t \sigma^{\theta_+}\,f(\sigma)\,\ud\sigma\right) =O(t\,|f(t)|)=O(t^{1-q}).
    \]
    Hence, $y_p \in \mathcal{D}_0^{1,2}(T,+\infty)$ if $q>3/2$. 
    
    Moreover, we have  
    \[
    \|y_p(t)\|= O(t^{{\theta_+}+{\theta_-}+1-q})=O(t^{2-q}),
    \]
    for $t\rightarrow+\infty$.

    Since $t^{\theta_+} \notin \mathcal{D}^{1,2}(T,+\infty)$, if $q > 2-\theta_-$, we deduce that every solution $y$ of \eqref{eq:mu_f} lying in $\mathcal{D}^{1,2}(T,+\infty)$ must satisfy $y(t)=O(t^{\theta_-})$, as $t\to+\ \infty$. In the case $\theta_-=\alpha/(2+\alpha)$, then $q > 2-\theta_-$ reads as $q > (4+\alpha)/(2+\alpha)$.
\end{rem}

We recall the following proposition from the classical theory of partial differential equations, which will be needed in the next arguments.  
\begin{prop}[Maximum principle]\label{prop:max_principle}
Let $y\in\mathcal{D}^{1,2}(T,+\infty)$ be such that $
\ddot{y}(t) + \frac{\mu}{t^2}y(t) \ge f(t)$, for  $t>T$, with $\mu <1/4$ and $f \in L^2(T,+\infty;t^2\,\ud t)$.

Then,
\[
y(t) \le y(T)\bigg(\frac{t}{T}\bigg)^{\theta_-} + y_p(t), \ \mbox{ for } \ t\ge T.
\]
\end{prop}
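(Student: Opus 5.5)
The plan is a comparison argument: I compare $y$ with the function that solves the corresponding equation exactly and has the same value at $T$. Let $w$ be the solution of $\ddot w+\frac{\mu}{t^2}w=f$ on $(T,+\infty)$ that lies in $\mathcal{D}^{1,2}(T,+\infty)$ and satisfies $w(T)=y(T)$. By the representation \eqref{eq:genereal_sol}--\eqref{eq:particular_sol}, the natural candidate is
\[
w(t)=y(T)\Big(\frac{t}{T}\Big)^{\theta_-}+y_p(t).
\]
Here $y_p(T)=0$, and the term $c_1t^{\theta_+}$ is excluded because $t^{\theta_+}\notin\mathcal{D}^{1,2}$ (since $\theta_+>1/2$). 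The assumption $f\in L^2(T,+\infty;t^2\,\ud t)$ should be exactly what guarantees $\dot y_p\in L^2$. As in Remark~\ref{rem:f_gamma}, I would bound $t^{\theta_\pm-1}\int_T^t\sigma^{\theta_\mp}f$ via Hardy-type (Schur test) estimates for the kernel $t^{\theta_+-1}\sigma^{\theta_-}$ on $\sigma<t$. For the $t^{\theta_-}\int_T^t\sigma^{\theta_+}f$ part, this may require replacing $\int_T^t$ by $-\int_t^\infty$ and absorbing the difference into the $t^{\theta_-}$ coefficient. I will allow this adjustment, since it only modifies $w$ by a multiple of $t^{\theta_-}$, and the boundary value can be renormalized accordingly.

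Set $z=y-w\in\mathcal{D}^{1,2}(T,+\infty)$. Then $z(T)=0$ and $Lz:=\ddot z+\frac{\mu}{t^2}z\ge 0$ in the distributional sense. The claim becomes $z\le0$. I would test the inequality against $z^+=\max(z,0)$, which belongs to $\mathcal{D}^{1,2}_0(T,+\infty)$ because $z(T)=0$. Integrating by parts (justified by density of compactly supported functions and the decay of boundary terms for $\mathcal{D}^{1,2}$ functions, $|z(t)|\lesssim\sqrt t$, along a sequence $t_k$ with $t_k\dot z(t_k)z(t_k)\to0$) gives
\[
0\le\int_T^\infty (Lz)\,z^+\,\ud t=-\int_T^\infty |\dot z^+|^2\,\ud t+\mu\int_T^\infty\frac{(z^+)^2}{t^2}\,\ud t.
\]

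Now I use the sharp Hardy inequality (Proposition~\ref{dis_hardy}, with constant $4$, rescaled to $[T,\infty)$): $\int (z^+)^2/t^2\le 4\int|\dot z^+|^2$. If $\mu\le0$, the right-hand side is at most $-\|z^+\|^2_{\mathcal D}$. If $0<\mu<1/4$, it is at most $-(1-4\mu)\|z^+\|_{\mathcal D}^2$. In either case $\|z^+\|_{\mathcal D}=0$, so $z^+\equiv0$, since $z^+(T)=0$. Therefore $y\le w$, which is the claim.

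I expect the main obstacle to be the integrability and boundary-term justification: showing that $y_p$ (possibly in the corrected form) actually lies in $\mathcal{D}^{1,2}$ under only $f\in L^2(t^2\,\ud t)$, and that the integration by parts at infinity loses nothing. To handle this I would first truncate, working on $[T,R]$ with a cutoff $\chi_R$ multiplying $z^+$. The error terms are $\int\dot z\,z^+\dot\chi_R$, which are controlled by $\|\dot z\|_{L^2(R,2R)}\cdot\big(\int_R^{2R}(z^+)^2/t^2\big)^{1/2}\to0$. Then I let $R\to\infty$.
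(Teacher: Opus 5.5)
Your argument is the same comparison argument as the paper's proof: there $v=w-y$ is tested against $v^-$, which is your $z^+$. However, the step $z=y-w\in\mathcal{D}^{1,2}(T,+\infty)$ fails, and your proposed repair targets the wrong term.

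Write $f=g/\sigma$ with $g\in L^2$. The term $t^{\theta_-}\int_T^t\sigma^{\theta_+}f$ is harmless: its derivative involves the kernel $t^{\theta_--1}\sigma^{\theta_+-1}$ on $\sigma<t$, which is $L^2$-bounded precisely because $\theta_+>1/2$. Moreover, $\int_t^\infty\sigma^{\theta_+}f$ need not converge, so your replacement is not even defined in general. The Hardy/Schur test fails instead for the kernel $t^{\theta_+-1}\sigma^{\theta_--1}$ on $\sigma<t$, because $\theta_-<1/2$. Concretely, $\int_T^\infty\sigma^{\theta_-}|f|\,\ud\sigma\le\|\sigma f\|_{L^2}\bigl(\int_T^\infty\sigma^{2\theta_--2}\,\ud\sigma\bigr)^{1/2}<\infty$, so
\[
y_p(t)=C\,t^{\theta_+}+\tilde y_p(t),\qquad C:=\frac{1}{\theta_+-\theta_-}\int_T^\infty\sigma^{\theta_-}f(\sigma)\,\ud\sigma,
\]
where $\tilde y_p(t):=\frac{1}{\theta_+-\theta_-}\bigl(-t^{\theta_+}\int_t^\infty\sigma^{\theta_-}f\,\ud\sigma-t^{\theta_-}\int_T^t\sigma^{\theta_+}f\,\ud\sigma\bigr)$. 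This $\tilde y_p$ does lie in $\mathcal{D}^{1,2}(T,+\infty)$: the $tf(t)$ terms in its derivative cancel, and both remaining pieces are Hardy-bounded. But $t^{\theta_+}\notin\mathcal{D}^{1,2}$. So the needed correction is a multiple of $t^{\theta_+}$, not of $t^{\theta_-}$, and it cannot be absorbed by renormalizing the value at $T$.

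Your positive-part/Hardy argument, including the cutoff justification, is fine once applied to the genuine $\mathcal{D}^{1,2}$ comparison function. That is the solution $\tilde w\in\mathcal{D}^{1,2}$ of $\ddot{\tilde w}+\mu\tilde w/t^2=f$ with $\tilde w(T)=y(T)$, and the argument gives
\[
y(t)\le\tilde w(t)=y(T)\Big(\frac{t}{T}\Big)^{\theta_-}+y_p(t)-C\bigl(t^{\theta_+}-T^{\theta_+-\theta_-}t^{\theta_-}\bigr),\qquad t\ge T .
\]
This implies the stated inequality only when $C\ge0$. When $C<0$ the statement as written is false. Take $\mu=0$, so $\theta_-=0$, $\theta_+=1$ and $y_p(t)=\int_T^t(t-\sigma)f(\sigma)\,\ud\sigma$. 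Let $f=-\chi_{[T,T+1]}$ and let $y$ satisfy $\dot y(t)=(T+1-t)^+$. Then $y\in\mathcal{D}^{1,2}$ and $\ddot y=f$, yet $y(t)-y(T)-y_p(t)=t-T>0$ for every $t>T$.

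The paper's proof simply asserts $v\in\mathcal{D}^{1,2}_0(T,+\infty)$ and has the same gap. What your method does establish is the corrected bound above. Equivalently, it is the statement with $y_p$ replaced by $\tilde y_p$ and the coefficient of $t^{\theta_-}$ adjusted to match $y(T)$. This corrected version is also what the later application needs: for $f=O(t^{-q})$ with $q>2-\theta_-$ one has $\tilde y_p=O(t^{\theta_-})$, whereas $y_p$ itself generically grows like $t^{\theta_+}$.
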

\begin{proof}
    Define
    \[
    v(t) := \frac{y(T)}{T^{\theta_-}}\,t^{\theta_-}+y_p(t) - y(t).
    \]
    The function $v \in \mathcal{D}^{1,2}_0(T,+\infty)$  satisfies
    \[
    -\ddot{v}- \frac{\mu}{t^2}v \ge 0, \ t>T.
    \]
   Multiplying with the negative part of $v$, $v^-$, we get
    \[
    0\ge 
    \int_{T}^{+\infty} \left(\|\dot v^-\|^2 - \frac{\mu}{t^2}\|v^-\|^2\right) \ud t \ge \left(\frac14 -\mu\right)\int_T^{+\infty} \frac{\|v^-\|^2}{t^2}\,\ud t.
    \]
    This implies $v^-\equiv 0$. Then, $v\ge0$.
\end{proof}

\begin{rem}
\label{rem:corollary}
    A consequence of Proposition \ref{prop:max_principle} and Remark \ref{rem:f_gamma} is that, if $f=O(t^{-q})$, for some $q \ge 2-\theta_-$, and $y\in \mathcal{D}^{1,2}(T,+\infty)$ satisfies $\ddot y(t)+\frac{\mu}{t^2}y(t)\ge f(t)$, for $t >T$, then $y=O(t^{\theta_-})$, for $t\rightarrow+\infty$.
\end{rem}

\smallskip
We are now in the position to prove Proposition \ref{prop:asymptotic_par}. In the proof, we adopt the following notation.
Recall that, by homogeneity of the Newtonian potential, if $U(x)=\tilde U(x)\|x\|_{\mathcal M}^{-\alpha}$, with $\tilde U(x)=\tilde U(x/\|x\|_{\mathcal M})$, we have
\[
U(x) = \frac{\tilde U(x)}{\|x\|_{\mathcal M}^\alpha}=\frac{\tilde U(\hat x)}{\|x\|_{\mathcal M}^\alpha},
\]
and
\begin{equation}
\label{eq:hessian}{
\nabla^2 U(x) = - \alpha\frac{\tilde U(\hat x)}{\|x\|_{\mathcal M}^{2+\alpha}}\mathcal M+\alpha(\alpha+2)\frac{\tilde U(\hat x)}{\|x\|_{\mathcal M}^{2+\alpha}}\mathcal M\hat x\otimes \mathcal M\hat x - (1+2\alpha)\frac{\nabla\tilde U(\hat x)\otimes \mathcal M\hat x}{\|x\|_{\mathcal M}^{2+\alpha}}  +\frac{\nabla^2\tilde  U(\hat x)}{\|x\|_{\mathcal M}^{2+\alpha}},}
\end{equation}
where we denote $\hat x= x/\|x\|_{\mathcal M}$.

\begin{proof}[Proof of Proposition \ref{prop:asymptotic_par}]
    Denoting $r_0(t) = \beta b_m t^{\frac{2}{2+\alpha}}$ and 
    \[
    \psi(t):=\varphi(t)+x-r_0(1),
    \]
    we want to prove that $\|\psi(t)\|_\mathcal{M}=O(t^\frac{\alpha}{2+\alpha})$, as $t \rightarrow+\infty$. We structure the proof into several steps.

    \smallskip
    \emph{Step 1. Intermediate estimates.} \ 
    Define $\eta(t):=\|\psi(t)\|_\mathcal{M}=\left(\sum_i m_i \|\psi_i\|^2\right)^{1/2}$. We have:
\[
\begin{split}
\ddot \eta &= \frac{\langle\ddot \psi, \psi\rangle_\mathcal{M}}{\eta} + \frac{\|\dot \psi\|_\mathcal{M}^2}{\eta} - \frac{\left(\langle \dot \psi, \psi\rangle_\mathcal{M}\right)^2}{\eta^3}
\\
&\ge \frac{\langle \ddot \psi, \psi\rangle_\mathcal{M}}{\eta}
\\
&=\frac{\langle \nabla U(r_0 + \psi) - \nabla U(r_0),\,\psi\rangle}{\eta}\\
& = \frac{1}{\eta} \int_{0}^{1}\langle\nabla^2 U(r_0+s\psi)\psi,\,\psi\rangle \, \ud s.
\end{split}
\]
For the sake of readability, we define
\[
x_s(t):=r_0(t)+s\,\psi(t) \ \mbox{ and } \ \hat x_s(t)=\frac{x_s(t)}{\|x_s(t)\|_{\mathcal M}}=\frac{r_0(t)+s\,\psi(t)}{\|r_0(t)+s\,\psi(t)\|_{\mathcal M}}.
\]
From \eqref{eq:hessian}, we have
\[
\langle\nabla^2 U(x_s)\psi, \psi\rangle \ge  \frac{\langle\nabla^2\tilde  U(\hat x_s)\psi,\psi\rangle}{\|x_s\|_{\mathcal M}^{2+\alpha}} - \frac{\langle  \nabla\tilde U(\hat x_s), \psi\rangle \langle \hat x_s, \psi\rangle_{\mathcal M}}{\|x_s\|_{\mathcal M}^{2+\alpha}} - \alpha\frac{U(\hat x_s)}{\|x_s\|_{\mathcal M}^{2+\alpha}}\|\psi\|_{\mathcal M}^2.
\]
Since $\hat x_s \to b_m$, and $b_m$ is a local minimum for $\tilde U$, then, for every $\varepsilon>0$, there is $T$ such that for all $t\ge T$
\[
\langle\nabla^2 U(x_s(t))\psi, \psi\rangle \ge -(1-\varepsilon) \frac{\alpha\,U_{min}}{\|r_0\|_{\mathcal M}^{2+\alpha}} \|\psi\|_\mathcal{M}^2 = -(1-\varepsilon)\frac{2\alpha}{(2+\alpha)^2}\frac{\eta^2}{ {t^2}}.
\]

Putting this into the differential inequality for $\eta$, we get
\[
\ddot\eta \ge  -(1-\varepsilon)\frac{2\alpha}{(2+\alpha)^2}\frac{\eta}{t^2}, \ \mbox{ for } \ t \ge T,
\]
that is $\ddot \eta + \mu_\varepsilon \frac{\eta}{t^2} \ge 0$, for $t \ge T$, with
\[
\mu_\varepsilon = (1-\varepsilon)\frac{2\alpha}{(2+\alpha)^2}< \frac{2\alpha}{(2+\alpha)^2}<1/4.
\]
By means of Proposition \ref{prop:max_principle}, with $f=0$, we obtain, for every $\varepsilon>0$,
\begin{equation}
    \label{eq:gamma_eps}
\eta(t) \le C_T\,t^{\sigma(\varepsilon)}, \ t \ge T,
\end{equation}
where $C_T=\frac{\eta(T)}{T^{\sigma(\varepsilon)}}$ and
\[
\sigma(\varepsilon) = \frac{1-\sqrt{1-4\mu_\varepsilon}}{2} = \frac{\alpha+\bar\varepsilon}{2+\alpha},
\]
for $\bar\varepsilon >0$ that tends to $0$ if $\varepsilon \to 0^+$.

\smallskip
\emph{Step 2. Refined estimates on $\eta$.} \ We can go back to the equation for $\eta(t)=\|\psi(t)\|_{\mathcal M}$, as in \textit{Step 1}:
\[
\begin{aligned}
\ddot \eta &\ge \frac{ \int_0^1 \langle \nabla^2 U(x_s)\psi, \, \psi \rangle \, \ud s}{\|\psi\|_\mathcal{M}} \ge \int_0^1 \left(-\alpha\frac{\tilde U(\hat x_s)\|\psi\|_\mathcal{M}^2}{\|x_s\|_\mathcal{M}^{2+\alpha}\|\psi\|_\mathcal{M}} + \frac{\langle\nabla^2 \tilde U(\hat x_s)\psi, \psi\rangle}{\|x_s\|_\mathcal{M}^{2+\alpha}\|\psi\|_\mathcal{M}} + \frac{\langle \nabla\tilde U(\hat x_s), \psi\rangle \langle \hat x_s, \psi\rangle_\mathcal{M}}{\|x_s\|_{\mathcal M}^{2+\alpha}\|\psi\|_\mathcal{M}}\right) \ud s
\\
&=-\alpha\frac{\tilde U(b_m)}{\|r_0\|_\mathcal{M}^{2+\alpha}} \eta + \alpha\|\psi\|_\mathcal{M}\int_0^1 \left(\frac{\tilde U(b_m)}{\|r_0\|_\mathcal{M}^{2+\alpha}} -\frac{\tilde U(\hat x_s)}{\|x_s\|_\mathcal{M}^{2+\alpha}}\right)\,\ud s + \int_0^1 \frac{\langle\nabla^2 \tilde U(\hat x_s)\psi, \psi\rangle}{\|x_s\|_\mathcal{M}^{2+\alpha}\|\psi\|_\mathcal{M}}\,\ud s +\int_0^1 \frac{\langle \nabla\tilde U(\hat x_s), \psi\rangle \langle \hat x_s, \psi\rangle_\mathcal{M}}{\|x_s\|_\mathcal{M}^{2+\alpha}\|\psi\|_\mathcal{M}}\, \ud s
\\
&{\ge -\alpha\frac{U_{min}}{\|r_0\|_\mathcal{M}^{2+\alpha}} \eta + \alpha\|\psi\|_\mathcal{M}\int_0^1 \left(\frac{U_{min}}{\|r_0\|_\mathcal{M}^{2+\alpha}} -\frac{\tilde U(\hat x_s)}{\|r_0+s\,\psi_b\,b\|_\mathcal{M}^{2+\alpha}}\right)\,\ud s + I_2+I_3}
\\
&=-\frac{2\alpha}{(2+\alpha)^2} \frac{\eta}{t^2} + I_1+I_2+I_3,
\end{aligned}
\]
where $\psi_b:=\langle \psi(t),b_m\rangle_{\mathcal M}$.

We claim that $I_1+I_2+I_3=O(t^{-q})$, as $t \rightarrow+\infty$, , for some $q \ge \frac{4+\alpha}{2+\alpha}$.

About $I_1$,  we have
{
\[
\begin{aligned}
 \frac{U_{min}}{\|r_0\|_\mathcal{M}^{2+\alpha}}&-\frac{\tilde U(\hat x_s)}{\|r_0+s\psi_b\,b_m\|_\mathcal{M}^{2+\alpha}}
=
\frac{1}{\|r_0\|_\mathcal{M}^{2+\alpha}}
\left[
U_{min}
-
\frac{\tilde U(\hat x_s)}{
\left( 1 + s^2\frac{\psi_b^2}{\|r_0\|_\mathcal{M}^2} + 2s\psi_b \frac{\langle b_m,r_0\rangle_\mathcal{M}}{\|r_0\|_\mathcal{M}^2} \right)^{\frac{2+\alpha}{2}}}
\right]
\\[0.6em]
& \le \frac{1}{\|r_0\|_\mathcal{M}^{2+\alpha}}
\left[
U_{min}
-
\frac{\tilde U(\hat x_s)}{
\left( 1 + \frac{|\psi_b|^2}{\|r_0\|_\mathcal{M}^2} +  \frac{ 2|\psi_b|}{\|r_0\|_\mathcal{M}} \right)^{\frac{2+\alpha}{2}}}
\right]
\\[0.6em]
&=\frac{1}{\|r_0\|_\mathcal{M}^{2+\alpha}}\left[U_{min} - \tilde U(\hat x_s) \left(1 - \frac{(2+\alpha)}{2}\left(\frac{|\psi_b|^2}{\|r_0\|_\mathcal{M}^2} + \frac{2|\psi_b|}{\|r_0\|_\mathcal{M}}\right)\right)+O\left(\frac{\|\psi\|_\mathcal{M}}{\|r_0\|_\mathcal{M}}\right)\right]
\\
&=\frac{1}{\|r_0\|_\mathcal{M}^{2+\alpha}}\left[U_{min}-\left(U_{min}+s\frac{\langle \nabla^2\tilde U(b_m)(\psi-\psi_b\,b_m),\psi- \psi_b\,b_m\rangle}{2\|r_0\|_\mathcal{M}^2}\right)\left(1-(2+\alpha)\frac{|\psi_b|}{\|r_0\|_{\mathcal M}}\right)\right.
\\
&\quad \quad \quad +\left.O\left(\frac{\|\psi\|_\mathcal{M}}{\|r_0\|_\mathcal{M}}\right)\right]
\\
&=O\left(\frac{|\psi_b|}{\|r_0\|_{\mathcal M}^{3+\alpha}}\right),
\end{aligned}
\]
where we used, in particular, that
\begin{equation}\label{eq:x_hat}
\begin{aligned}
    \hat x_s &= \frac{r_0+s\,\psi}{\|r_0+s\,\psi\|_\mathcal{M}} = \frac{r_0}{\|r_0+s\,\psi\|_\mathcal{M}}+\frac{s\,\psi}{\|r_0+s\,\psi\|_\mathcal{M}}
    \\
    &=b_m+\left(\frac{b_m}{\|b_m+s\,\psi/\|r_0\|_\mathcal{M}\|_\mathcal{M}}-b_m\right) + \frac{s\,\psi}{\|r_0\|_\mathcal{M}} + O\left(\frac{\|\psi\|_\mathcal{M}^2}{\|r_0\|_\mathcal{M}^2}\right)
    \\
    &= b_m + \frac{s}{\|r_0\|_\mathcal{M}}\left(\psi-\psi_b\,b_m\right)+ O\left(\frac{\|\psi\|_\mathcal{M}^2}{\|r_0\|_\mathcal{M}^2}\right).
    \end{aligned}
\end{equation}
}
Therefore, by means of {\em Step 1}, we get
\[
\begin{aligned}
I_1&=\alpha\|\psi\|_\mathcal{M}\int_0^1\left(
\frac{U_{min}}{\|r_0\|_\mathcal{M}^{\alpha+2}}
-
\frac{\tilde U(\hat x_s)}{\|r_0+s\,\psi_b\,b_m\|_\mathcal{M}^{2+\alpha}}\right)\ud s
\\
& =
O\!\left(
\frac{|\psi_b|\|\psi\|_\mathcal{M}}{\|r_0\|_\mathcal{M}^{3+\alpha}}
\right) = O\!\left(
\frac{\|\psi\|_\mathcal{M}^2}{\|r_0\|_\mathcal{M}^{3+\alpha}}
\right) \le C\, t^{\frac{2\alpha}{2+\alpha}+\varepsilon}t^\frac{-2(3+\alpha)}{2+\alpha}= C\,t^{-\frac{6}{2+\alpha}+\varepsilon},
\end{aligned}
\]
for every $\varepsilon>0$.

About $I_2$, we make use of the following fact. For every $V \in C^4$, if $b$ is a local minimum of $V$, then there exists $c>0$ such that, for $\|y\|_\mathcal{M}$ small enough, we have
\[
\langle \nabla^2 V(b+y)\,y,\,y\rangle \ge -c \|y\|_\mathcal{M}^4.
\]

From \eqref{eq:x_hat}, denoting  $y(t)=s\frac{\psi-\psi_b\,b_m}{\|r_0\|_\mathcal{M}}$, we get
\[
\begin{aligned}
\frac{\langle\nabla^2 \tilde U(\hat x_s)\psi, \psi\rangle}{\|x_s\|_\mathcal{M}^{2+\alpha}\|\psi\|_\mathcal{M}} &= \frac{\langle \nabla^2 \tilde U(b_m+y)y,\,y\rangle}{\|x_s\|_\mathcal{M}^{\alpha}\|\psi\|_\mathcal{M}}+s\psi_b^2\frac{\langle \nabla^2 \tilde U(b_m+y) b_m, \, b_m\rangle}{\|x_s\|_\mathcal{M}^{2+\alpha}\|\psi\|_\mathcal{M}}
\\
&\ge -c \frac{\|y\|_\mathcal{M}^4}{\|r_0\|_\mathcal{M}^\alpha \|\psi\|_\mathcal{M}}-C\frac{|\psi_b|^2\,\|y\|_\mathcal{M}}{\|r_0\|_\mathcal{M}^{2+\alpha} \|\psi\|_\mathcal{M}}
\\
&= -c \frac{\|\psi\|_\mathcal{M}^3}{\|r_0\|_\mathcal{M}^{4+\alpha}}-C'\frac{|\psi_b|^2}{\|r_0\|_\mathcal{M}^{3+\alpha}}
\\
&\ge -c \frac{\|\psi\|_\mathcal{M}^3}{\|r_0\|_\mathcal{M}^{4+\alpha}}-C'\frac{\|\psi\|_\mathcal{M}^2}{\|r_0\|_\mathcal{M}^{3+\alpha}}
\\
&\ge -C''\,t^{-\frac{8-\alpha}{2+\alpha}+\varepsilon}-C''\,t^{-\frac{6}{2+\alpha}+\varepsilon}
\\
&\ge -C'''\,t^{{-\frac{8-\alpha}{2+\alpha}}+\varepsilon},
\end{aligned}
\]
for every $\varepsilon>0$. Here, we used {\em Step 1}, and the fact that, for $\|y\|_\mathcal{M}$ small enough,
\[
\langle \nabla^2 \tilde U(b_m+y)b_m,\,b_m\rangle = \int_0^1 \nabla^3 \tilde U(b_m+s\,y)[b_m,b_m,y]\,\ud s\ge -C\|y\|_\mathcal{M}\ge -C' \frac{\|\psi\|_\mathcal{M}}{\|r_0\|_\mathcal{M}}.
\]

About $I_3$, we observe that
\[
\begin{aligned}
    \langle \nabla \tilde U(b_m+y),\,\psi\rangle &= \int_0^1 \langle\nabla^2 \tilde U(b_m+s\,y)y,\,\psi-\psi_b\,b_m \rangle\,\ud s + \int_0^1\langle \nabla^2 \tilde U(b_m+s\,y)y,\,\psi_b\,b_m\rangle\,\ud s 
    \\
    & = \|r_0\|_\mathcal{M}\int_0^1 \langle\nabla^2 \tilde U(b_m+s\,y)y,\,y \rangle\,ds +\int_0^1\int_0^1 \nabla^3 \tilde U(b_m+s\,u\,y)[y,y,b_m]\,\ud s\,\ud u
    \\
    &\ge -c \|r_0\|_\mathcal{M}\|y\|_\mathcal{M}^4 -C \|y\|_\mathcal{M}^2\,|\psi_b|.
\end{aligned}
\]
Hence,
\[
\begin{aligned}
I_3 &\ge  -c \frac{\|r_0\|_\mathcal{M}\|y\|_\mathcal{M}^4}{\|r_0\|_\mathcal{M}^{2+\alpha}\|\psi\|} -C \frac{\|y\|_\mathcal{M}^2\,|\psi_b|}{\|r_0\|_\mathcal{M}^{2+\alpha}\,\|\psi\|_\mathcal{M}}
\\
&\ge -c \frac{\|\psi\|_\mathcal{M}^3}{\|r_0\|_\mathcal{M}^{5+\alpha}}-C'\frac{\|\psi\|_\mathcal{M}^2}{\|r_0\|_\mathcal{M}^{4+\alpha}}
\\
&\ge -C'' t^{-\frac{8-\alpha}{2+\alpha}+\varepsilon}.
\end{aligned}
\]
for every $\varepsilon>0$, by means again of {\em Step 1}.

\smallskip
\emph{Step 3. Conclusion.}
\ Putting together Steps 1--3, we have proved that $\eta$ satisfies
\[
\ddot \eta -\frac{2\alpha}{(2+\alpha)^2}\eta \ge f(t), 
\]
with $f(t)=O(t^{-\frac{8-\alpha}{2+\alpha}+\varepsilon})$, as $t\rightarrow+\infty$, for every $\varepsilon>0$. Hence, an application of Proposition \ref{prop:max_principle} and Remark \ref{rem:corollary} leads to  
\[
\eta(t) \le \frac{\eta(T)}{T^{\theta_-}}t^{\theta_-} + y_p(t)=O(t^{\theta_-})=O(t^\frac{\alpha}{2+\alpha}), \ \mbox{ as } \ t\rightarrow+\infty,
\]
that concludes the proof.

\end{proof}

Once obtained the asymptotic result for $\|\varphi(t)\|_\mathcal{M}$, we can give the following result for the component of $\varphi$ along $b_m$.

\begin{prop} Under the same assumptions of Proposition \ref{prop:asymptotic_par}, we have, for $t \rightarrow+\infty$,
    \begin{equation}
    \label{eq:estimate_psi_b}
|\langle\varphi(t), \, b_m\rangle_\mathcal{M}| =\begin{cases}
    O(t^{\max\{m_-, \, \frac{-2+2\alpha}{2+\alpha}\}})=o(1), \ &\mbox{ if } \ \alpha \in (0,1),
    \\
    O(t^{\frac{-2+2\alpha}{2+\alpha}}), \ &\mbox{ if } \ \alpha \in [1,2).
\end{cases}
\end{equation}
\end{prop}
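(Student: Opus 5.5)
Let $\psi=\varphi+x-r_0(1)$ and $\psi_b(t):=\langle\psi(t),b_m\rangle_{\mathcal M}$. Since $\langle\varphi,b_m\rangle_{\mathcal M}$ and $\psi_b$ differ by a constant, it suffices to estimate $\psi_b$. The approach is to project the Euler--Lagrange equation \eqref{eq:ELphi} onto the radial direction $b_m$. This produces a scalar linear ODE of the form \eqref{eq:mu_f} with a \emph{negative} coefficient $\mu$, because the radial direction is repulsive for the linearized flow around the homothetic path. We then read off the decay using Remark~\ref{rem:f_gamma}, with the forcing controlled by Proposition~\ref{prop:asymptotic_par}.

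\emph{Step 1: the projected equation.} From $\mathcal M\ddot\psi=\nabla U(r_0+\psi)-\nabla U(r_0)$ we get
\[
\ddot\psi_b=\langle \nabla^2U(r_0)\psi,b_m\rangle+\tfrac12\nabla^3U(r_0+\theta\psi)[\psi,\psi,b_m].
\]
By \eqref{eq:hessian} at $\hat x=b_m$, we have $\nabla\tilde U(b_m)=0$ on $\mathcal E$, and $\nabla^2\tilde U(b_m)b_m$ is determined by homogeneity. The term $\langle\nabla^2U(r_0)\psi,b_m\rangle$ therefore reduces to $\alpha(1+\alpha)U_{min}\psi_b/\|r_0\|_{\mathcal M}^{2+\alpha}$, because $b_m$ is an eigenvector of the Hessian of the $-\alpha$ homogeneous function $U$ with eigenvalue $\alpha(\alpha+1)U/\|\cdot\|^{2+\alpha}$. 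Using $\|r_0\|_{\mathcal M}^{2+\alpha}=\beta^{2+\alpha}t^2$ and \eqref{e:beta}, this gives
\[
\ddot\psi_b-\frac{2\alpha(1+\alpha)}{(2+\alpha)^2}\frac{\psi_b}{t^2}=f(t),
\]
which is \eqref{eq:mu_f} with $\mu=-\frac{2\alpha(1+\alpha)}{(2+\alpha)^2}<0<1/4$. The roots are $\theta_\pm$, and a direct computation gives $\theta_-=-\frac{\alpha}{2+\alpha}$ and $\theta_+=\frac{2+2\alpha}{2+\alpha}$. (I expect $m_-$ in the statement to denote this $\theta_-$, or the corresponding exponent.)

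\emph{Step 2: bound on the forcing.} The cubic remainder satisfies $|f|\lesssim\|\psi\|^2_{\mathcal M}/\|r_0\|_{\mathcal M}^{3+\alpha}$. Proposition~\ref{prop:asymptotic_par} gives $\|\psi\|=O(t^{\alpha/(2+\alpha)})$, so
\[
f=O\bigl(t^{\frac{2\alpha-6-2\alpha}{2+\alpha}}\bigr)=O(t^{-q}),\qquad q=\frac{6}{2+\alpha}.
\]

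\emph{Step 3: variation of constants.} Write $\psi_b=c_1t^{\theta_+}+c_2t^{\theta_-}+y_p$ with $y_p$ as in \eqref{eq:particular_sol}. Since $\psi_b$ grows slower than $t^{\theta_+}$ (it is $O(t^{\alpha/(2+\alpha)})$), we get $c_1=0$, after adjusting $y_p$ by choosing the lower limit $+\infty$ in the $\theta_-$-integral if it converges. By Remark~\ref{rem:f_gamma}, $y_p=O(t^{2-q})=O(t^{\frac{-2+2\alpha}{2+\alpha}})$, provided $2-q$ lies strictly between $\theta_-$ and $\theta_+$ so that both integrals behave as claimed. This yields $\psi_b=O(t^{\max\{\theta_-,(2\alpha-2)/(2+\alpha)\}})$.

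For $\alpha\in(0,1)$ both exponents are negative, so the bound is $o(1)$. For $\alpha\in[1,2)$ we have $\frac{2\alpha-2}{2+\alpha}\ge0>\theta_-$, so the particular solution dominates. The boundary case where $2-q=\theta_-$ would produce a logarithm and needs separate care.

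The main obstacle is Step 1: the exact identification of the radial eigenvalue, which requires checking that the $\nabla\tilde U\otimes\mathcal M\hat x$ and $\nabla^2\tilde U$ contributions in \eqref{eq:hessian} give no mixing into $\psi_b$ from the tangential part of $\psi$ at linear order. Since $\nabla^2U(r_0)$ is symmetric and $b_m$ is an eigenvector, $\langle\nabla^2U(r_0)\psi,b_m\rangle=\lambda\psi_b$ exactly, so this should be clean. I would verify this first via Euler's identity $\nabla^2U(x)x=-(\alpha+1)\nabla U(x)$ together with $\nabla U(b_m)=-\alpha U(b_m)\mathcal M b_m$.
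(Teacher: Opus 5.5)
Your proposal is correct and follows the paper's proof essentially step by step. You project onto $b_m$ to get $\ddot\psi_b-\frac{2\alpha(1+\alpha)}{(2+\alpha)^2}\frac{\psi_b}{t^2}=f$ with $f=O(t^{-6/(2+\alpha)})$ from Proposition~\ref{prop:asymptotic_par}, then use variation of constants with the growing mode discarded. The differences are minor: the paper reads off the radial coefficient from \eqref{eq:hessian} rather than from Euler's identity, and it kills $t^{m_+}$ via $t^{m_+}\notin\mathcal{D}^{1,2}$ rather than via your growth bound. It also does not explicitly absorb the convergent part of $t^{m_+}\int\sigma^{m_-}f$ into the homogeneous solution, as you do.

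Your explicit roots $m_-=-\frac{\alpha}{2+\alpha}$, $m_+=\frac{2+2\alpha}{2+\alpha}$ expose a real point the paper overlooks. At $\alpha=2/3$ one has $m_-=\frac{-2+2\alpha}{2+\alpha}=-\frac14$, so the term $t^{m_-}\int^t\sigma^{m_+}f\,\ud\sigma$ can be of size $t^{-1/4}\log t$. There both your argument and the paper's only give $O(t^{-1/4}\log t)$, which is still $o(1)$ but not the stated exponent.
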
 
\begin{proof} 
We set again $\psi=\varphi +x-r_0(1)$, and $\psi_b(t):=\langle \psi(t), b_m\rangle_{\mathcal M}$. By differentiating, we obtain

\[
\begin{split}
    \ddot \psi_b(t) &= \sum_{i} m_i\ddot\psi(t)b_i\\
    & = \int_{0}^{1}\langle\nabla^2 U(r_0(t) + s\psi(t))\psi(t),b_m\rangle\ \ud s\\
    & = \langle\nabla^2U(r_0(t))\psi(t),b_m\rangle + \int_{0}^{1} \langle[\nabla^2 U(r_0(t) + s\psi(t)) - \nabla^2 U(r_0(t))]\psi(t),b_m\rangle\ \ud s\\
    & = \langle\nabla^2U(r_0(t))\psi(t),b_m\rangle + \int_{0}^{1} \int_{0}^{1}\langle\nabla^3 U(r_0(t) + su\psi(t))s\psi(t),\psi(t),b_m\rangle\ \ud s\ \ud u.
\end{split}
\]
Now, by using \eqref{eq:hessian}, with $\nabla \tilde U(b_m)=0$ and $\nabla^2 \tilde U(b_m) b_m=0$, we obtain
\[
\begin{aligned}
\langle\nabla^2U(r_0(t))\psi(t),b_m\rangle &= - \alpha\frac{\tilde U(b_m)}{\|r_0\|_\mathcal{M}^{2+\alpha}}\psi_b(t) + \alpha(2+\alpha)\frac{\tilde U(b_m)}{\|r_0\|_\mathcal{M}^{2+\alpha}}\psi_b(t)
\\
&=-\frac{2\alpha}{(2+\alpha)^2}\frac{\psi_b(t)}{t^2} + \frac{2\alpha(2+\alpha)}{(2+\alpha)^2}\frac{\psi_b(t)}{t^2} 
\\
&= \frac{2\alpha(1+\alpha)}{(2+\alpha)^2}\frac{\psi_b(t)}{t^2}.
\end{aligned}
\]
Hence, $\psi_b$ satisfies
\begin{equation}
    \label{eq:psi_b}
\ddot\psi_b(t) - \frac{A}{t^2}\psi_b(t) = f(t), \ t>1,
\end{equation}
with $A=\frac{2\alpha(1+\alpha)}{(2+\alpha)^2}$ and 
\[
f(t) = \int_{0}^{1}\langle\nabla^3 U(r_0(t) + su\psi(t))s\psi(t),\psi(t),b_m\rangle\ \ud s\ \ud u,
\]
where
\[
f(t)=O\left(\frac{\|\psi\|_\mathcal{M}^2}{\|r_0(t)\|_\mathcal{M}^{3+\alpha}}\right), \ \mbox{ for } \ t \rightarrow+\infty.
\]
Hence, by using \eqref{eq:asymp}, we have
\[
f(t)=O(t^\frac{2\alpha}{2+\alpha}\,t^{-\frac{2(3+\alpha)}{2+\alpha}})=O(t^{-\frac{6}{2+\alpha}}).
\]
By applying \eqref{eq:genereal_sol} and \eqref{eq:particular_sol}, we have
\[
\psi_b(t)=c_+ t^{m_+} +c_- t^{m_-} + y_p(t),
\]
where
\[
m_\pm = \frac{1\pm \sqrt{1+4\,A}}{2},
\]
that makes $m_+>1$ and $m_-<0$,
while
\[
y_p(t)=O(t^{2-\frac{6}{2+\alpha}})=O(t^\frac{-2+2\alpha}{2+\alpha}).
\]
Since $t^{m_+} \notin \mathcal{D}^{1,2}(1,+\infty)$, then necessarily $c_+=0$, and hence \eqref{eq:estimate_psi_b} follows.
\end{proof}

\subsection{Hyperbolic-parabolic motions}

By mixing the two previous cases, namely hyperbolic and parabolic, we are ready to prove the following result.

\begin{prop}
\label{prop:asymp_HP}
Assume $\alpha \in (1/2,2)$, $x \in \mathcal X$, $0\neq a \in \Delta$, $b_m$ minimizing central configuration, and $\beta=\sqrt[2+\alpha]{\frac{(2+\alpha)^2}{\alpha}U^\alpha(b_m)}$. Set $r_0(t)=at + \beta b_m t^\frac{2}{2+\alpha}$.

Let $\gamma$ be a hyperbolic-parabolic solution of the homogeneous $N$-body problem of the form $\gamma(t) = r_0(t) + \varphi(t) + x - r_0(1)$, $\gamma(1)=x$, and where $\varphi\in\mathcal{D}_0^{1,2}(1,+\infty)$ is a minimizer of $\mathcal A_x$.

Then, it holds, as $t \rightarrow+\infty$:
\begin{equation*}
    \gamma(t)-r_0(t)=O(t^{\delta}), 
\end{equation*}
with $\delta=\max\left\{1-\alpha,\frac{\alpha}{2+\alpha}\right\}$.
    
\end{prop}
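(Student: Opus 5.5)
We split $\psi(t):=\varphi(t)+x-r_0(1)$ into the motion of the cluster centers of mass and the internal motion of each cluster. We then estimate the two parts separately, following the hyperbolic argument (Proposition~\ref{prop:asymp_H}) for the first and the parabolic argument (Proposition~\ref{prop:asymptotic_par}) for the second.

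Let $\mathcal P$ be the $a$-cluster partition. For each $K\in\mathcal P$ write $c_K(\psi)=M_K^{-1}\sum_{i\in K}m_i\psi_i$ for the center of mass of cluster $K$ and $\psi^K_i=\psi_i-c_K(\psi)$, $i\in K$, for the internal displacement. This decomposition is $\mathcal M$-orthogonal. Since $\sum_{i\in K}m_i b^K_i=0$, the reference path $r_0$ moves each cluster center linearly with velocity $a_K$, so $\ddot r_0$ has no center-of-mass component. Newton's equations \eqref{eq:NBP}, valid for $t>1$ by Proposition~\ref{th:ren_act_pr}, therefore split into two systems.

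The center equations read $M_K\ddot c_K=\sum_{i\in K,\,j\notin K}\nabla_{r_i}(\text{pair }ij)$. Only inter-cluster pairs contribute, and for these $|\gamma_{ij}|\approx|a_{ij}|t$ because $\varphi(t)=O(t^{1/2})$ by \eqref{dis_space_D012}. Hence $\ddot c_K=O(t^{-1-\alpha})$. By Remark~\ref{rem:dot_gamma_a}, $\dot\gamma\to a$, so $\dot c_K\to0$. Integrating twice from infinity, as in the proof of Proposition~\ref{prop:asymp_H}, gives $\dot c_K=O(t^{-\alpha})$ and $c_K=O(t^{1-\alpha})$ for $\alpha\in(1/2,1)$. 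For $\alpha\ge1$ the same integration gives $c_K=O(\log t)$ or $O(1)$, both of which are dominated by $t^{\alpha/(2+\alpha)}$.

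The internal equation for cluster $K$ is
\[
\ddot\psi^K=\nabla^{\mathcal M}U_K(r_0^K+\psi^K)-\nabla^{\mathcal M}U_K(r_0^K)+F_K(t),
\]
where $r_0^K=\beta^Kb^Kt^{2/(2+\alpha)}$ and $F_K$ is the internal part of the inter-cluster forces. Because the differences $\gamma_i-\gamma_j$ for $i,j\in K$ are $O(t^{2/(2+\alpha)})$, a first-order expansion of the inter-cluster forces shows that their internal part is of gradient-difference size: $F_K=O\big(t^{2/(2+\alpha)}\cdot t^{-2-\alpha}\big)=O(t^{-2-\alpha+2/(2+\alpha)})$. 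With this, we repeat Steps~1--3 of the proof of Proposition~\ref{prop:asymptotic_par} for $\eta_K=\|\psi^K\|_{\mathcal M}$, using that $b^K$ minimizes $U_K$ on the ellipsoid. We obtain
\[
\ddot\eta_K+\frac{2\alpha}{(2+\alpha)^2}\frac{\eta_K}{t^2}\ge f_K(t),
\]
where $f_K$ collects the old error terms together with $-\|F_K\|_{\mathcal M}$. Step~1, with a weak a priori bound on $\eta_K$, is carried out in the same way, since $F_K$ contributes a lower-order term. Proposition~\ref{prop:max_principle} and Remark~\ref{rem:corollary} then give $\eta_K=O(t^{\alpha/(2+\alpha)})$, provided $f_K=O(t^{-q})$ with $q\ge 2-\theta_-=\frac{4+\alpha}{2+\alpha}$. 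If the decay of $F_K$ is weaker than this, Remark~\ref{rem:f_gamma} instead gives $\eta_K=O(t^{2-q})$. Combining the two parts yields $\|\psi(t)\|_{\mathcal M}=O(t^\delta)$, with $\delta$ the larger of the two exponents.

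\textbf{Main obstacle.} The delicate step is the bound on the inter-cluster forcing $F_K$. It must decay fast enough that, after the particular-solution estimate of Remark~\ref{rem:f_gamma}, it contributes at most $O(t^{\max\{1-\alpha,\alpha/(2+\alpha)\}})$. The first thing to try is to use the crude bound $\gamma_i-\gamma_j=O(t^{2/(2+\alpha)})$ inside each cluster, Taylor-expand the forces around $a_{ij}t$, and check the exponent $q$. If it falls short, the fallback is to use the finer center-of-mass estimate $c_K=O(t^{1-\alpha})$ as the source term: the internal response to the $t^{1-\alpha}$ drift of the centers is what should produce the $t^{1-\alpha}$ contribution to $\delta$, through the parabolic Hessian term.
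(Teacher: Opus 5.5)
Your proposal is correct and is essentially the paper's proof: the cluster centers of mass satisfy $\ddot c_K=O(t^{-1-\alpha})$ and integrate to $O(t^{1-\alpha})$ (or $O(\log t)$, $O(1)$ when $\alpha\ge1$), while the internal motions are handled by rerunning the parabolic maximum-principle argument of Proposition~\ref{prop:asymptotic_par} with the inter-cluster forcing added. The check you leave open closes at once: your tidal bound $F_K=O(t^{-q})$ with $q=2+\alpha-\frac{2}{2+\alpha}>\frac32$ (the correct order, where the paper writes $O(t^{-(2+\alpha)})$) yields $\eta_K=O\big(t^{\max\{\frac{\alpha}{2+\alpha},\,\frac{2}{2+\alpha}-\alpha\}}\big)$ with $\frac{2}{2+\alpha}-\alpha<1-\alpha\le\delta$, so no fallback is needed, and the $t^{1-\alpha}$ part of $\delta$ comes from the center drift $c_K$ rather than from any internal response.
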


\begin{proof}
    The case $\alpha=1$, is covered essentially in \cite{PolimeniTerracini}. The novelty is that we can apply, to the parabolic part of the $a$-cluster decomposition, the sharp estimates \eqref{eq:asymp}.

In the case $\alpha\in (1/2,2)\setminus\{1\}$ we recall that $r_0(t)=a\,t+b_m\,\beta\,t^{\frac{2}{2+\alpha}}$, where $a \in \Delta \setminus \{0\}$, and $b_m$ is a central configuration. Also, we are studying expansive solutions of the form
\[
\gamma(t)=r_0(t)+\psi(t),
\]
with $\psi \in \mathcal{D}^{1,2}(1,+\infty)$, $\psi(t)=\varphi(t)+x-r_0(1)$, where $\varphi$ is a minimizer of $\mathcal A_x$.

As in \cite{PolimeniTerracini}, for a cluster $K$, its center of mass $c_K=\frac1{M_K}\sum_{i\in K} m_i\,\gamma_i(t)$ has a hyperbolic expansion. Indeed,
\[
c_K(t)=a_K\,t +h(t),
\]
and
\[
\ddot c_K(t)=-\frac{\alpha}{t^{1+\alpha}}\sum_{i\in K}\sum_{j\notin K}\frac{a_{ij}}{|a_{ij}|^{2+\alpha}} =O(t^{-(2+\alpha)}).
\]
Hence,
\[
h(t)=O(t^{1-\alpha}).
\]

 Consider now an index $i\in K$, and let $y_i(t)$ be the motion of the $i$-th component with respect to the center of mass of the cluster, that is
\[
y_i(t)=\gamma_i(t)-c^K(t),
\]
that is
\[
y_i(t)=\beta_m\,b_i^K\,t^\frac{2}{2+\alpha}+\psi_i(t)-h(t)
\]
We have
\[
\begin{aligned}
    \ddot y_i(t)&=-\alpha\sum_{j\in K}m_j\frac{\gamma_i(t)-\gamma_j(t)}{|\gamma_i(t)-\gamma_j(t)|^{2+\alpha}}-\alpha\sum_{j\notin K}m_j\frac{\gamma_i(t)-\gamma_j(t)}{|\gamma_i(t)-\gamma_j(t)|^{2+\alpha}}-\ddot c_i^K(t)
    \\
    &=-\alpha\sum_{j\in K}m_j\frac{y_i(t)-y_j(t)}{|y_i(t)-y_j(t)|^{2+\alpha}}+O(t^{-(2+\alpha)}).
\end{aligned}
\]
Hence, we can apply the same computations as in the parabolic case, to deduce that
\[
\psi_i(t)-h(t)=O(t^\frac{\alpha}{2+\alpha}) \ \mbox{ that is}\ \psi_i(t)=O(t^\delta) \ \mbox{ with } \ \delta=\max\left\{1-\alpha,\frac{\alpha}{2+\alpha}\right\}.
\]
This concludes the proof.
\end{proof}


\bibliographystyle{siam}
\bibliography{main}

@article{BDFT,
  author =       "A. Boscaggin and W. Dambrosio and G. Feltrin and S. Terracini",
  title =        "Parabolic orbits in Celestial Mechanics: a functional-analytic approach", 
  journal =      "Proceedings of the London Mathematical Society",
  volume =       "123",
  number =       "2",
  pages =        "203-230",
  year =         "2021"
}

@article{Burgos_PartiallyHyperbolic,
  author =       "Burgos, J. M.",
  title =        "Existence of partially hyperbolic motions in the $N$-body problem", 
  journal =      "Proceedings of the American Mathematical Society",
  volume =       "150",
  number =       "4",
  pages =        "1729–1733",
  year =         "2022"
}

@article{Chazy,
  author =       "J. Chazy",
  title =        "Sur l'allure du mouvement dans le problème des trois corps quand le temps croit indéfiniment",
  journal =      "Annales scientifiques de l'École Normale Supérieure",
  volume =       "39",
  number =       "3",
  pages =        "29-130",
  year =         "1922"
}

@article{Chenciner,
  author =       "A. Chenciner",
  title =        "Action minimizing solutions of the {N}ewtonian $n$-body problem: from homology to symmetry", 
  journal =      "Proceedings of the International Congress of Mathematicians",
  volume =       "3",
  pages =        "279-294",
  year =         "2002"
}

@article{FerrarioTerracini,
  author =       "D. L. Ferrario and S. Terracini",
  title =        "On the existence of collisionless equivariant minimizers for the classical $n$-body problem", 
  journal =      "Inventiones mathematicae",
  volume =       "155",
  number =       "2",
  pages =        "305-362",
  year =         "2004"
}

@article {BarutelloFerrarioTerracini2008,
    AUTHOR = {Barutello, Vivina and Ferrario, Davide L. and Terracini,
              Susanna},
     TITLE = {On the singularities of generalized solutions to
              {$n$}-body-type problems},
   JOURNAL = {International Mathematics Research Notices},
  FJOURNAL = {International Mathematics Research Notices. IMRN},
      YEAR = {2008},
     PAGES = {1-78}
}

@article{MadernaVenturelli_GloballyMinimizingParabolic,
  author =       "E. Maderna and A. Venturelli",
  title =        "Globally minimizing parabolic motions in the {N}ewtonian $N$-body problem", 
  journal =      "Archive for Rational Mechanics and Analysis",
  volume =       "194",
  number =       "1",
  pages =        "283-313",
  year =         "2009"
}

@article{MadernaVenturelli_HyperbolicMotions,
  author =       "E. Maderna and A. Venturelli",
  title =        "Viscosity solutions and hyperbolic motions: a new PDE method for the $N$-body problem",
  journal =      "Annals of Mathematics",
  volume =       "192",
  pages =        "499-550",
  year =         "2020"
}

@article{Marchal_MethodOfMinimization,
  author =       "C. Marchal",
  title =        "How the method of minimization of action avoids singularities", 
  journal =      "Celestial Mechanics and Dynamical Astronomy",
  volume =       "83",
  number =       "1-4",
  pages =        "325-353",
  year =         "2002"
}

@article{MarchalSaari_FinalEvolution,
  author =       "C. Marchal and D. G. Saari",
  title =        "On the final evolution of the $n$-body problem", 
  journal =      "Journal of Differential Equations",
  volume =       "20",
  number =       "1",
  pages =        "150-186",
  year =         "1976"
}

@article {MR1610784,
    AUTHOR = {Montgomery, Richard},
     TITLE = {The {$N$}-body problem, the braid group, and action-minimizing
              periodic solutions},
   JOURNAL = {Nonlinearity},
  FJOURNAL = {Nonlinearity},
    VOLUME = {11},
      YEAR = {1998},
    NUMBER = {2},
     PAGES = {363--376},
      ISSN = {0951-7715,1361-6544},
   MRCLASS = {70F10 (20F36 34C25 58F22)},
  MRNUMBER = {1610784},
MRREVIEWER = {Christopher\ K.\ McCord},
       DOI = {10.1088/0951-7715/11/2/011},
       URL = {https://doi.org/10.1088/0951-7715/11/2/011},
}

@article{paradela2022oscillatory,
      title={Oscillatory Motions in the Restricted 3-body Problem: A functional analytic approach}, 
      author={Jaime Paradela and Susanna Terracini},
      year={2022},
      eprint={2212.05684},
      journal = "preprint on arXiv database 	arXiv:2212.05684",
      archivePrefix={arXiv},
      primaryClass={math.DS}
}

@article{PolimeniTerracini,
      title= "On the existence of minimal expansive solutions to the N-body problem", 
      author="Davide Polimeni and Susanna Terracini",
      journal = "Inventiones Mathematicae",
      volume = "238", 
      pages = "585–635",
      year= "2024"
}

@article{Poincare_SolutionsPeriodiques,
  author =       "H. Poincaré",
  title =        "Sur les solutions périodiques et le principe de moindre action", 
  journal =      "Comptes rendus hebdomadaires des séances de l'Académie des sciences de Paris",
  volume =       "123",
  pages =        "915-918",
  year =         "1896"
}

@article{Pollard_BehaviorOfGravitationalSystems,
  author =       "H. Pollard",
  title =        "The behavior of gravitational systems", 
  journal =      "Journal of Mathematics and Mechanics",
  volume =       "17",
  pages =        "601-611",
  year =         "1967"
}

@article{Poincare_SurLeProblemes,
  author =       "H. Poincaré",
  title =        "Sur le problème des trois corps et les équations de la dynamique", 
  journal =      "Acta Mathematica",
  volume =       "13",
  pages =        "1-270",
  year =         "1890"
}

@article{BertiPolimeniTerracini,
  author =       "D. Berti and D. Polimeni and S. Terracini",
  title =        "On the regularity of solutions to the Hamilton-Jacobi equations for the N-body problem",
  year =      "2025",
  journal = "preprint on arXiv database arXiv:2507.1917",
  URL = {https://arxiv.org/abs/2507.19170}
}

@article{Yu_hyperbolic,
  author =       "G. Yu",
  title =        "Hyperbolic motions in the {N}-body problem with homogeneous potentials",
  year =      "2024",
  journal = "Discrete and Continuous Dynamical Systems",
  volume = "44",
  number = "12",
  pages = "3698-3708",
  DOI = "10.3934/dcds.2024074"
}

@article{LiuYanZhou,
  author =       "J. Liu and D. Yan and Y. Zhou",
  title =        "Existence of hyperbolic motions to a class of Hamiltonians and generalized {N}-body system via a geometric approach",
  year =      "2023",
  journal = "Archive for Rational Mechanics and Analysis",
  volume = "247",
  number = "4"
}

\end{document}